\newtheoremstyle{slplain}
 {.5\baselineskip\@plus.2\baselineskip\@minus.2\baselineskip}
 {.5\baselineskip\@plus.2\baselineskip\@minus.2\baselineskip}
 {\slshape}
 {}
 {\bfseries}
 {.}
 { }
 {}
\numberwithin{equation}{section}
\theoremstyle{slplain}
\newtheorem{thm}[equation]{Theorem}  
\newtheorem{cor}[equation]{Corollary}     
\newtheorem{lem}[equation]{Lemma}         
\newtheorem{prop}[equation]{Proposition}  
\theoremstyle{definition}
\newtheorem{defn}[equation]{Definition} 
\theoremstyle{remark}
\newtheorem{ex}[equation]{Example}      
\newtheorem{notation}[equation]{Notation}
\newcommand{\thmref}{Theorem~\ref}
\newcommand{\propref}{Proposition~\ref}
\newcommand{\corref}{Corollary~\ref}
\newcommand{\lemref}{Lemma~\ref}
\newcommand{\defref}{Definition~\ref}
\newcommand{\diagref}{Diagram~\ref}
\newcommand{\secref}{Section~\ref}
\newcommand*{\bs}{\boldsymbol}
\newcommand*{\bD}{\boldsymbol\Delta}
\newcommand{\cat}[1]{\mathcal{#1}}
\newcommand{\op}{^{\mathrm{op}}}
\newcommand{\LKan}{\mathbf{L}}
\newcommand{\diag}[1]{\bs{#1}}
\DeclareMathOperator{\diagon}{diag}
\DeclareMathOperator{\Tot}{Tot}
\newcommand{\real}[1]{\mathchoice
                       {\bigl|#1\bigr|}%
                       {\bigl|#1\bigr|}%
                       {|#1|}%
                       {|#1|}}
\newcommand{\cs}{\mathrm{cs}_{*}}
\newcommand{\match}{\mathrm{M}}
\newcommand{\matchcat}[2]{\mathchoice
  {\bdry\undercat{\inv{#1}}{#2}}
  {\bdry\undercat{\inv{#1}}{#2}}
  {\bdry\undercat{\inv{#1}}{#2}}
  {\bdry\undercat{\inv{#1}}{#2}}}
\newcommand{\inv}[1]{\overleftarrow{#1}}
\newcommand{\undercat}[2]{%
  {(\mathord{#2}\nonscript\,\mathord\downarrow\nonscript\,\mathord{#1})}}
\newcommand{\overcat}[2]{%
  {(\mathord{#1}\nonscript\,\mathord\downarrow\nonscript\,\mathord{#2})}}
\newcommand{\bovercat}[2]{%
  {\bigl(\mathord{#1}\nonscript\,\mathord\downarrow
             \nonscript\,\mathord{#2}\bigr)}}
\newcommand{\Top}{\mathrm{Top}}
\newcommand{\SiS}{\mathrm{SS}}
\newcommand{\B}{\mathrm{B}}
\newcommand{\suchthat}{\mid}
\DeclareMathOperator{\Ob}{Ob}
\newcommand{\bdry}{\partial}
\DeclareMathOperator*{\holim}{holim}
\DeclareMathOperator{\Map}{Map}
\DeclareMathOperator{\map}{map}
\newcommand{\we}{\cong}
\newcommand{\iso}{\approx}
\newcommand{\pullback}[3]{#1\mathbin{\mathord{\times}_{\!#2}}#3}
\newcommand{\Period}{\rlap{\enspace .}}
\newcommand{\diXhat}{\smash{\widehat{\diag X}}\vphantom{\widehat X}}
\newcommand{\obDn}{[\vec{p}]}
\newcommand{\obUC}{[k]^{n} \to \obDn}
\begin{document}

\title{The diagonal of a multicosimplicial object}

\author{Philip S. Hirschhorn}

\address{Department of Mathematics\\
   Wellesley College\\
   Wellesley, Massachusetts 02481}

\email{psh@math.mit.edu}

\urladdr{http://www-math.mit.edu/\textasciitilde psh}

\date{August 23, 2015}

\subjclass[2010]{Primary 55U35, 18G55}

\maketitle

\tableofcontents

\section{Introduction}
\label{sec:Intro}

After constructing a multicosimplicial object, it is common to pass to
its diagonal cosimplicial object.  In order for the total object of
the diagonal to have homotopy meaning, though, you need to know that
that diagonal is fibrant.

We prove in \thmref{thm:RiQuil} that the functor that takes a
multicosimplicial object in a model category to its diagonal
cosimplicial object is a right Quillen functor.  This implies that the
diagonal of a fibrant multicosimplicial object is a fibrant
cosimplicial object, which has applications to the calculus of
functors (see \cite{B-E-J-M}).  We also show in \thmref{thm:NotQE} and
\corref{cor:notqeTop} that, although the diagonal functor is a Quillen
functor, it is not a Quillen equivalence for multicosimplicial spaces.

In \secref{sec:TotOb} we discuss total objects, and show that the
total object of a multicosimplicial object is isomorphic to the total
object of the diagonal.

In \secref{sec:holim} we discuss homotopy limits, and show that the
diagonal embedding of the cosimplicial indexing category into the
multicosimplicial indexing category is homotopy left cofinal, which
implies that the homotopy limits are weakly equivalent if the
multicosimplicial object is at least objectwise fibrant.

\section{Definitions and notation}
\label{sec:Not}

\begin{notation}
  \label{not:cosimp}
  If $n$ is a nonnegative integer, we let $[n]$ denote the ordered set
  $(0, 1, 2,\ldots, n)$.  We will use $\bD$ to denote the
  \emph{cosimplicial indexing category}, which is the category with
  objects the $[n]$ for $n \ge 0$ and with morphisms $\bD\bigl([n],
  [k]\bigr)$ the weakly monotone functions $[n] \to [k]$.
\end{notation}

\begin{defn}
  \label{def:cosimp}
  If $\cat M$ is a category, a \emph{cosimplicial object in $\cat M$}
  is a functor $\bD \to \cat M$, and the \emph{category of
    cosimplicial objects in $\cat M$} is the functor category $\cat
  M^{\bD}$.  If $\diag X$ is a cosimplicial object in $\cat M$, then
  we will generally denote the value of $\diag X$ on $[k]$ as $\diag
  X^{k}$.
\end{defn}

\begin{notation}
  \label{not:multi}
  If $n$ is a positive integer, then we will let $\bD^{n}$ denote the
  product category $\underbrace{\bD \times \bD \times \cdots \times
    \bD}_{\text{$n$ times}}$.
\end{notation}

\begin{defn}
  \label{def:multicosimp}
  If $\cat M$ is a category and $n$ is a positive integer, then an
  \emph{$n$-cosimplicial object in $\cat M$} is a functor $\bD^{n} \to
  \cat M$.  If $\diag X$ is an $n$-cosimplicial object in $\cat M$,
  then we will generally denote the value of $\diag X$ on
  $\bigl([k_{1}], [k_{2}], \ldots, [k_{n}]\bigr)$ by $\diag X^{(k_{1},
    k_{2}, \ldots, k_{n})}$.
\end{defn}

\subsection{The diagonal}
\label{sec:diag}

An $n$-cosimplicial object in a category $\cat M$ is a functor $\diag
X\colon \bD^{n} \to \cat M$, and we can restrict that functor to the
``diagonal subcategory'' of $\bD^{n}$ to obtain a cosimplicial object
$\diagon\diag X$ in $\cat M$.

\begin{defn}
  \label{def:diag}
  Let $n$ be a positive integer.
  \begin{enumerate}
  \item The \emph{diagonal embedding} of the category $\bD$ into
    $\bD^{n}$ is the functor $D\colon \bD \to \bD^{n}$ that takes the
    object $[k]$ of $\bD$ to the object $\bigl(\underbrace{[k], [k],
      \ldots, [k]}_{\text{$n$ times}}\bigr)$ of $\bD^{n}$ and the
    morphism $\phi\colon [p] \to [q]$ of $\bD$ to the morphism
    $(\phi^{n})$ of $\bD^{n}$.
  \item If $\cat M$ is a category and $\diag X$ is an $n$-cosimplicial
    object in $\cat M$, then the \emph{diagonal} $\diagon\diag X$ of
    $\diag X$ is the cosimplicial object in $\cat M$ that is the
    composition $\bD \xrightarrow{D} \bD^{n} \xrightarrow{\diag X}
    \cat M$, so that $(\diagon\diag X)^{k} = \diag X^{(k, k, \ldots,
      k)}$.
\end{enumerate}

\end{defn}

\subsection{Matching objects}
\label{sec:match}

If $\cat C$ is a Reedy category (see \cite{MCATL}*{Def.~15.1.2}),
$\cat M$ is a model category, $\diag X$ is a $\cat C$-diagram in $\cat
M$, and $\alpha$ is an object of $\cat C$, then we will use the
notation of \cite{MCATL}*{Def.~15.2.5} and denote the \emph{matching
  object} of $\diag X$ at $\alpha$ by $\match_{\alpha}\diag X$, or by
$\match_{\alpha}^{\cat C}\diag X$ if the indexing category isn't
obvious.

Note that in the case of cosimplicial objects, our notation for
matching objects (see \cite{MCATL}*{Def.~15.2.5}) differs from that of
\cite{YM}*{Ch.~X, \S 4}, in that we index a matching object by the
degree at which it is the matching object, whereas \cite{YM}*{Ch.~X,
  \S 4} indexes it by one less than that.  Thus, our notation for the
matching map of a cosimplicial object $\diag X$ at $[k]$ is $\diag
X^{k} \to \match_{k}\diag X$, while the notation of \cite{YM}*{Ch.~X,
  \S 4} is $\diag X^{k} \to \match^{k-1}\diag X$.

\begin{defn}
  \label{def:Matchobj}
  Let $\cat C$ be a Reedy category, let $\cat M$ be a model category,
  let $\diag X$ be a $\cat C$-diagram in $\cat M$, and let $\alpha$ be
  an object of $\cat C$.
  \begin{enumerate}
  \item The \emph{matching category} $\matchcat{\cat C}{\alpha}$ of
    $\cat C$ at $\alpha$ is the full subcategory of
    $\undercat{\inv{\cat C}}{\alpha}$ containing all of the objects
    except the identity map of $\alpha$.
  \item The \emph{matching object} of $\diag X$ at $\alpha$ is
    $\match_{\alpha}\diag X = \lim_{\matchcat{\cat C}{\alpha}} \diag
    X$ and the \emph{matching map} of $\diag X$ at $\alpha$ is the
    natural map $\diag X_{\alpha} \to \match_{\alpha}\diag X$.  We
    will use $\match_{\alpha}^{\cat C}\diag X$ to denote the matching
    object if the indexing category isn't obvious.
  \end{enumerate}
\end{defn}

\begin{defn}[\cite{MCATL}*{Def.~15.3.3}]
  \label{def:RFib}
  Let $\cat C$ be a Reedy category and let $\cat M$ be a model
  category.  A map $f\colon \diag X \to \diag Y$ of $\cat C$-diagrams
  in $\cat M$ is a \emph{Reedy fibration} if for every object $\alpha$
  of $\cat C$ the relative matching map $\diag X_{\alpha} \to
  \pullback{\diag Y_{\alpha}}{\match_{\alpha} \diag Y}{\match_{\alpha}
    \diag X}$ is a fibration in $\cat M$.
\end{defn}

\section{The diagonal is a right Quillen functor}
\label{sec:MnThm}

We prove in \thmref{thm:RiQuil} that the functor that takes a
multicosimplicial object to its diagonal cosimplicial object is a
right Quillen functor.

\begin{defn}
  \label{def:Pdelta}
  Let $\cat M$ be a model category, let $n$ be a positive integer, and
  let $\diag X \to \diag Y$ be a Reedy fibration of $n$-cosimplicial
  objects in $\cat M$.  For every nonnegative integer $k$, the
  matching objects of $\diag X$ in $\cat M^{\bD^{n}}$ at
  $\bigl([k],[k],\ldots, [k]\bigr)$ and of $\diagon\diag X$ in $\cat
  M^{\bD}$ at $[k]$ are
  \begin{displaymath}
    \match_{([k],[k],\ldots,[k])}^{\bD^{n}} \diag X =
    \lim_{\matchcat{\bD^{n}}{([k],[k],\ldots,[k])}} \diag X
    \qquad\text{and}\qquad
    \match_{[k]}^{\bD} \diagon\diag X =
    \lim_{\matchcat{\bD}{[k]}} \diagon\diag X
  \end{displaymath}
  (with similar formulas for $\diag Y$), and we define
  $P_{k}^{\bD^{n}}$ and $P_{k}^{\bD}$ by letting the following
  diagrams be pullbacks:
  \begin{displaymath}
    \vcenter{
      \xymatrix{
        {P_{k}^{\bD^{n}}} \ar@{..>}[r] \ar@{..>}[d]
        & {\diag Y^{(k,k,\ldots,k)}} \ar[d]\\
        {\match_{([k],[k],\ldots,[k])}^{\bD^{n}} \diag X}
        \ar[r]
        & {\match_{([k],[k],\ldots,[k])}^{\bD^{n}} \diag Y}
      }
    }
    \qquad
    \vcenter{
      \xymatrix{
        {P_{k}^{\bD}} \ar@{..>}[r] \ar@{..>}[d]
        & {\diag Y^{(k,k,\ldots,k)}} \ar[d]\\
        {\match_{[k]}^{\bD} \diagon\diag X}
        \ar[r]
        & {\match_{[k]}^{\bD} \diagon\diag Y \Period}
      }
    }
  \end{displaymath}
  Thus,
  \begin{itemize}
  \item if the map $\diag X \to \diag Y$ is a Reedy fibration of
    $n$-cosimplicial objects then the natural map $\diag
    X^{(k,k,\ldots,k)} \to P_{k}^{\bD^{n}}$ is a fibration for all $k
    \ge 0$, and
  \item the map $\diagon\diag X \to \diagon\diag Y$ is a Reedy
    fibration of cosimplicial objects if and only if the natural map
    $\diag X^{(k,k,\ldots,k)} \to P_{k}^{\bD}$ is a fibration for all
    $k \ge 0$
  \end{itemize}
  (see \defref{def:RFib}).
\end{defn}

\begin{prop}
  \label{prop:matchmatch}
  Let $\cat M$ be a model category, let $n$ be a positive integer, and
  let $\diag X \to \diag Y$ be a Reedy fibration of $n$-cosimplicial
  objects in $\cat M$.  Since we are viewing $\bD$ as a subcategory of
  $\bD^{n}$, for every nonnegative integer $k$ there are natural maps
  \begin{align*}
    \lim_{\matchcat{\bD^{n}}{([k],[k],\ldots,[k])}} \diag X
    &\longrightarrow \lim_{\matchcat{\bD}{[k]}} \diagon\diag X
    \qquad\text{and}\\
    \lim_{\matchcat{\bD^{n}}{([k],[k],\ldots,[k])}} \diag Y
    &\longrightarrow \lim_{\matchcat{\bD}{[k]}} \diagon\diag Y
  \end{align*}
  and those induce a natural map
  \begin{displaymath}
    P_{k}^{\bD^{n}} \longrightarrow P_{k}^{\bD}
  \end{displaymath}
  (see \defref{def:Pdelta}).  That natural map is a fibration.
\end{prop}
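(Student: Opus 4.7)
The plan is to reduce the statement to the fibration property of a single matching-object comparison map and then prove that by a filtration of the matching category of $\bD^n$, using the Reedy fibration hypothesis one object at a time.

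Abbreviate $A_n = \match_{([k],\ldots,[k])}^{\bD^{n}} \diag X$, $B_n = \match_{([k],\ldots,[k])}^{\bD^{n}} \diag Y$, $A_1 = \match_{[k]}^{\bD} \diagon\diag X$, $B_1 = \match_{[k]}^{\bD} \diagon\diag Y$, and $Z = \diag Y^{(k,\ldots,k)}$, so that $P_{k}^{\bD^{n}} = A_n \times_{B_n} Z$ and $P_{k}^{\bD} = A_1 \times_{B_1} Z$.  Since $Z \to B_1$ factors as $Z \to B_n \to B_1$, the second pullback can be rewritten as $P_{k}^{\bD} \cong (A_1 \times_{B_1} B_n) \times_{B_n} Z$, and under this identification the map $P_{k}^{\bD^{n}} \to P_{k}^{\bD}$ is the pullback along $Z \to B_n$ of the natural comparison
\[
\varphi\colon A_n \longrightarrow A_1 \times_{B_1} B_n.
\]
Fibrations are preserved by pullback, so it suffices to show $\varphi$ is a fibration.

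To do this, I would filter the matching category $\cat E := \matchcat{\bD^{n}}{([k],\ldots,[k])}$.  Since surjections in $\bD$ are epimorphisms, the diagonal embedding identifies $\matchcat{\bD}{[k]}$ with a full subcategory $\cat E_0 \subseteq \cat E$, and $\lim_{\cat E_0}\diag X = A_1$, $\lim_{\cat E_0}\diag Y = B_1$.  Every non-identity morphism of $\cat E$ strictly decreases the Reedy degree of its target, so I would enumerate $\cat E \setminus \cat E_0$ in non-decreasing order of Reedy degree as $\beta_1,\ldots,\beta_N$ and let $\cat E_j$ be the full subcategory on $\cat E_0 \cup \{\beta_1,\ldots,\beta_j\}$.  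Writing $T_j = \lim_{\cat E_j}\diag X$ and $S_j = \lim_{\cat E_j}\diag Y$, we have $T_0 = A_1$, $S_0 = B_1$, $T_N = A_n$, $S_N = B_n$, and $\varphi$ is the map $\Phi_N\colon T_N \to T_0 \times_{S_0} S_N$.

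I would then prove by induction on $j$ that $\Phi_j\colon T_j \to T_0 \times_{S_0} S_j$ is a fibration.  The base $\Phi_0$ is the identity.  For the inductive step, the degree ordering ensures that every non-identity morphism out of $\beta_j$ in $\cat E$ lands in $\cat E_{j-1}$ and that the outgoing morphism category of $\beta_j$ in $\cat E$ is precisely $\matchcat{\bD^{n}}{\beta_j}$, giving
\[
T_j = T_{j-1} \times_{\match_{\beta_j}^{\bD^{n}} \diag X} \diag X^{\beta_j}, \qquad S_j = S_{j-1} \times_{\match_{\beta_j}^{\bD^{n}} \diag Y} \diag Y^{\beta_j}.
\]
Then $\Phi_j$ factors as $T_j \to T_{j-1} \times_{S_{j-1}} S_j \to T_0 \times_{S_0} S_j$, where the first arrow is the pullback of the relative matching map $\diag X^{\beta_j} \to \diag Y^{\beta_j} \times_{\match_{\beta_j}^{\bD^{n}} \diag Y} \match_{\beta_j}^{\bD^{n}} \diag X$ (a fibration by the Reedy hypothesis on $\diag X \to \diag Y$), and the second arrow is the pullback of $\Phi_{j-1}$ along $T_0 \times_{S_0} S_j \to T_0 \times_{S_0} S_{j-1}$ (a fibration by induction); hence $\Phi_j$ is a composite of fibrations.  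The main technical obstacle is the combinatorial bookkeeping that identifies the outgoing morphism category of $\beta_j$ in $\cat E$ with $\matchcat{\bD^{n}}{\beta_j}$, which uses the epi-ness of $(k,\ldots,k) \to \beta_j$ in $\bD^n$; once that is settled, the rest is routine pullback algebra.
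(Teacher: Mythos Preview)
Your reduction in the first paragraph to showing that $\varphi\colon A_n \to A_1 \times_{B_1} B_n$ is a fibration is correct, and is a clean way to strip off the factor $Z$; the paper instead carries $Z$ along throughout via a Reedy cube lemma, so this part of your argument is a mild simplification of the paper's approach.

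The gap is in the inductive step.  Your pullback formula
\[
T_j \;=\; T_{j-1}\times_{\match_{\beta_j}^{\bD^{n}}\diag X}\diag X^{\beta_j}
\]
is only valid when there are no non-identity morphisms in $\cat E_{j-1}$ \emph{into} $\beta_j$.  You correctly identify the outgoing morphisms (these do form $\matchcat{\bD^{n}}{\beta_j}$, using that $\beta_j$ is epi), but you have overlooked the incoming ones.  Whenever $\beta_j$ factors through a non-identity diagonal surjection $\psi^{n}$, the object $\psi^{n}\in\cat E_0\subset\cat E_{j-1}$ has strictly larger Reedy degree than $\beta_j$ and admits a morphism $\psi^{n}\to\beta_j$ in $\cat E$.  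For instance, with $n=k=2$ the object $\beta_j=(s^{0}s^{0},s^{0})\colon[2]^{2}\to([0],[1])$ has degree~$1$ but factors through the diagonal $(s^{0},s^{0})\in\cat E_0$ of degree~$2$.  The limit over $\cat E_j$ then forces the component $x_{\beta_j}$ to be the image of $x_{\psi^{n}}$, so in fact $T_j\cong T_{j-1}$; your pullback, which allows $x_{\beta_j}$ to vary freely subject only to the outgoing constraint, is strictly larger.

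The paper's proof is organized around exactly this dichotomy.  At each degree it separates the new objects into those that factor through some non-identity diagonal (the set $S_{i+1}$) and those that do not ($T_{i+1}$).  For the former, a terminal-factorization lemma (\lemref{lem:TermFact}) shows that the inclusion of subcategories is left cofinal, so adding these objects does not change the limit and no fibration argument is needed.  For the latter there are no incoming morphisms from already-added objects, your pullback formula is valid, and your relative-matching-map argument goes through.  Your filtration and your treatment of the non-factoring objects are essentially the paper's; what is missing is the cofinality argument disposing of the objects that do factor through a diagonal.
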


The proof of \propref{prop:matchmatch} is in \secref{sec:Proof}.

\begin{thm}
  \label{thm:fibrant}
  If $\cat M$ is a model category, $n$ is a positive integer, and
  $\diag X \to \diag Y$ is a Reedy fibration of $n$-cosimplicial
  objects in $\cat M$, then the induced map of diagonals
  \begin{displaymath}
    \diagon\diag X \longrightarrow \diagon\diag Y
  \end{displaymath}
  is a Reedy fibration of cosimplicial objects.
\end{thm}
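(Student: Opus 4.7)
The plan is to reduce the statement directly to \propref{prop:matchmatch} via the setup of \defref{def:Pdelta}. By the second bullet of that definition, showing that $\diagon\diag X \to \diagon\diag Y$ is a Reedy fibration of cosimplicial objects amounts to showing that for each $k \ge 0$ the natural relative matching map
\begin{displaymath}
  \diag X^{(k,k,\ldots,k)} \longrightarrow P_{k}^{\bD}
\end{displaymath}
is a fibration in $\cat M$.

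The key observation I would use is that this map factors through $P_{k}^{\bD^{n}}$. Indeed, the two pullback squares defining $P_{k}^{\bD^{n}}$ and $P_{k}^{\bD}$ in \defref{def:Pdelta} share the upper-right corner $\diag Y^{(k,k,\ldots,k)}$, and the inclusion $\bD \hookrightarrow \bD^{n}$ produces compatible restriction maps $\match_{([k],\ldots,[k])}^{\bD^{n}} \diag X \to \match_{[k]}^{\bD} \diagon\diag X$ and analogously for $\diag Y$. These data combine, by the universal property of the pullback $P_{k}^{\bD}$, to give the natural map $P_{k}^{\bD^{n}} \to P_{k}^{\bD}$ of \propref{prop:matchmatch}; and chasing the defining maps shows that the composition
\begin{displaymath}
  \diag X^{(k,k,\ldots,k)} \longrightarrow P_{k}^{\bD^{n}} \longrightarrow P_{k}^{\bD}
\end{displaymath}
agrees with the natural map into $P_{k}^{\bD}$.

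Once that factorization is in hand, both factors are fibrations: the first by the hypothesis that $\diag X \to \diag Y$ is a Reedy fibration of $n$-cosimplicial objects, together with the first bullet of \defref{def:Pdelta}; and the second by \propref{prop:matchmatch}. Since the composition of two fibrations in $\cat M$ is again a fibration, the required relative matching map is a fibration, and the theorem follows.

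The only non-routine ingredient is \propref{prop:matchmatch}, whose proof is deferred to \secref{sec:Proof}; granted that proposition, the argument above is essentially a diagram chase plus closure of fibrations under composition, so I do not anticipate further obstacles in the present theorem.
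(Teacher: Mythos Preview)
Your proposal is correct and follows essentially the same argument as the paper: factor the relative matching map as $\diag X^{(k,k,\ldots,k)} \to P_{k}^{\bD^{n}} \to P_{k}^{\bD}$, observe that the first map is a fibration by the Reedy fibration hypothesis and the second by \propref{prop:matchmatch}, and conclude by closure of fibrations under composition. Your added explanation of why the factorization exists is a welcome elaboration of what the paper leaves implicit.
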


\begin{proof}
  We must show that for every nonnegative integer $k$ the map
  \begin{displaymath}
    (\diagon\diag X)^{k} = \diag X^{(k,k,\ldots,k)} \longrightarrow
    P_{k}^{\bD}
  \end{displaymath}
  (see \defref{def:Pdelta}) is a fibration.  That map is the
  composition
  \begin{displaymath}
    \diag X^{(k,k,\ldots,k)} \longrightarrow
    P_{k}^{\bD^{n}} \longrightarrow
    P_{k}^{\bD} \Period
  \end{displaymath}
  The first of those is a fibration because the map $\diag X \to \diag
  Y$ is a Reedy fibration of $n$-cosimplicial objects, and
  \propref{prop:matchmatch} is the statement that the second is also a
  fibration.
\end{proof}

Special cases of the following corollary are already known (see
\cite{shipley}*{Lem.~7.1}, in view of
\cite{dwy-mil-neis}*{Prop.~5.8}).
\begin{cor}
  \label{cor:fibrant}
  If $\cat M$ is a model category, $n$ is a positive integer, and
  $\diag X$ is a Reedy fibrant $n$-cosimplicial object in $\cat M$,
  then the diagonal cosimplicial object $\diagon\diag X$ is Reedy
  fibrant.
\end{cor}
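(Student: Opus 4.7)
The corollary is essentially the special case of \thmref{thm:fibrant} obtained by mapping into the terminal object. My plan is to recognize that Reedy fibrancy of an object is equivalent to Reedy fibrancy of its map to a suitably chosen terminal diagram, and then to invoke the theorem directly.

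More precisely, let $*$ denote the terminal object of $\cat M$, and let $\diag{*}$ denote the constant $n$-cosimplicial object with value $*$. Since limits of constant diagrams at the terminal object are again the terminal object, for every $k \ge 0$ we have $\match_{([k],\ldots,[k])}^{\bD^{n}} \diag{*} = *$, so the relative matching map for $\diag X \to \diag{*}$ at $([k],\ldots,[k])$ collapses to the ordinary matching map $\diag X^{(k,\ldots,k)} \to \match_{([k],\ldots,[k])}^{\bD^{n}} \diag X$. Thus the assumption that $\diag X$ is Reedy fibrant is exactly the statement that the map $\diag X \to \diag{*}$ is a Reedy fibration of $n$-cosimplicial objects.

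Applying \thmref{thm:fibrant} to this map yields that $\diagon\diag X \to \diagon\diag{*}$ is a Reedy fibration of cosimplicial objects. Since $\diagon\diag{*}$ is the constant cosimplicial object at $*$, and by the same collapsing argument $\match_{[k]}^{\bD}(\diagon\diag{*}) = *$, this Reedy fibration condition reduces to the statement that each matching map $(\diagon\diag X)^{k} \to \match_{[k]}^{\bD}(\diagon\diag X)$ is a fibration. Hence $\diagon\diag X$ is Reedy fibrant, as desired.

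There is no substantive obstacle here beyond the bookkeeping identification of Reedy fibrancy with being Reedy fibrant over the terminal diagram; the entire analytic content of the statement has already been packaged into \propref{prop:matchmatch} and \thmref{thm:fibrant}.
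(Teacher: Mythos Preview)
Your proof is correct and follows exactly the same approach as the paper's own proof, which simply says the corollary follows from \thmref{thm:fibrant} by taking $\diag Y$ to be the constant $n$-cosimplicial object at the terminal object of $\cat M$. You have merely spelled out in detail the identification of Reedy fibrancy with being Reedy fibrant over the terminal diagram (note that your matching-object collapse holds at \emph{every} object of $\bD^{n}$, not just the diagonal ones, but this is the same trivial observation).
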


\begin{proof}
  This follows from \thmref{thm:fibrant} by letting $\diag Y$ be the
  constant $n$-cosimplicial object at the terminal object of $\cat M$.
\end{proof}

\begin{thm}
  \label{thm:RiQuil}
  If $\cat M$ is a model category and $n$ is a positive integer, then
  the diagonal functor $\diagon\colon \cat M^{\bD^{n}} \to \cat
  M^{\bD}$, which takes an $n$-cosimplicial object in $\cat M$ to its
  diagonal cosimplicial object, is a right Quillen functor (see
  \cite{MCATL}*{Def.~8.5.2}).
\end{thm}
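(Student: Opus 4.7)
The plan is to verify the three conditions in the definition of a right Quillen functor (see \cite{MCATL}*{Def.~8.5.2}): that $\diagon$ is a right adjoint, that it preserves fibrations, and that it preserves trivial fibrations.

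\textbf{Existence of a left adjoint.} The diagonal functor is precomposition with the functor $D\colon \bD \to \bD^{n}$ of \defref{def:diag}. Since $\bD$ and $\bD^{n}$ are small and $\cat M$ is cocomplete, restriction along $D$ has a left adjoint given by left Kan extension $\LKan_{D}\colon \cat M^{\bD} \to \cat M^{\bD^{n}}$. This is a purely formal step and presents no difficulty.

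\textbf{Preservation of fibrations.} This is exactly the content of \thmref{thm:fibrant}: the diagonal of a Reedy fibration of $n$-cosimplicial objects is a Reedy fibration of cosimplicial objects. All of the real work has already been done in \propref{prop:matchmatch} and \thmref{thm:fibrant}, so at this stage I just need to cite them.

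\textbf{Preservation of trivial fibrations.} A trivial Reedy fibration is a Reedy fibration that is also a weak equivalence, and weak equivalences in the Reedy model structure on $\cat M^{\cat C}$ (for any Reedy category $\cat C$) are the objectwise weak equivalences in $\cat M$. Thus, if $\diag X \to \diag Y$ is a trivial Reedy fibration of $n$-cosimplicial objects, then for every multi-index $(k_{1},\ldots,k_{n})$ the map $\diag X^{(k_{1},\ldots,k_{n})} \to \diag Y^{(k_{1},\ldots,k_{n})}$ is a weak equivalence in $\cat M$; specializing to $k_{1} = \cdots = k_{n} = k$ shows that $\diagon\diag X \to \diagon\diag Y$ is an objectwise weak equivalence, hence a Reedy weak equivalence of cosimplicial objects. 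Combined with \thmref{thm:fibrant}, this gives that the diagonal is a trivial Reedy fibration.

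\textbf{Main obstacle.} There is essentially no obstacle remaining here: the technical content lives in \propref{prop:matchmatch} (whose proof is deferred to \secref{sec:Proof}) and in \thmref{thm:fibrant}, which is already proved. The preservation of trivial fibrations might at first seem to require an analogue of \propref{prop:matchmatch} for trivial fibrations, but the observation that Reedy weak equivalences are objectwise weak equivalences — and that the diagonal is simply restriction along $D$ — bypasses this concern and reduces the statement to a combination of \thmref{thm:fibrant} with a trivial diagram chase.
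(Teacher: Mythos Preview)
Your proof is correct and follows essentially the same route as the paper: left adjoint via left Kan extension along $D$, preservation of fibrations by \thmref{thm:fibrant}, and preservation of trivial fibrations because Reedy weak equivalences are objectwise so that $\diagon$ preserves all weak equivalences. The only cosmetic difference is that the paper cites \cite{MCATL}*{Prop.~8.5.3} to reduce to checking preservation of fibrations and trivial fibrations, whereas you list the right-Quillen conditions directly.
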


\begin{proof}
  Since a model category is cocomplete, the left Kan extension of a
  cosimplicial object along the diagonal inclusion $\bD \to \bD^{n}$
  always exists (see \cite{borceux-I}*{Thm.~3.7.2} or
  \cite{schubert}*{Thm.~17.1.6} or, for the dual statement,
  \cite{McL:categories}*{Cor.~X.3.2}), and so the diagonal functor has
  a left adjoint.  Thus, we need only show that the diagonal functor
  preserves both fibrations and trivial fibrations (see
  \cite{MCATL}*{Prop.~8.5.3}).

  \thmref{thm:fibrant} implies that that the diagonal preserves
  fibrations.  Since the weak equivalences of cosimplicial objects and
  of $n$-cosimplicial objects are defined degreewise, the restriction
  of an $n$-cosimplicial object to its diagonal preserves \emph{all}
  weak equivalences, and so it also preserves trivial fibrations.
\end{proof}

\subsection{The multicosimplicial product of standard simplices}
\label{sec:CosSS}

The main result of this section is \thmref{thm:LeftKan}, which we will
use in the following section to show that the right Quillen functor of
\thmref{thm:RiQuil} is not a Quillen equivalence.  It will also be
used in \thmref{thm:IsoTot} to show that the total object of a
multicosimplicial object is isomorphic to the total object of its
diagonal cosimplicial object.

\begin{defn}
  \label{def:MultSS}
  If $F\colon \cat A \to \cat B$ is a functor between small categories
  and $X$ is an object of $\cat B$, then $\cat B(F-,X)$ is the $\cat
  A\op$-diagram of sets that on an object $W$ of $\cat A$ is the set
  $\cat B(FW, X)$.  This is natural in $X$, and thus defines a $\cat
  B$-diagram of $\cat A\op$-diagrams of sets.

  If $n$ is a positive integer and $F\colon \cat A \to \cat B$ is the
  diagonal embedding $D\colon \bD \to \bD^{n}$ (see
  \defref{def:diag}), then an $\cat A\op$-diagram of sets is a
  $\bD\op$-diagram of sets, i.e., a simplicial set, and so this
  defines a $\bD^{n}$-diagram of simplicial sets, i.e., an
  $n$-cosimplicial simplicial set, which we will denote by
  $\Delta^{(n)}$.  If $\bigl([p_{1}], [p_{2}], \ldots, [p_{n}]\bigr)$
  is an object of $\bD^{n}$, then the simplicial set $\bD^{n}\bigl(D-,
  ([p_{1}], [p_{2}], \ldots, [p_{n}])\bigr)$ has as its $k$-simplices
  the $n$-tuples of maps
  \begin{displaymath}
    (\alpha_{1}, \alpha_{2}, \ldots, \alpha_{n})\colon
    \bigl([k], [k], \ldots, [k]\bigr) \longrightarrow
    \bigl([p_{1}], [p_{2}], \ldots, [p_{n}]\bigr)
  \end{displaymath}
  where each $\alpha_{i}\colon [k] \to [p_{i}]$ is a weakly monotone
  map.  Thus, each $k$-simplex is the product for $1 \le i \le n$ of a
  $k$-simplex of $\Delta[p_{i}]$, i.e., a $k$-simplex of
  $\Delta[p_{1}] \times \Delta[p_{2}] \times \cdots \times
  \Delta[p_{n}]$, and so $\Delta^{(n)}\bigl([p_{1}], [p_{2}], \ldots,
  [p_{n}]\bigr)$ is the product of standard simplices $\Delta[p_{1}]
  \times \Delta[p_{2}] \times \cdots \times \Delta[p_{n}]$.  That is,
  $\Delta^{(n)}\colon \bD^{n} \to \SiS$ is an $n$-cosimplicial
  simplicial set whose value on the object $\bigl([p_{1}], [p_{2}],
  \ldots, [p_{n}]\bigr)$ is $\Delta[p_{1}] \times \Delta[p_{2}] \times
  \cdots \times \Delta[p_{n}]$.  We will call it the
  \emph{$n$-cosimplicial product of standard simplices}.

  If $n = 1$, so that $F\colon \cat A \to \cat B$ is the identity
  functor of $\bD$, then this defines a cosimplicial object in the
  category of simplicial sets, i.e., a cosimplicial simplicial set,
  which we will denote by $\Delta$.  If $k$ is a nonnegative integer,
  then for a nonnegative integer $i$ the $i$-simplices of $\bD(-,
  [k])$ are the maps $\bD([i],[k])$, i.e., the weakly monotone
  functions $[i] \to [k]$, and so the simplicial set $\bD(-,[k])$ is
  the \emph{standard $k$-simplex}, which we will denote by
  $\Delta[k]$.  That is, $\Delta\colon \bD \to \SiS$ is a cosimplicial
  simplicial set, and its value on the object $[k]$ is $\Delta[k]$.
  We will call it the \emph{cosimplicial standard simplex}.
\end{defn}

\begin{lem}
  \label{lem:prodsimpcof}
  If $n$ is a positive integer, then the $n$-cosimplicial product of
  standard simplices $\Delta^{(n)}$ (see \defref{def:MultSS}) is a
  Reedy cofibrant (see \cite{MCATL}*{Def.~15.3.3}) $n$-cosimplicial
  simplicial set.
\end{lem}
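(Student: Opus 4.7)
The plan is to prove that the latching map of $\Delta^{(n)}$ at each object $\bigl([p_{1}], \ldots, [p_{n}]\bigr)$ of $\bD^{n}$ is a monomorphism of simplicial sets, hence a cofibration in the standard model structure on $\SiS$. The approach is to identify this latching map with the iterated pushout-product of the boundary inclusions $\bdry\Delta[p_{i}] \hookrightarrow \Delta[p_{i}]$, and then invoke closure of monomorphisms under pushout-product in $\SiS$.

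First, I recall the classical fact that the cosimplicial standard simplex $\Delta\colon \bD \to \SiS$ is Reedy cofibrant: its latching object at $[p]$ is $\bdry\Delta[p]$, with latching map the boundary inclusion, which is a monomorphism. Since by construction $\Delta^{(n)}\bigl([p_{1}], \ldots, [p_{n}]\bigr) = \Delta[p_{1}] \times \cdots \times \Delta[p_{n}]$, the object $\Delta^{(n)}$ is the external product of $n$ copies of $\Delta$, and we expect its latching data to decompose multiplicatively.

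Next, I identify the latching category of $\bD^{n}$ at $\bigl([p_{1}], \ldots, [p_{n}]\bigr)$ for the product Reedy structure: its objects are tuples $(f_{1}, \ldots, f_{n})$ of face maps $f_{i}\colon [q_{i}] \to [p_{i}]$ such that at least one $f_{i}$ is not an identity. This category is naturally the product of the ``latching-plus-identity'' categories for each factor, with the single tuple of identities deleted. Since the value of $\Delta^{(n)}$ on $(f_{1}, \ldots, f_{n})$ is the Cartesian product of the $\Delta[q_{i}]$, and Cartesian product in $\SiS$ commutes with colimits in each variable, the colimit over this product category is
\begin{displaymath}
  \bigcup_{i=1}^{n} \Delta[p_{1}] \times \cdots \times \bdry\Delta[p_{i}] \times \cdots \times \Delta[p_{n}] \subseteq \Delta[p_{1}] \times \cdots \times \Delta[p_{n}],
\end{displaymath}
and the latching map of $\Delta^{(n)}$ at $\bigl([p_{1}], \ldots, [p_{n}]\bigr)$ is the displayed inclusion, which is exactly the iterated pushout-product of the $\bdry\Delta[p_{i}] \hookrightarrow \Delta[p_{i}]$ landing in $\Delta[p_{1}] \times \cdots \times \Delta[p_{n}]$.

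Finally, each $\bdry\Delta[p_{i}] \hookrightarrow \Delta[p_{i}]$ is a monomorphism, and monomorphisms in $\SiS$ are closed under pushout-product (a standard feature of the Cartesian monoidal model structure on simplicial sets), so the latching map of $\Delta^{(n)}$ at every object is a cofibration, giving Reedy cofibrancy. The main obstacle is the identification in the second paragraph: carefully verifying that the latching category of the product Reedy structure on $\bD^{n}$ factors as a product (minus the terminal object) of the individual augmented latching categories, and that the colimit over that product decomposition commutes with the external Cartesian product of the values. This amounts to a Fubini-type argument in $\SiS$, which is routine but must be done with care to ensure the identification of the colimit as a genuine subobject of $\Delta[p_{1}] \times \cdots \times \Delta[p_{n}]$.
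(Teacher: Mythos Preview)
Your proposal is correct and follows essentially the same approach as the paper: both identify the latching map of $\Delta^{(n)}$ at $\bigl([p_{1}], \ldots, [p_{n}]\bigr)$ with the inclusion of the boundary of $\Delta[p_{1}] \times \cdots \times \Delta[p_{n}]$ and observe that this is a cofibration. The paper states this identification in a single sentence without further justification, whereas you supply the supporting Fubini-type analysis of the product latching category and the pushout-product argument; this extra detail is welcome but not a different route.
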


\begin{proof}
  The latching map at the object $\bigl([p_{1}], [p_{2}], \ldots
  [p_{n}]\bigr)$ of $\bD^{n}$ is the inclusion of the boundary of
  $\Delta[p_{1}]\times \Delta[p_{2}] \times \cdots \times
  \Delta[p_{n}]$, and is thus a cofibration.
\end{proof}

\begin{lem}
  \label{lem:ColimCtSmp}
  If $K$ is a simplicial set and $\Delta K$ is the category of
  simplices of $K$ (which has as objects the simplices of $K$ and as
  morphisms from $\sigma$ to $\tau$ the simplicial operators that take
  $\tau$ to $\sigma$; see \cite{MCATL}*{Def.~15.1.16}), then $K$ is
  naturally isomorphic to the colimit of the $\Delta K$-diagram of
  simplicial sets that
  \begin{itemize}
  \item takes a $k$-simplex of $K$ to the standard $k$-simplex
    $\Delta[k]$,
  \item when $\bdry_{i}(\tau) = \sigma$, takes $\bdry_{i}$ to the
    inclusion of the image of $\sigma$ as the $i$'th face of the image
    of $\tau$, and
  \item when $s^{i}(\tau) = \sigma$, takes $s^{i}$ to the collapse of
    the image of $\sigma$ to the image of $\tau$ that identifies
    vertices $i$ and $i+1$
  \end{itemize}
  under an isomorphism that for a $k$-simplex $\sigma$ takes the
  nondegenerate $k$-simplex of $\Delta[k]$ to $\sigma$.
\end{lem}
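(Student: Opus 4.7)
The plan is to recognize this as the density theorem for presheaves applied to $\bD$: every simplicial set is canonically the colimit of its representable sub-simplices, indexed by its category of elements, which is precisely $\Delta K$. I would give a direct construction of the isomorphism rather than cite the general fact, so that the three compatibility bullets fall out transparently.

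First, for each $k$-simplex $\sigma$ of $K$, the Yoneda lemma produces the unique map of simplicial sets $\hat\sigma\colon\Delta[k]\to K$ that sends the fundamental $k$-simplex of $\Delta[k]$ (the identity $[k]\to[k]$) to $\sigma$. I would check that the family $\{\hat\sigma\}$ forms a cocone under the $\Delta K$-diagram: a morphism $\sigma\to\tau$ in $\Delta K$ is a simplicial operator $\gamma\colon[m]\to[k]$ with $\gamma^{*}\tau=\sigma$, and the equation $\hat\tau\circ\gamma_{*}=\hat\sigma$ holds because both sides send the fundamental $m$-simplex to $\gamma^{*}\tau=\sigma$. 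This produces the candidate natural map $\Phi\colon\colim_{\Delta K}\Delta[-]\to K$, and the bullets describing the action on $\bdry_{i}$ and on degeneracies are read off directly from the construction of $\hat\sigma$.

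It then remains to verify that $\Phi$ is a bijection in each simplicial degree $n$. An $n$-simplex of $\colim_{\Delta K}\Delta[-]$ is a class of pairs $(\sigma,\alpha)$ with $\sigma\in K_{m}$ and $\alpha\in\bD\bigl([n],[m]\bigr)$ under the relation generated by $(\gamma^{*}\tau,\alpha)\sim(\tau,\gamma\alpha)$, and $\Phi$ sends such a class to $\alpha^{*}\sigma\in K_{n}$. The key observation is that every class has the canonical representative $(\alpha^{*}\sigma,\mathrm{id}_{[n]})$, obtained via the morphism $\alpha\colon\alpha^{*}\sigma\to\sigma$ in $\Delta K$; since two such canonical representatives $(\tau,\mathrm{id})$ and $(\tau',\mathrm{id})$ map to $\tau$ and $\tau'$ respectively, $\Phi$ is bijective in degree $n$. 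Naturality in $K$ is automatic from the naturality of the Yoneda assignment $\sigma\mapsto\hat\sigma$. The only real obstacle is bookkeeping: the morphisms in $\Delta K$ point from $\sigma$ to $\tau$ exactly when some simplicial operator takes $\tau$ to $\sigma$, so one must be consistent about how these contravariantly-indexed simplices produce covariant maps $\gamma_{*}\colon\Delta[m]\to\Delta[k]$ in the diagram, and about the corresponding equivalence relation on colimit representatives.
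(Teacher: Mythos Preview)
Your argument is correct: this is exactly the density theorem for the presheaf category $\mathrm{Set}^{\bD\op}$, and your direct verification via canonical representatives $(\alpha^{*}\sigma,\mathrm{id}_{[n]})$ is the standard way to see it. The cocone check and the bijection in each degree both go through as you describe, and your caution about the variance conventions in $\Delta K$ is well placed.

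The paper itself does not prove this lemma at all: its proof is a one-line citation to \cite{MCATL}*{Prop.~15.1.20}. So your approach is not so much different as simply more self-contained; you are supplying the argument that the paper outsources. The trade-off is the obvious one: the citation keeps the paper short and points to a carefully stated version with all the bookkeeping done, while your write-up makes the lemma independent of that reference and exposes exactly why the three bullet points in the statement hold.
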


\begin{proof}
  See \cite{MCATL}*{Prop.~15.1.20}.
\end{proof}

The following theorem is the degree $0$ part of
\cite{dwy-mil-neis}*{Remark on p.~172} and
\cite{shipley}*{Prop.~8.1}.  The full statement, for a general model
category, is \thmref{thm:IsoTot}.

\begin{thm}
  \label{thm:LeftKan}
  If $n$ is a positive integer, then the left Kan extension of the
  cosimplicial standard simplex $\Delta$ (see \defref{def:MultSS})
  along the diagonal embedding $\bD \to \bD^{n}$ (see
  \defref{def:diag}) is the $n$-cosimplicial product of standard
  simplices $\Delta^{(n)}$ (see \defref{def:MultSS}) with the natural
  transformation $\alpha\colon \Delta \to \diagon\Delta^{(n)}$ that on
  the object $[k]$ of $\bD$ is the diagonal map $\Delta[k] \to
  \Delta[k]\times \Delta[k]\times \cdots \times \Delta[k]$.  Thus, for
  every $n$-cosimplicial simplicial set $\diag X$ there is a natural
  isomorphism of sets
  \begin{displaymath}
    \SiS^{\bD^{n}}(\Delta^{(n)}, \diag X) \iso
    \SiS^{\bD}(\Delta, \diagon\diag X) 
  \end{displaymath}
  between the set of maps of $n$-cosimplicial simplicial sets
  $\Delta^{(n)} \to \diag X$ and the set of maps of cosimplicial
  simplicial sets $\Delta \to \diagon\diag X$ (see \defref{def:diag}).
  That isomorphism takes a map $f\colon \Delta^{(n)} \to \diag X$ in
  $\SiS^{\bD^{n}}$ to the composition $\Delta \xrightarrow{\alpha}
  \diagon\Delta^{(n)} \xrightarrow{\diagon f} \diagon\diag X$ in
  $\SiS^{\bD}$ (see \cite{borceux-I}*{Def.~3.7.1}).
\end{thm}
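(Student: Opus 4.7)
The plan is to compute the left Kan extension using the pointwise colimit formula and then to recognize the resulting colimit via \lemref{lem:ColimCtSmp}. By the pointwise formula (see \cite{McL:categories}*{Thm.~X.3.1}), for each object $\bigl([p_{1}], \ldots, [p_{n}]\bigr)$ of $\bD^{n}$ the value of the left Kan extension $\LKan_{D}\Delta$ at that object is the colimit over the comma category $\bovercat{D}{([p_{1}],\ldots,[p_{n}])}$ of the functor that sends an object $\bigl([k], (\alpha_{1},\ldots,\alpha_{n})\bigr)$ (with each $\alpha_{i}\colon [k] \to [p_{i}]$) to $\Delta[k]$.

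The first and main step will be to identify this comma category with the category of simplices of the product simplicial set $\Delta[p_{1}] \times \cdots \times \Delta[p_{n}]$: an object $\bigl([k], (\alpha_{1},\ldots,\alpha_{n})\bigr)$ of the comma category is exactly a $k$-simplex of the product, and a morphism in the comma category, i.e., a $\phi\colon [k] \to [m]$ with $\beta_{i}\circ\phi = \alpha_{i}$ for each $i$, is precisely a simplicial operator that carries the $m$-simplex $(\beta_{1},\ldots,\beta_{n})$ to the $k$-simplex $(\alpha_{1},\ldots,\alpha_{n})$. Under this identification the diagram whose colimit computes the Kan extension matches, term by term and arrow by arrow, the diagram in \lemref{lem:ColimCtSmp}. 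That lemma then yields a canonical isomorphism from the colimit to $\Delta[p_{1}] \times \cdots \times \Delta[p_{n}] = \Delta^{(n)}\bigl([p_{1}],\ldots,[p_{n}]\bigr)$; assembled naturally in $\bigl([p_{1}],\ldots,[p_{n}]\bigr)$, this identifies $\LKan_{D}\Delta$ with $\Delta^{(n)}$.

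To finish, I will verify that the unit $\alpha\colon \Delta \to \diagon\Delta^{(n)}$ is the diagonal. At $[k]$, the component $\alpha_{[k]}$ is the colimit inclusion of $\Delta[k]$ associated to the object $\bigl([k], (\mathrm{id}_{[k]}, \ldots, \mathrm{id}_{[k]})\bigr)$ of $\bovercat{D}{D[k]}$, and by the explicit form of the isomorphism in \lemref{lem:ColimCtSmp} (which carries the nondegenerate $k$-simplex of $\Delta[k]$ to the simplex indexing the object) this is precisely the map $\Delta[k] \to \Delta[k] \times \cdots \times \Delta[k]$ whose value on the nondegenerate $k$-simplex is the diagonal simplex $(\mathrm{id}_{[k]}, \ldots, \mathrm{id}_{[k]})$, i.e., the diagonal map. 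The stated Hom-set bijection, together with the formula for it as the composition with $\alpha$ after restriction along $D$, is then simply the defining adjunction $\LKan_{D} \dashv \diagon$ for the Kan extension in its unit-based form. I expect the main bookkeeping burden to be in the first step, where the comma category and category-of-simplices must be matched carefully at the level of morphisms, including the direction in which simplicial operators act on $n$-tuples of monotone maps; the other steps then follow directly from \lemref{lem:ColimCtSmp} and the universal property of Kan extensions.
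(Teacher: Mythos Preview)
Your proposal is correct and follows essentially the same route as the paper's proof: compute the left Kan extension pointwise, identify the comma category $\bovercat{D}{([p_{1}],\ldots,[p_{n}])}$ with the category of simplices of $\Delta[p_{1}]\times\cdots\times\Delta[p_{n}]$, apply \lemref{lem:ColimCtSmp}, and then track the nondegenerate $k$-simplex to identify the unit $\alpha$ as the diagonal map. You are slightly more explicit than the paper about matching morphisms in the comma-category identification, but the argument is the same.
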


\begin{proof}
  Since the category of simplicial sets is cocomplete, the left Kan
  extension $\LKan\Delta$ of $\Delta$ exists and can be constructed
  pointwise (see \cite{borceux-I}*{Thm.~3.7.2}).  We view $\bD$ as the
  diagonal subcategory of $\bD^{n}$, and so for each object
  $\bigl([p_{1}], [p_{2}], \ldots, [p_{n}]\bigr)$ of $\bD^{n}$, the
  simplicial set $\LKan\Delta\bigl([p_{1}], [p_{2}], \ldots,
  [p_{n}]\bigr)$ is the colimit of the $\bovercat{\bD}{([p_{1}],
    [p_{2}], \ldots, [p_{n}])}$-diagram of simplicial sets that takes
  the object
  \begin{displaymath}
    (\alpha_{1}, \alpha_{2}, \ldots, \alpha_{n}) \colon
    \bigl([k], [k], \ldots, [k]\bigr) \longrightarrow
    \bigl([p_{1}], [p_{2}], \ldots, [p_{n}]\bigr)
  \end{displaymath}
  of $\bovercat{\bD}{([p_{1}], [p_{2}], \ldots, [p_{n}])}$ to the
  standard $k$-simplex $\Delta[k]$.  That object is the product for $1
  \le i \le n$ of morphisms $\alpha_{i}\colon [k] \to [p_{i}]$ in
  $\bD$, i.e., the product for $1 \le i \le n$ of a $k$-simplex of
  $\Delta[p_{i}]$, i.e., a $k$-simplex of $\Delta[p_{1}] \times
  \Delta[p_{2}] \times \cdots \times \Delta[p_{n}]$.

  Thus, $\LKan\Delta\bigl([p_{1}], [p_{2}], \ldots, [p_{n}]\bigr)$ is
  the colimit of the diagram indexed by the category of simplices of
  $\Delta[p_{1}] \times \Delta[p_{2}] \times \cdots \times
  \Delta[p_{n}]$ (see \lemref{lem:ColimCtSmp}) that takes each
  $k$-simplex of $\Delta[p_{1}] \times \Delta[p_{2}] \times \cdots
  \times \Delta[p_{n}]$ to the standard $k$-simplex $\Delta[k]$, and
  so \lemref{lem:ColimCtSmp} implies that $\LKan\Delta\bigl([p_{1}],
  [p_{2}], \ldots, [p_{n}]\bigr) \iso \Delta[p_{1}] \times
  \Delta[p_{2}] \times \cdots \times \Delta[p_{n}]$.

  For the natural transformation $\alpha$, note that a map of
  simplicial sets with domain $\Delta[k]$ is entirely determined by
  what it does to the nondegenerate $k$-simplex of $\Delta[k]$, which
  is the identity map of $[k]$.  The natural transformation
  $\alpha\colon \Delta \to \diagon\Delta^{(n)}$ on the object $[k]$ of
  $\bD$ takes that nondegenerate $k$-simplex of $\Delta[k]$ to the
  $k$-simplex of $\Delta[k]\times \Delta[k]\times \cdots \times
  \Delta[k]$ that is the image under the diagonal embedding of the
  identity map of $[k]$, which is the map $[k] \to \bigl([k], [k],
  \ldots, [k]\bigr)$ whose projection onto each factor is the identity
  map of $[k]$, i.e., the product of the nondegenerate $k$-simplices
  of each factor.
\end{proof}

\subsection{Quillen functors, but not Quillen equivalences}
\label{sec:NotEq}

Let $\cat M$ be a model category.  We show in \thmref{thm:NotQE} and
\corref{cor:notqeTop} that the right Quillen functor $\diagon\colon
\cat M^{\bD^{n}} \to \cat M^{\bD}$ (see \thmref{thm:RiQuil}) and its
left adjoint $\LKan\colon \cat M^{\bD} \to \cat M^{\bD^{n}}$ are not
Quillen equivalences when $\cat M$ is either the model category of
simplicial sets or the model category of topological spaces (see
\cite{MCATL}*{Def.~8.5.20}).

\begin{thm}
  \label{thm:NotQE}
  If $\cat M = \SiS$, the model category of simplicial sets, and $n
  \ge 2$, then the Quillen functors $\diagon\colon \cat M^{\bD^{n}}
  \to \cat M^{\bD}$ and its left adjoint $\LKan\colon \cat M^{\bD} \to
  \cat M^{\bD^{n}}$ are not Quillen equivalences.
\end{thm}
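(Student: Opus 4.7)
I use the standard criterion (see \cite{MCATL}*{Prop.~8.5.23}) that a Quillen pair is a Quillen equivalence if and only if, for every cofibrant $\diag A$, the derived unit $\diag A \to \diagon \bigl(R \LKan \diag A\bigr)$ is a weak equivalence, where $R$ denotes a Reedy fibrant replacement in $\SiS^{\bD^{n}}$.  I will exhibit a single Reedy cofibrant cosimplicial simplicial set $\diag A$ for which this fails; in fact, my $\diag A$ will be chosen so that $\LKan \diag A$ is already Reedy fibrant and no replacement is necessary.

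\textbf{The test object and its Kan extension.}  Take $\diag A$ to be the cosimplicial simplicial set with $\diag A^{k} = [k]$ viewed as a discrete simplicial set (the set of vertices of $\Delta[k]$), with structure $\diag A(\phi) = \phi$ for $\phi\colon [k] \to [\ell]$ in $\bD$.  This is Reedy cofibrant: the latching object at $[k]$ is the colimit of the inclusions $[\ell] \hookrightarrow [k]$ with $\ell < k$, which equals $[k]$ because every vertex lies in the image of some $[0] \hookrightarrow [k]$, so the latching map is the identity, hence a cofibration.  By the pointwise left Kan extension formula, used exactly as in the proof of \thmref{thm:LeftKan}, the value $(\LKan \diag A)^{([p_{1}], \ldots, [p_{n}])}$ is the colimit over $\bovercat{\bD}{([p_{1}], \ldots, [p_{n}])}$ of $\diag A^{k} = [k]$.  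Sending the class of $(i, (\alpha_{1}, \ldots, \alpha_{n})\colon D[k] \to ([p_{1}], \ldots, [p_{n}]))$ to $(\alpha_{1}(i), \ldots, \alpha_{n}(i))$ gives a bijection of this colimit with $[p_{1}] \times \cdots \times [p_{n}]$: surjectivity uses the constant objects $[0] \to ([p_{1}], \ldots, [p_{n}])$, and injectivity is witnessed by funneling any representative through such a constant object.  In particular, $\LKan \diag A$ is objectwise a discrete simplicial set.

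\textbf{Failure of the derived unit.}  Any map between discrete simplicial sets is a Kan fibration, since every horn $\Lambda^{n}_{k}$ with $n \ge 1$ is connected, so the required lift is forced to be constant; and limits of discrete simplicial sets in $\SiS$ are again discrete.  Hence all matching maps of $\LKan \diag A$ are fibrations, so $\LKan \diag A$ is already Reedy fibrant and no fibrant replacement is needed.  The derived unit at $\diag A$ is then simply the unit $\diag A \to \diagon \LKan \diag A$, which at $[k]$ is the diagonal inclusion $[k] \to [k] \times \cdots \times [k]$ into the $n$-fold cartesian product.  For $n \ge 2$ and $k \ge 1$ the element $(0, 1, 0, \ldots, 0)$ is not in the image, so this map is not a bijection; since both sides are discrete Kan complexes, it is not a weak equivalence.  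This contradicts the criterion, establishing that $(\LKan, \diagon)$ is not a Quillen equivalence.

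The main obstacle is the identification of $\LKan \diag A$ with the ``vertex-product'' functor $([p_{1}], \ldots, [p_{n}]) \mapsto [p_{1}] \times \cdots \times [p_{n}]$ via the colimit calculation above; once that is in hand, the objectwise discreteness makes the rest of the argument immediate.
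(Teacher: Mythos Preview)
Your proof is correct. You use the same test object as the paper---the degreewise $0$-skeleton $\diag A$ of the cosimplicial standard simplex---and compute $\LKan\diag A$ the same way (as the degreewise $0$-skeleton of $\Delta^{(n)}$), but you then diverge in how you extract the contradiction. The paper works on the \emph{counit} side: it builds a Reedy fibrant $n$-cosimplicial target $\diag Y$ by making $\diag A$ constant in the extra indices and then taking a fibrant approximation, exhibits a weak equivalence $\diag A \to \diagon\diag Y$, and checks that the adjoint map $\LKan\diag A \to \diag Y$ fails at bidegree $(1,1)$ by a component count ($4$ versus $2$). You instead use the derived-\emph{unit} criterion and make the pleasant observation that $\LKan\diag A$, being objectwise discrete, is already Reedy fibrant; hence the derived unit is the plain unit $[k] \hookrightarrow [k]^{n}$, visibly not a weak equivalence for $n\ge 2$ and $k\ge 1$. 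Your route is shorter because it sidesteps the fibrant-approximation step entirely. One small imprecision: your latching argument for Reedy cofibrancy of $\diag A$ tacitly assumes $k\ge 1$ (at $k=0$ the latching object is empty, not $[0]$, so the latching map is $\emptyset \hookrightarrow [0]$ rather than the identity), but this is still a cofibration, so the conclusion stands.
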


\begin{proof}
  We will discuss the case $n = 2$; the other cases are similar.  We
  will construct a cofibrant cosimplicial simplicial set $\diag X$, a
  fibrant bicosimplicial simplicial set Y, and a weak equivalence
  $f\colon \diag X \to \diagon\diag Y$ such that the corresponding map
  $\LKan\diag X \to \diag Y$ is not a weak equivalence.

  For each $k \ge 0$, let $(\Delta[k])^{0}$ denote the $0$-skeleton of
  $\Delta[k]$, and let $\diag X$ be the cosimplicial simplicial set
  that is the degreewise $0$-skeleton of the cosimplicial standard
  simplex $\Delta$, so that $\diag X^{k} = (\Delta[k])^{0}$.  Since
  the maximal augmentation of $\diag X$ is empty, $\diag X$ is
  cofibrant (see \cite{MCATL}*{Cor.~15.9.10}).

  Let $\diag W$ be the bicosimplicial simplicial set obtained from
  $\diag X$ by making it constant in the second index, i.e., $\diag
  W^{(p,q)} = \diag X^{p} = (\Delta[p])^{0}$, and let $\diag W \to
  \diag Y$ be a fibrant approximation to $\diag W$, so that $\diag W
  \to \diag Y$ is a weak equivalence of bicosimplicial simplicial sets
  and $\diag Y$ is fibrant.  There is an obvious isomorphism of
  cosimplicial simplicial sets $\diag X \to \diagon\diag W$, and our
  map $f\colon \diag X \to \diagon\diag Y$ is the composition $\diag X
  \to \diagon\diag W \to \diagon\diag Y$; it is the composition of an
  isomorphism and an objectwise weak equivalence, and so it is an
  objectwise weak equivalence, i.e., a weak equivalence of
  cosimplicial simplicial sets.

  The functor $\LKan\colon \SiS^{\bD} \to \SiS^{\bD^{2}}$ takes the
  cosimplicial standard simplex $\Delta$ to the bicosimplicial product
  of standard simplices $\Delta^{(2)}$ (see \thmref{thm:LeftKan}).
  Since the colimit of a diagram of simplicial sets is constructed
  degreewise, and every simplicial set in the diagram whose colimit is
  $(\LKan\diag X)^{(p,q)}$ is discrete (i.e., has all face and
  degeneracy operators isomorphisms), each $(\LKan\diag X)^{(p,q)}$ is
  also discrete, and so $\LKan\diag X = (\Delta^{(2)})^{0}$, the
  degreewise $0$-skeleton of $\Delta^{(2)}$.  Thus,
  $(\LKan\diagon\diag X)^{(1,1)}$ is the $0$-skeleton of
  $\Delta[1]\times \Delta[1]$, and has four path components, while
  $\diag Y^{(1,1)}$ is weakly equivalent to $(\Delta[1])^{0}$, and has
  two path components.  Thus, the map $\LKan\diag X \to \diag Y$ is
  not a weak equivalence.
\end{proof}

\begin{cor}
  \label{cor:notqeTop}
  If $\cat M = \Top$, the model category of topological spaces, and $n
  \ge 2$, then the Quillen functors $\diagon\colon \cat M^{\bD^{n}}
  \to \cat M^{\bD}$ and its left adjoint $\LKan\colon \cat M^{\bD} \to
  \cat M^{\bD^{n}}$ are not Quillen equivalences.
\end{cor}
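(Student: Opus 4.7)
The plan is to reduce this topological statement to its simplicial counterpart \thmref{thm:NotQE} by transferring the counterexample along the standard Quillen equivalence $\adj{\real{-}}{\SiS}{\Top}{\mathrm{Sing}}$. Since objectwise postcomposition with a left (respectively right) Quillen functor preserves Reedy cofibrations and trivial cofibrations (respectively Reedy fibrations and trivial fibrations) — these functors commute with the colimits (respectively limits) used to form latching (respectively matching) objects — the above adjunction prolongs to Quillen equivalences between $\SiS^{\bD}$ and $\Top^{\bD}$, and between $\SiS^{\bD^{n}}$ and $\Top^{\bD^{n}}$, in their Reedy model structures.

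Next I would establish that the square of left adjoints
\begin{displaymath}
\xymatrix{
\SiS^{\bD} \ar[r]^{\LKan} \ar[d]_{\real{-}} & \SiS^{\bD^{n}} \ar[d]^{\real{-}} \\
\Top^{\bD} \ar[r]^{\LKan} & \Top^{\bD^{n}}
}
\end{displaymath}
commutes up to natural isomorphism. The cleanest route is to check the corresponding square of right adjoints: both $\diagon$ and objectwise $\mathrm{Sing}$ act pointwise on diagrams, so they commute strictly, and the desired natural isomorphism of left adjoints then follows from the uniqueness of adjoints.

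To finish I would argue by contradiction. If $\adj{\LKan}{\Top^{\bD}}{\Top^{\bD^{n}}}{\diagon}$ were a Quillen equivalence, then passing the square above to homotopy categories would produce a square of functors three of which are equivalences: the two vertical realization functors by the Quillen equivalence $\real{-} \dashv \mathrm{Sing}$, and the bottom horizontal by hypothesis. Two-out-of-three would then force the top horizontal to be an equivalence as well, making $\adj{\LKan}{\SiS^{\bD}}{\SiS^{\bD^{n}}}{\diagon}$ a Quillen equivalence and contradicting \thmref{thm:NotQE}. The main routine point is to verify that the objectwise realization and singular functors really are Reedy Quillen adjoints, but this is a standard consequence of their good interaction with colimits and limits.
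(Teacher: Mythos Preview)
Your argument is correct but takes a more abstract route than the paper. The paper's proof is a single sentence: it applies geometric realization directly to the explicit counterexample built in \thmref{thm:NotQE}, obtaining a cofibrant cosimplicial space $\real{\diag X}$, a fibrant bicosimplicial space, and a weak equivalence whose adjoint map under $\LKan$ still fails to be a weak equivalence (the $\pi_{0}$ count at bidegree $(1,1)$ is unchanged by realization). Implicitly this uses the same compatibility $\real{-}\circ\LKan \iso \LKan\circ\real{-}$ that you isolate. Your approach instead packages everything into the commuting square of left Quillen functors, passes to total left derived functors, and applies two-out-of-three for equivalences of homotopy categories together with the characterization of Quillen equivalences via derived functors. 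The paper's version is more concrete and hands you an explicit topological counterexample; your version is cleaner conceptually and avoids having to check the (co)fibrancy of the transported data by hand.
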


\begin{proof}
  The geometric realization of the example in \thmref{thm:NotQE} is a
  cofibrant cosimplicial space $\diag X$, a fibrant bisimplicial space
  $\diag Y$, and a weak equivalence $f\colon \diag X \to \diag Y$ such
  that the corresponding map $\LKan\diag X \to \diag Y$ is not a weak
  equivalence.
\end{proof}


\section{Proof of \propref{prop:matchmatch}}
\label{sec:Proof}

Since $\inv{\bD^{n}} = \inv{\bD}\times \inv{\bD} \times \cdots \times
\inv{\bD}$ (see \cite{MCATL}*{Prop.~15.1.6}), the matching category
$\matchcat{\bD^{n}}{([k],[k],\ldots,[k])}$ has as objects
the maps
\begin{displaymath}
  \bigl([k] \to [p_{1}], [k] \to [p_{2}], \ldots, [k]\to [p_{n}]\bigr)
\end{displaymath}
in $\bD^{n}$ such that each $[k] \to [p_{i}]$ is a surjection and such
that at least one of them is not the identity map.  The diagonal
embedding of $\bD$ into $\bD^{n}$ (see \defref{def:diag}) takes the
matching category $\matchcat{\bD}{[k]}$ to the full subcategory of
$\matchcat{\bD^{n}}{([k],[k],\ldots,[k])}$ with objects the maps
\begin{displaymath}
  \{\phi^{n} \suchthat \text{$\phi\colon [k] \to [p]$ is a surjection
    and is not the identity map}\}
\end{displaymath}
and we will identify $\matchcat{\bD}{[k]}$ with its image in
$\matchcat{\bD^{n}}{([k],[k],\ldots,[k])}$.
Our map
\begin{multline*}
  P_{k}^{\bD^{n}} \quad=\quad
  \pullback
  {\bigl(\lim_{\matchcat{\bD^{n}}{([k],[k],\ldots,[k])}} \diag X\bigr)}
  {\bigl(\lim_{\matchcat{\bD^{n}}{([k],[k],\ldots,[k])}} \diag Y\bigr)}
  {\diag Y^{(k,k,\ldots,k)}}\\
  \longrightarrow\qquad
  P_{k}^{\bD} \quad=\quad
  \pullback{\bigl(\lim_{\matchcat{\bD}{[k]}} \diag X\bigr)}
  {\bigl(\lim_{\matchcat{\bD}{[k]}} \diag Y\bigr)}
  {\diag Y^{(k,k,\ldots,k)}}
\end{multline*}
(see \defref{def:Pdelta}) is induced by restricting the functors
$\diag X$ and $\diag Y$ to this subcategory.  We will define a nested
sequence of subcategories of
$\matchcat{\bD^{n}}{([k],[k],\ldots,[k])}$
\begin{displaymath}
  \matchcat{\bD}{[k]} = \cat C_{-1} \subset \cat C_{0} \subset \cdots
  \subset \cat C_{nk-1} = \matchcat{\bD^{n}}{([k],[k],\ldots,[k])}
\end{displaymath}
and for $-1 \le i \le nk-1$ we will let $P_{i}$ be the pullback
\begin{displaymath}
  \xymatrix{
    {P_{i}} \ar@{..>}[r] \ar@{.>}[d]
  & {\diag Y^{(k,k,\ldots,k)}} \ar[d]\\
  {\lim_{\cat C_{i}}\diag X} \ar[r]
  & {\lim_{\cat C_{i}}\diag Y \Period}
  }
\end{displaymath}
Thus, we will have a factorization of our map $P_{k}^{\bD^{n}} \to
P_{k}^{\bD}$ as
\begin{displaymath}
  P_{k}^{\bD^{n}} = P_{nk-1} \longrightarrow
  P_{nk-2} \longrightarrow \cdots \longrightarrow
  P_{-1} = P_{k}^{\bD}
\end{displaymath}
and we will show that the map $P_{i+1} \to P_{i} $ is a fibration for
$-1 \le i \le nk-2$.
\begin{defn}
  \label{def:SeqCat}
  If $n$ is a positive integer, $k$ is a nonnegative integer, and $-1
  \le i \le nk-1$, we let $\cat C_{i}$ be the full subcategory of
  $\matchcat{\bD^{n}}{([k],[k],\ldots,[k])}$ with objects the union of
  \begin{itemize}
  \item the objects of $\matchcat{\bD^{n}}{([k],[k],\ldots,[k])}$
    whose target is of degree at most $i$, and
  \item the objects of $\matchcat{\bD^{n}}{([k],[k],\ldots,[k])}$ in
    the image of the embedding of $\matchcat{\bD}{[k]}$.
    \end{itemize}
    That is, we let $\cat C_{i}$ be the full subcategory of
    $\matchcat{\bD^{n}}{([k],[k],\ldots,[k])}$ with objects the maps
  \begin{displaymath}
    (\phi_{1},\phi_{2},\ldots, \phi_{n})\colon
    \bigl([k],[k],\ldots, [k]\bigr) \longrightarrow
    \bigl([p_{1}], [p_{2}], \ldots, [p_{n}]\bigr)
  \end{displaymath}
  such that either $p_{1} + p_{2} + \cdots + p_{n} \le i$ or $\phi_{1}
  = \phi_{2} = \cdots = \phi_{n}$.
\end{defn}

\begin{prop}
  \label{prop:EachFib}
  If $-1 \le i \le nk-2$, then the map $P_{i+1} \to P_{i}$ is a
  fibration.
\end{prop}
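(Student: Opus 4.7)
The plan is to analyze the passage from $\cat C_i$ to $\cat C_{i+1}$ object by object and identify $P_{i+1} \to P_i$ as a pullback of a finite product of relative matching maps of $\diag X \to \diag Y$ in $\bD^{n}$, each of which is a fibration by hypothesis. Write $N_{i+1} := \cat C_{i+1} \setminus \cat C_i$; every $\alpha = (\phi_1,\ldots,\phi_n) \in N_{i+1}$ has target $\bigl([p_1],\ldots,[p_n]\bigr)$ with $p_1 + \cdots + p_n = i+1$, and at least one $\phi_j$ differs from the others.

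The first task is to enumerate the morphisms in $\cat C_{i+1}$ incident to $\alpha$. Any morphism $\alpha \to \gamma$ with $\gamma \ne \alpha$ is a non-identity surjection out of $\bigl([p_1],\ldots,[p_n]\bigr)$, which strictly decreases sum-degree and hence lands in $\cat C_i$; these morphisms are in bijection with the matching category $\matchcat{\bD^{n}}{([p_1],\ldots,[p_n])}$. Any morphism $\gamma \to \alpha$ with $\gamma \ne \alpha$ requires $\gamma$'s target to have sum-degree at least $i+1$, so $\gamma$ must be a diagonal object $\psi^n$ for some non-identity surjection $\psi\colon [k] \to [p]$ through which each $\phi_j$ factors as $\phi_j = g_j \circ \psi$. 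Finally, there are no morphisms between distinct objects of $N_{i+1}$, since any such would be an identity-on-target surjection, forcing equality.

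From this analysis, a lift of $x \in \lim_{\cat C_i} \diag X$ to $\lim_{\cat C_{i+1}} \diag X$ is a choice, for each $\alpha \in N_{i+1}$, of $x'_\alpha \in \diag X^{(p_1,\ldots,p_n)}$ satisfying (i) the image of $x'_\alpha$ in $\match^{\bD^{n}}_{([p_1],\ldots,[p_n])} \diag X$ equals the image of $x$, and (ii) $x'_\alpha = \diag X(g)(x_{\psi^n})$ for every diagonal morphism $\psi^n \to \alpha$. Functoriality of $\diag X$ together with the compatibility of $x$ over $\cat C_i$ implies that when (ii) is nonvacuous it forces a unique $x'_\alpha$ which automatically satisfies (i); when (ii) is vacuous, $x'_\alpha$ ranges freely over the fiber of the matching map. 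The same analysis applies to $\diag Y$, except that in $P_{i+1}$ the element $y'_\alpha$ is already forced by $y \in \diag Y^{(k,\ldots,k)}$ to equal $\diag Y(\alpha)(y)$. Consequently, $P_{i+1} \to P_i$ is the pullback, along the map induced by $(x,y)$, of the finite product of relative matching maps
\[
  \prod_\alpha \Bigl( \diag X^{(p_1,\ldots,p_n)} \longrightarrow \diag Y^{(p_1,\ldots,p_n)} \times_{\match^{\bD^{n}}_{([p_1],\ldots,[p_n])} \diag Y} \match^{\bD^{n}}_{([p_1],\ldots,[p_n])} \diag X \Bigr),
\]
taken over those $\alpha \in N_{i+1}$ that receive no diagonal morphism (the others contribute isomorphisms to the factorization). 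Each factor is a fibration by the Reedy fibration hypothesis in $\bD^{n}$, and fibrations are stable under finite products and pullbacks, so $P_{i+1} \to P_i$ is a fibration.

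The main obstacle is condition (ii) when $\alpha$ receives a diagonal morphism: one must check that the forced value $\diag X(g)(x_{\psi^n})$ is independent of the choice of $(\psi,g)$ and is automatically consistent with every morphism $\alpha \to \gamma$ into $\cat C_i$. The consistency follows by composing $\psi^n \to \alpha \to \gamma$ to obtain a morphism entirely in $\cat C_i$ along which $x$ is already compatible; independence comes from comparing two choices $\psi,\psi'$ through a common refinement (again inside $\cat C_i$). Both reductions are ultimately just functoriality of $\diag X$ plus the limit compatibility of $x$, but spelling them out is the technical heart of the argument.
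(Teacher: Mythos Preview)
Your argument is correct and is essentially the paper's proof in compressed, element-wise form: the paper makes your dichotomy explicit by writing $N_{i+1} = S_{i+1} \cup T_{i+1}$ (your ``receives a diagonal morphism'' versus ``does not''), introduces the intermediate subcategory $\cat C'_{i+1} = \cat C_i \cup S_{i+1}$, and then proves separately that $P'_{i+1} \to P_i$ is an isomorphism (your ``forced value'' analysis, organized as a left-cofinality statement resting on the terminal diagonal factorization---exactly your ``common refinement'') and that $P_{i+1} \to P'_{i+1}$ is a pullback of the product of relative matching maps indexed by $T_{i+1}$, hence a fibration. The only stylistic difference is that the paper carries this out with explicit pullback squares and Reedy's cube lemma rather than generalized elements, but the content is the same.
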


\begin{proof}
  The objects of $\cat C_{i+1}$ that aren't in $\cat C_{i}$ are maps
  $\bigl([k] \to [p_{1}], [k] \to [p_{2}], \ldots, [k] \to
  [p_{n}]\bigr)$ such that $p_{1} + p_{2} + \cdots + p_{n} = i+1$
  (though not necessarily all such maps), and this set of maps can be
  divided into two subsets:
  \begin{itemize}
  \item the set $S_{i+1}$ of maps for which there exists an
    epimorphism $\psi\colon [k] \to [j]$ with $j<k$ and a
    factorization through $\psi^{n}\colon \bigl([k],[k],\ldots,
    [k]\bigr) \to \bigl([j],[j],\ldots, [j]\bigr)$,
  \item the set $T_{i+1}$ of maps for which there is no such
    factorization.
  \end{itemize}
  We let $\cat C'_{i+1}$ be the full subcategory of
  $\matchcat{\bD^{n}}{([k],[k],\ldots,[k])}$ with objects the union of
  $S_{i+1}$ with the objects of $\cat C_{i}$, and define $P'_{i+1}$ as
  the pullback
  \begin{displaymath}
    \xymatrix{
      {P'_{i+1}} \ar@{..>}[r] \ar@{..>}[d]
      & {\diag Y^{(k,k,\ldots,k)}} \ar[d]\\
      {\lim_{\cat C'_{i+1}}\diag X} \ar[r]
      & {\lim_{\cat C'_{i+1}}\diag Y \Period}
    }
  \end{displaymath}
  We have inclusions of categories $\cat C_{i} \subset \cat C'_{i+1}
  \subset \cat C_{i+1}$, and the maps
  \begin{displaymath}
    \lim_{\cat C_{i+1}} \diag X \longrightarrow
    \lim_{\cat C_{i}}\diag X
    \qquad\text{and}\qquad
    \lim_{\cat C_{i+1}} \diag Y \longrightarrow
    \lim_{\cat C_{i}}\diag Y
  \end{displaymath}
  factor as
  \begin{displaymath}
    \lim_{\cat C_{i+1}} \diag X \longrightarrow
    \lim_{\cat C'_{i+1}} \diag X \longrightarrow
    \lim_{\cat C_{i}}\diag X
    \qquad\text{and}\qquad
    \lim_{\cat C_{i+1}} \diag Y \longrightarrow
    \lim_{\cat C'_{i+1}} \diag Y \longrightarrow
    \lim_{\cat C_{i}}\diag Y \Period
  \end{displaymath}
  These factorizations induce a factorization
  \begin{displaymath}
    P_{i+1} \longrightarrow  P'_{i+1} \longrightarrow P_{i}
    \qquad\text{of the map}\qquad
    P_{i+1} \longrightarrow P_{i} \Period
  \end{displaymath}
  \propref{prop:isoprime} asserts that the map $P'_{i+1} \to P_{i}$ is
  an isomorphism and \propref{prop:IndFibr} asserts that the map
  $P_{i+1} \to P'_{i+1}$ is a fibration.
\end{proof}

\begin{prop}
  \label{prop:isoprime}
  For $-1 \le i \le nk-2$, the map $P'_{i+1} \to P_{i}$ is an
  isomorphism.
\end{prop}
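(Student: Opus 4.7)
My plan is to deduce the statement from the stronger claim that restriction along the full inclusion $\iota\colon \cat C_i \hookrightarrow \cat C'_{i+1}$ induces isomorphisms $\lim_{\cat C'_{i+1}} \diag X \cong \lim_{\cat C_i} \diag X$ and $\lim_{\cat C'_{i+1}} \diag Y \cong \lim_{\cat C_i} \diag Y$; since $P'_{i+1}$ and $P_i$ are pullbacks over the common cospan leg $\diag Y^{(k,\ldots,k)}$, functoriality of pullbacks then gives the desired isomorphism $P'_{i+1} \cong P_i$. By the standard criterion for such a restriction of limits to be an isomorphism, the task reduces to checking that for every object $\alpha$ of $\cat C'_{i+1}$ the slice whose objects are pairs $(c, c \to \alpha)$ with $c \in \cat C_i$ and $c \to \alpha$ a morphism of $\cat C'_{i+1}$ is nonempty and connected.

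The case $\alpha \in \cat C_i$ is trivial (the identity is terminal), so the content lies in $\alpha \in S_{i+1}$. Write $\alpha = (\phi_1, \ldots, \phi_n)\colon ([k], \ldots, [k]) \to ([p_1], \ldots, [p_n])$ with $p_1 + \cdots + p_n = i+1$. A morphism $c \to \alpha$ in the matching category requires a componentwise surjection from the target of $c$ onto $([p_1], \ldots, [p_n])$, so the target dimensions of $c$ sum to at least $i+1$; this rules out the summed-target-$\le i$ part of $\cat C_i$, forcing $c$ to be a diagonal object $\psi^n$ for some nonidentity surjection $\psi\colon [k] \to [j]$. The morphism to $\alpha$ is then a tuple of surjections $\gamma_\ell\colon [j] \to [p_\ell]$ with $\gamma_\ell \circ \psi = \phi_\ell$, i.e., each $\phi_\ell$ factors through $\psi$. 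Let $\psi_\alpha\colon [k] \to [j_\alpha]$ be the surjection whose fiber partition is the common refinement (meet in the lattice of interval partitions of $[k]$) of the fiber partitions of the $\phi_\ell$, so that $\psi_\alpha$ is the coarsest surjection through which every $\phi_\ell$ factors. Membership $\alpha \in S_{i+1}$ guarantees at least one such nonidentity $\psi$, forcing $j_\alpha < k$. If $\gamma_{\alpha,\ell}\colon [j_\alpha] \to [p_\ell]$ is the unique surjection with $\gamma_{\alpha,\ell} \circ \psi_\alpha = \phi_\ell$, then $(\psi_\alpha^n, (\gamma_{\alpha,1}, \ldots, \gamma_{\alpha,n}))$ is an object of the slice.

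I claim this object is terminal. Given any $(\psi^n, (\gamma_1, \ldots, \gamma_n))$ in the slice, the partition of $\psi$ refines each $\phi_\ell$-partition and hence refines the $\psi_\alpha$-partition, so there is a unique surjection $\delta\colon [j] \to [j_\alpha]$ with $\delta \circ \psi = \psi_\alpha$. A morphism in the slice to $(\psi_\alpha^n, (\gamma_{\alpha,1}, \ldots, \gamma_{\alpha,n}))$ is a tuple $(\delta_1, \ldots, \delta_n)$ of surjections $[j] \to [j_\alpha]$ with $\delta_\ell \circ \psi = \psi_\alpha$ in each coordinate; since $\psi$ is epic this pins down $\delta_1 = \cdots = \delta_n = \delta$, and the compatibility $\gamma_{\alpha,\ell} \circ \delta = \gamma_\ell$ over $\alpha$ then follows by post-composing with the epimorphism $\psi$ and cancelling. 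Thus the slice has $\psi_\alpha^n$ as a terminal object, is in particular nonempty and connected, and restriction preserves limits as required.

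The only real difficulty is the bookkeeping with morphisms in the product matching category: one must confirm that an $\alpha \in S_{i+1}$ can only be mapped into from the diagonal part of $\cat C_i$, and that a morphism between diagonal objects in $\matchcat{\bD^n}{([k], \ldots, [k])}$ is automatically of the form $\delta^n$ because its components are simultaneously determined by post-composition with the common source surjection $\psi$. Once those two observations are made, the partition-lattice construction of $\psi_\alpha$ and the verification of terminality amount to a direct calculation.
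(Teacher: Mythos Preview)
Your proposal is correct and follows essentially the same route as the paper: you reduce the isomorphism $P'_{i+1}\to P_i$ to left cofinality of $\cat C_i \hookrightarrow \cat C'_{i+1}$, and verify cofinality by exhibiting a terminal object in each overcategory $\overcat{\cat C_i}{\alpha}$ via the maximal diagonal factorization of $\alpha$. Your partition-lattice description of $\psi_\alpha$ is exactly the paper's construction (Lemma~\ref{lem:TermFact}) phrased in different language, and your explicit check that a morphism between diagonal objects in the matching category is itself diagonal (because $\psi$ is epic) makes precise a point the paper leaves implicit.
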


The proof of \propref{prop:isoprime} is in \secref{sec:prfisoprime}.

\begin{prop}
  \label{prop:IndFibr}
  For $-1 \le i \le nk-1$, the map $P_{i+1} \to P'_{i+1}$ is a
  fibration.
\end{prop}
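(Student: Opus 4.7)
The plan is to identify $\cat C_{i+1}$ as obtained from $\cat C'_{i+1}$ by adjoining the objects of $T_{i+1}$ in a way that places them ``on top'' of $\cat C'_{i+1}$, and then to use the Reedy fibration hypothesis on $\diag X \to \diag Y$ to see that this extension of limits induces a fibration on the corresponding pullbacks. (If $T_{i+1}$ is empty, the claim is trivial.) First I would verify two properties of each $\sigma \in T_{i+1}$ with target $\bigl([p_{1}],\ldots,[p_{n}]\bigr)$, where $p_{1}+\cdots+p_{n}=i+1$: every non-identity outgoing morphism of $\sigma$ in $\matchcat{\bD^{n}}{([k],\ldots,[k])}$ has codomain in $\cat C_{i}\subset \cat C'_{i+1}$, because a non-identity $\inv{\bD^{n}}$-morphism out of $\bigl([p_{1}],\ldots,[p_{n}]\bigr)$ strictly decreases the degree-sum; and no object of $\cat C_{i+1}$ admits a non-identity morphism to $\sigma$ in the matching category.

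The second property is the crux and the main obstacle, since it is the unique place where the definition of $T_{i+1}$ (as opposed to $S_{i+1}$) is used. An incoming non-identity morphism to $\sigma$ would have to come from an object of strictly larger degree-sum than $i+1$ in $\cat C_{i+1}$, and the only such objects are the diagonal objects $\phi^{n}$ with $\phi\colon[k]\to[p]$ a non-identity surjection and $p\ge p_{j}$ for all $j$. But a morphism $\phi^{n}\to\sigma$ in $\matchcat{\bD^{n}}{([k],\ldots,[k])}$ is precisely a factorization of $\sigma$ through $\phi^{n}$, which would place $\sigma$ in $S_{i+1}$, contradicting $\sigma\in T_{i+1}$.

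Next I would observe that the category of non-identity outgoing morphisms of $\sigma$ in $\matchcat{\bD^{n}}{([k],\ldots,[k])}$ is canonically identified, via postcomposition with $\sigma$, with $\matchcat{\bD^{n}}{([p_{1}],\ldots,[p_{n}])}$, so that the limit of $\diag X$ over these outgoing morphisms is exactly $\match^{\bD^{n}}_{([p_{1}],\ldots,[p_{n}])}\diag X$. Combined with the observation that distinct objects of $T_{i+1}$ have no non-identity morphisms between them (again by the degree-sum argument), this produces a pullback square
\begin{displaymath}
\xymatrix{
{\lim_{\cat C_{i+1}}\diag X} \ar[r]\ar[d] & {\prod_{\sigma\in T_{i+1}}\diag X^{(p_{1},\ldots,p_{n})}}\ar[d]\\
{\lim_{\cat C'_{i+1}}\diag X} \ar[r] & {\prod_{\sigma\in T_{i+1}}\match^{\bD^{n}}_{([p_{1}],\ldots,[p_{n}])}\diag X}
}
\end{displaymath}
and an analogous pullback square for $\diag Y$.

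Finally, pasting these two pullback squares with the pullbacks defining $P_{i+1}$ and $P'_{i+1}$ (see \defref{def:Pdelta}) exhibits $P_{i+1}\to P'_{i+1}$ as the pullback along a natural map out of $P'_{i+1}$ of the product of relative matching maps
\begin{displaymath}
\prod_{\sigma\in T_{i+1}}\Bigl(\diag X^{(p_{1},\ldots,p_{n})}\longrightarrow \pullback{\diag Y^{(p_{1},\ldots,p_{n})}}{\match^{\bD^{n}}_{([p_{1}],\ldots,[p_{n}])}\diag Y}{\match^{\bD^{n}}_{([p_{1}],\ldots,[p_{n}])}\diag X}\Bigr).
\end{displaymath}
Since $\diag X\to\diag Y$ is a Reedy fibration of $n$-cosimplicial objects, each relative matching map in this product is a fibration in $\cat M$ by \defref{def:RFib}, so their product is a fibration, and hence its pullback along $P'_{i+1}$ is the desired fibration.
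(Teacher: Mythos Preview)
Your proof is correct and follows the same overall strategy as the paper: you establish exactly the pullback square of \lemref{lem:pbCtoCprime} (with a more explicit justification than the paper gives for why objects of $T_{i+1}$ receive no non-identity morphisms in $\cat C_{i+1}$), and then deduce the fibration from the Reedy hypothesis via the product of relative matching maps.

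The only difference is in the final step. The paper introduces an intermediate object $Q = \pullback{\lim_{\cat C'_{i+1}}\diag X}{(\lim_{\cat C'_{i+1}}\diag Y)}{\lim_{\cat C_{i+1}}\diag Y}$, proves by an explicit diagram chase (\lemref{lem:PBf}) that $\lim_{\cat C_{i+1}}\diag X \to Q$ is a pullback of $\prod\diag X_{\obDn} \to R$, and then invokes a Reedy cube lemma (\lemref{lem:reedy}) to pass from that fibration to $P_{i+1}\to P'_{i+1}$. You instead assert directly that $P_{i+1}\to P'_{i+1}$ is itself a pullback of the product of relative matching maps. This is true, but the phrase ``pasting these two pullback squares with the pullbacks defining $P_{i+1}$ and $P'_{i+1}$'' hides a small computation: one must produce the map $P'_{i+1}\to R$ (using the projection $P'_{i+1}\to\diag Y^{(k,\ldots,k)}\to\prod_{\sigma}\diag Y_{\obDn}$ together with $P'_{i+1}\to\lim_{\cat C'_{i+1}}\diag X\to\prod_{\sigma}\match_{\obDn}\diag X$) and check compatibility. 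Your route is shorter; the paper's route via $Q$ and the cube lemma is more modular and makes the verification explicit.
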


The proof of \propref{prop:IndFibr} is in \secref{sec:propIndFibr}.

\section{Proof of \propref{prop:isoprime}}
\label{sec:prfisoprime}

\begin{lem}
  \label{lem:TermFact}
  Every morphism
  \begin{displaymath}
    (\alpha_{1}, \alpha_{2}, \ldots, \alpha_{n}) \colon
    \bigl([k], [k], \ldots, [k]\bigr) \longrightarrow
    \bigl([p_{1}], [p_{2}], \ldots, [p_{n}]\bigr)
  \end{displaymath}
  in $\inv{\bD^{n}}$ with domain a diagonal object has a terminal
  factorization through a diagonal morphism, i.e., an epimorphism
  $\beta\colon [k] \to [q]$ and a factorization
  \begin{displaymath}
    \bigl([k], [k], \ldots, [k]\bigr)
    \xrightarrow{\enskip\beta^{n}\enskip}
    \bigl([q], [q], \ldots, [q]\bigr) \longrightarrow
    \bigl([p_{1}], [p_{2}], \ldots, [p_{n}]\bigr)
  \end{displaymath}
  of $(\alpha_{1}, \alpha_{2}, \ldots, \alpha_{n})$ such that every
  epimorphism $\gamma\colon [k] \to [r]$ and factorization
  \begin{displaymath}
    \bigl([k], [k], \ldots, [k]\bigr)
    \xrightarrow{\enskip\gamma^{n}\enskip}
    \bigl([r], [r], \ldots, [r]\bigr) \longrightarrow
    \bigl([p_{1}], [p_{2}], \ldots, [p_{n}]\bigr)
  \end{displaymath}
  of $(\alpha_{1}, \alpha_{2}, \ldots, \alpha_{n})$ through a diagonal
  morphism of $\bD^{n}$ factors uniquely as
  \begin{displaymath}
    \bigl([k], [k], \ldots, [k]\bigr) \xrightarrow{\gamma^{n}}
    \bigl([r], [r], \ldots, [r]\bigr) \xrightarrow{\delta^{n}}
    \bigl([q], [q], \ldots, [q]\bigr) \longrightarrow
    \bigl([p_{1}], [p_{2}], \ldots, [p_{n}]\bigr)
  \end{displaymath}
  with $\delta\gamma = \beta$.
\end{lem}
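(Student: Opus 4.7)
The plan is to construct $\beta$ explicitly as the quotient of $[k]$ by the intersection of the kernel equivalences of the $\alpha_i$, and then read off both the factorization and its universality from the universal property of this quotient in $\bD$. For each $i$, let $\sim_i$ denote the kernel equivalence of $\alpha_i$, so $a \sim_i b$ iff $\alpha_i(a) = \alpha_i(b)$; because $\alpha_i$ is a weakly monotone surjection, the equivalence classes of $\sim_i$ are consecutive intervals of $[k]$. Let $\sim$ be the intersection of the $\sim_i$ as relations on $[k]$.

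The key elementary observation is that $\sim$ is again \emph{convex}: if $a < b < c$ and $a \sim c$, then $a \sim_i c$ for every $i$, and convexity of each $\sim_i$ forces $a \sim_i b$ as well, so $a \sim b$. Hence the equivalence classes of $\sim$ are consecutive intervals, the quotient may be identified with some $[q]$, and the quotient map is a weakly monotone surjection $\beta\colon [k] \to [q]$. Since $\sim$ is contained in each $\sim_i$, every $\alpha_i$ descends uniquely along $\beta$ to a weakly monotone map $\phi_i\colon [q] \to [p_i]$ with $\alpha_i = \phi_i\beta$, producing the required diagonal factorization $\beta^n$ followed by $(\phi_1,\ldots,\phi_n)$.

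For terminality, suppose $\gamma\colon [k] \to [r]$ is an epimorphism and $(\alpha_1,\ldots,\alpha_n)$ factors as $\bigl([k],\ldots,[k]\bigr) \xrightarrow{\gamma^n} \bigl([r],\ldots,[r]\bigr) \xrightarrow{(\psi_1,\ldots,\psi_n)} \bigl([p_1],\ldots,[p_n]\bigr)$. Then $\alpha_i = \psi_i\gamma$ for every $i$, so the kernel equivalence of $\gamma$ is contained in each $\sim_i$, hence in $\sim$. By the universal property of $\beta$ as a quotient in $\bD$ there is a unique weakly monotone surjection $\delta\colon [r] \to [q]$ with $\delta\gamma = \beta$ (monotonicity and surjectivity of $\delta$ are forced by those of $\beta$ and $\gamma$). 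Then $\phi_i\delta$ and $\psi_i$ agree after precomposition with the epimorphism $\gamma$, since both equal $\alpha_i$, so $\phi_i\delta = \psi_i$; this yields the desired factorization through $\delta^n$ followed by $(\phi_1,\ldots,\phi_n)$, and the uniqueness of $\delta$ is forced by $\gamma$ being an epimorphism.

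The only step requiring any real argument is the convexity of $\sim$, which is immediate as indicated above; everything else is the formal universal property of the quotient $[k] \to [q]$ in $\bD$, so the main obstacle is really organizing the argument cleanly rather than overcoming any technical difficulty.
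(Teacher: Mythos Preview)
Your proof is correct and follows essentially the same approach as the paper. The paper encodes each epimorphism $\alpha_j$ by the set $U_j = \{i \mid \alpha_j(i) = \alpha_j(i+1)\}$ and takes $\beta$ to be the epimorphism determined by $U = \bigcap_j U_j$; this is exactly your quotient by the intersection of the kernel equivalences, since for convex equivalence relations on $[k]$ the data of the relation is the same as the data of which adjacent pairs are identified. Your version spells out the universality verification more fully than the paper does, but the construction and the idea are identical.
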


\begin{proof}
  Each of the epimorphisms $\alpha_{j}\colon [k] \to [p_{j}]$ is
  determined by the set $U_{j}$ of integers $i$ such that
  $\alpha_{j}(i) = \alpha_{j}(i+1)$.  We let $U = \bigcap_{1 \le j \le
    n} U_{j}$.  The set $U$ now determines an epimorphism $\beta\colon
  [k] \to [q]$ for some $q \le k$, and the terminal factorization of
  $\alpha$ is the factorization through $\beta^{n}\colon \bigl([k],
  [k],\ldots,[k]\bigr) \to \bigl([q], [q], \ldots, [q]\bigr)$.
\end{proof}

\begin{prop}
  \label{prop:cofinal}
  For $-1 \le i \le nk-2$, the inclusion of categories $\cat C_{i}
  \subset \cat C'_{i+1}$ is left cofinal (see
  \cite{MCATL}*{Def.~14.2.1}).
\end{prop}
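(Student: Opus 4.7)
The plan is to verify the condition of \cite{MCATL}*{Def.~14.2.1} for left cofinality directly: for every object $Z$ of $\cat C'_{i+1}$, I will show that the comma category with objects the pairs $(Y, f\colon Y \to Z)$ and $Y \in \cat C_i$ is nonempty and connected. I split into two cases. If $Z \in \cat C_i$, then $(Z, \mathrm{id}_Z)$ is a terminal object of the comma category (for any $(Y, f)$, the morphism $f$ itself is a map $(Y, f) \to (Z, \mathrm{id}_Z)$), so this case is immediate. If $Z \in \cat C'_{i+1} \setminus \cat C_i$, then $Z \in S_{i+1}$ and is non-diagonal, with target $\bigl([p_1], \ldots, [p_n]\bigr)$ satisfying $p_1 + \cdots + p_n = i+1$.

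For non-emptiness in the second case, I will use \lemref{lem:TermFact} to get the terminal diagonal factorization $Z = \lambda \circ \beta^n$ with $\beta\colon [k] \to [q]$ an epimorphism. Because $Z \in S_{i+1}$, $Z$ factors through $\psi^n$ for some epimorphism $\psi\colon [k] \to [j]$ with $j < k$, so by the uniqueness clause of \lemref{lem:TermFact}, $\beta = \delta \psi$ for some $\delta\colon [j] \to [q]$, which forces $q \le j < k$ and hence $\beta$ non-identity. Therefore $\beta^n$ lies in $\matchcat{\bD}{[k]} = \cat C_{-1} \subset \cat C_i$, and $(\beta^n, \lambda)$ is an object of the comma category.

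For connectedness I will use a dimension count to show that every $(Y, f)$ admits a morphism to $(\beta^n, \lambda)$. A morphism $f = (f_1, \ldots, f_n)\colon Y \to Z$ in $\matchcat{\bD^n}{\bigl([k],\ldots,[k]\bigr)}$ is componentwise an epimorphism $f_j\colon [r_j] \twoheadrightarrow [p_j]$, where $\bigl([r_1],\ldots,[r_n]\bigr)$ is the target of $Y$; hence $p_j \le r_j$ for each $j$ and so $i+1 = \sum p_j \le \sum r_j$. This excludes the non-diagonal option $\sum r_j \le i$ for $Y \in \cat C_i$, forcing $Y$ to be diagonal: $Y = \phi^n$ for a non-identity epimorphism $\phi\colon [k] \to [q_\phi]$. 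Then $Z = f \circ \phi^n$ is a diagonal factorization of $Z$, and \lemref{lem:TermFact} yields a unique $\delta\colon [q_\phi] \to [q]$ with $\delta \phi = \beta$ and $\lambda \circ \delta^n = f$, so $\delta^n$ gives the required morphism $(\phi^n, f) \to (\beta^n, \lambda)$ in the comma category. I expect the main point requiring care to be getting the direction of morphisms in the matching category right—target dimensions decrease along morphisms—which is exactly what makes the dimension count force $Y$ to be diagonal, after which \lemref{lem:TermFact} does the rest.
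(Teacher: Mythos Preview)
Your proof is correct and follows essentially the same approach as the paper's: both identify the terminal object of the overcategory $\overcat{\cat C_{i}}{Z}$ as the terminal diagonal factorization supplied by \lemref{lem:TermFact}, after first observing (via the degree-nonincreasing property of morphisms in $\inv{\bD^{n}}$) that every object of that overcategory must be diagonal. Your treatment is somewhat more explicit---you separately verify that the terminal factorization $\beta^{n}$ actually lies in $\cat C_{i}$ (i.e., that $q < k$, using the defining property of $S_{i+1}$) and you spell out the dimension count that forces $Y$ to be diagonal---but the underlying argument is the same.
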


\begin{proof}
  Let $\alpha = \bigl([k] \to [p_{1}], [k] \to [p_{2}], \ldots, [k]
  \to [p_{n}]\bigr)$ be an object of $\cat C'_{i+1}$ that isn't in
  $\cat C_{i}$.  Since every morphism in $\inv{\bD^{n}}$ lowers
  degree, the only objects of $\overcat{\cat C_{i}}{\alpha}$ are
  factorizations of $\alpha$ through $\phi^{n}\colon
  \bigl([k],[k],\ldots, [k]\bigr) \to \bigl([j],[j],\ldots, [j]\bigr)$
  for some epimorphism $\phi\colon [k] \to [j]$ with $j < k$, and
  \lemref{lem:TermFact} implies that there is a terminal such
  factorization, i.e., one through which all other factorizations
  factor uniquely.  That terminal factorization is a terminal object
  of the overcategory $\overcat{\cat C_{i}}{\alpha}$, and so that
  overcategory is nonempty and connected, and so the inclusion $\cat
  C_{i} \subset \cat C'_{i+1}$ is left cofinal.
\end{proof}

\begin{proof}[Proof of \propref{prop:isoprime}]
  For $-1 \le i \le nk-2$, \propref{prop:cofinal} implies that the
  inclusion of categories $\cat C_{i} \subset \cat C'_{i+1}$ is left
  cofinal, and so the maps $\lim_{\cat C'_{i+1}} \diag X \to
  \lim_{\cat C_{i}}\diag X$ and $\lim_{\cat C'_{i+1}} \diag Y \to
  \lim_{\cat C_{i}}\diag Y$ are isomorphisms (see
  \cite{MCATL}*{Thm.~14.2.5}), and so the induced map $P'_{i+1} \to
  P_{i}$ is an isomorphism.
\end{proof}

\section{Proof of \propref{prop:IndFibr}}
\label{sec:propIndFibr}

\begin{notation}
  \label{not:obUC}
  To save space, we will use the following compact notation:
  \begin{itemize}
  \item We will use $\obDn$ to denote an object $\bigl([p_{1}],
    [p_{2}], \ldots, [p_{n}]\bigr)$ of $\bD^{n}$.
  \item If $k \ge 0$, we will use $\obUC$ to denote an object
    $\bigl([k],[k], \ldots, [k]\bigr) \to \bigl([p_{1}], [p_{2}],
    \ldots, [p_{n}]\bigr)$ of the matching category
    $\matchcat{\bD^{n}}{([k], [k], \ldots, [k])}$.
  \end{itemize}
\end{notation}

\begin{lem}
  \label{lem:pbCtoCprime}
  For each $n$-cosimplicial object $\diag X$ in $\cat M$ there is a
  pullback square
  \begin{equation}
    \label{diag:pbCtoCprime}
    \vcenter{
      \xymatrix{
        {\lim_{\cat C_{i+1}} \diag X} \ar[r] \ar[d]
        & {\lim_{\cat C'_{i+1}} \diag X} \ar[d]\\
        {\prod_{(\obUC)\in T_{i+1}} \hspace{-2em}\diag X_{\obDn}} \ar[r]
        & {\prod_{(\obUC)\in T_{i+1}}
          \lim_{\matchcat{\bD^{n}}{\obDn}} \diag X \Period}
      }
    }
  \end{equation}
\end{lem}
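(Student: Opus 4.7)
The plan is to verify the universal property of the pullback square directly, by first analyzing how the objects of $T_{i+1}$ are attached to the rest of $\cat C_{i+1}$. The key observation to exploit is that every non-identity morphism in $\matchcat{\bD^{n}}{([k],[k],\ldots,[k])}$ is a non-identity surjection in $\inv{\bD^{n}}$, and therefore strictly lowers the total degree of the target.

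The main preliminary step is to establish three structural facts about any $\alpha = (\obUC) \in T_{i+1}$. (a) There are no non-identity morphisms between distinct objects of $T_{i+1}$, since any such morphism would be a non-identity surjection between two targets of the same total degree $i+1$. (b) There are no non-identity morphisms in $\cat C_{i+1}$ into $\alpha$: such a morphism would be an epi $[\vec{p}'] \to \obDn$ with $\sum p'_{j} > i+1$, forcing the source to be a diagonal object $\phi^{n}$ for some non-identity epi $\phi\colon [k] \to [j]$; but then the morphism would exhibit $\alpha$ as a factorization through $\phi^{n}$, contradicting $\alpha \in T_{i+1}$. (c) The non-identity morphisms out of $\alpha$ in $\cat C_{i+1}$ are, via composition with $\alpha$, in natural bijection with the objects of $\matchcat{\bD^{n}}{\obDn}$, and all of their targets have total degree strictly less than $i+1$ and so lie in $\cat C_{i} \subset \cat C'_{i+1}$.

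Using (a), (b), (c), a cone on $\cat C_{i+1}$ over $\diag X$ is the same data as a cone on $\cat C'_{i+1}$ together with, for each $\alpha \in T_{i+1}$, an element $x_{\alpha} \in \diag X_{\obDn}$, subject only to the compatibility condition coming from (c): for every non-identity epi $\rho\colon \obDn \to [\vec{p}']$ the image of $x_{\alpha}$ in $\diag X_{[\vec{p}']}$ must agree with the value of the $\cat C'_{i+1}$-cone at $\rho\alpha \in \cat C_{i}$. This is equivalent to saying that the image of $x_{\alpha}$ under the matching map $\diag X_{\obDn} \to \match_{\obDn}^{\bD^{n}} \diag X$ equals the image of the $\cat C'_{i+1}$-cone under the natural map $\lim_{\cat C'_{i+1}} \diag X \to \match_{\obDn}^{\bD^{n}} \diag X$ (which is well defined by (c) since the relevant composites $\rho\alpha$ all lie in $\cat C_{i}$, and the cone condition on $\cat C_i$ matches the cone condition on the matching category of $\obDn$). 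Taking the product over $\alpha \in T_{i+1}$ of this compatibility condition gives exactly the universal property of the square \eqref{diag:pbCtoCprime}, which is what needs to be checked.

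The one delicate point is (b), since it is the defining property of $T_{i+1}$ (no factorization through a non-identity diagonal morphism) that is precisely what decouples the $T_{i+1}$-contributions from each other and from $\cat C'_{i+1}$ except via matching; everything else is bookkeeping on cones. If this structural analysis is clean, the pullback identification is formal.
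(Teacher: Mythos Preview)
Your proposal is correct and follows essentially the same route as the paper's proof: both analyze the objects of $T_{i+1}$ as ``new'' objects adjoined to $\cat C'_{i+1}$, observe that there are no non-identity morphisms in $\cat C_{i+1}$ with codomain in $T_{i+1}$ and that the non-identity morphisms out of an element of $T_{i+1}$ are exactly the objects of its matching category, and conclude the pullback identification from this. Your write-up is more explicit than the paper's, which simply asserts the two structural facts without justification; in particular, your argument for (b)---that a non-identity morphism into $\alpha\in T_{i+1}$ would force a factorization of $\alpha$ through a non-identity diagonal $\phi^{n}$, contradicting the definition of $T_{i+1}$---is exactly the content behind the paper's unproved claim that ``there are no non-identity morphisms in $\cat C_{i+1}$ with codomain an element of $T_{i+1}$.''
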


\begin{proof}
  For every element $\obUC$ of $T_{i+1}$, every object of the matching
  category $\matchcat{\bD^{n}}{\obDn}$ is a map to an object of degree
  at most $i$, and so there is a functor $\matchcat{\bD^{n}}{\obDn}
  \to \cat C'_{i+1}$ that takes the object $\bigl([p_{1}], [p_{2}],
  \ldots, [p_{n}]\bigr) \to \bigl([q_{1}], [q_{2}], \ldots,
  [q_{n}]\bigr)$ to the composition $\bigl([k], [k], \ldots, [k]\bigr)
  \to \bigl([p_{1}], [p_{2}], \ldots, [p_{n}]\bigr) \to \bigl([q_{1}],
  [q_{2}], \ldots, [q_{n}]\bigr)$; this induces a map $\lim_{\cat
    C'_{i+1}} \to \lim_{\matchcat{\bD^{n}}{\obDn}} \diag X$ that is
  the projection of the right hand vertical map onto the factor
  indexed by $\obUC$.  We thus have a commutative square as in
  \diagref{diag:pbCtoCprime}.

  The objects of $\cat C_{i+1}$ are the objects of $\cat C'_{i+1}$
  together with the elements of $T_{i+1}$, and so a map to $\lim_{\cat
    C_{i+1}} \diag X$ is determined by a map to $\lim_{\cat C'_{i+1}}
  \diag X$ and a map to $\prod_{(\obUC)\in T_{i+1}} \diag X_{\obDn}$.
  Since there are no non-identity morphisms in $\cat C_{i+1}$ with
  codomain an element of $T_{i+1}$, and the only non-identity
  morphisms with domain an element $\obUC$ of $T_{i+1}$ are the
  objects of the matching category $\matchcat{\bD^{n}}{\obDn}$, maps
  to $\lim_{\cat C'_{i+1}} \diag X$ and to $\prod_{(\obUC)\in T_{i+1}}
  X_{\obDn}$ determine a map to $\lim_{\cat C_{i+1}} \diag X$ if and
  only if their compositions to $\prod_{(\obUC)\in T_{i+1}}
  \lim_{\matchcat{\bD^{n}}{\obDn}} \diag X$ agree.  Thus, the diagram
  is a pullback square.
\end{proof}

Define $Q$ and $R$ by letting the squares
\begin{equation}
  \label{diag:pullbacks}
  \vcenter{
    \xymatrix{
      {Q} \ar@{..>}[r] \ar@{..>}[d]
      & {\lim_{\cat C'_{i+1}} \diag X} \ar[d]\\
      {\lim_{\cat C_{i+1}} \diag Y} \ar[r]
      & {\lim_{\cat C'_{i+1}} \diag Y}
    }
  }
  \qquad\text{and}\quad
  \vcenter{
    \xymatrix{
      {R} \ar@{..>}[r] \ar@{..>}[d]
      & {\prod_{(\obUC)\in T_{i+1}}
        \lim_{\matchcat{\bD^{n}}{\obDn}} \diag X} \ar[d]\\
      {\prod_{(\obUC)\in T_{i+1}}
        \hspace{-2em}\diag Y_{\obDn}} \ar[r]
      & {\prod_{(\obUC)\in T_{i+1}}
        \lim_{\matchcat{\bD^{n}}{\obDn}} \diag Y}
    }
  }
\end{equation}
be pullbacks, and consider the commutative diagram
\begin{equation}
  \label{diag:bigcube}
  \vcenter{
    \xymatrix@=.8em@R=4ex{
      {\lim_{\cat C_{i+1}} \diag X} \ar[rrr]^{s} \ar[drr]^(.7){a}
      \ar[ddr]_{\delta} \ar[ddd]_{u}
      &&& {\lim_{\cat C'_{i+1}} \diag X} \ar[ddr]^{\beta}
      \ar'[dd][ddd]^{v}\\
      && {Q} \ar[ur]_{c} \ar[dl]_{d} \ar'[d][ddd]^(.3){g}\\
      & {\lim_{\cat C_{i+1}}\diag Y} \ar[rrr]^(.7){s'}
      \ar[ddd]_(.75){u'}
      &&& {\lim_{\cat C'_{i+1}}\diag Y} \ar[ddd]^{v'}\\
      {\prod_{(\obUC)\in T_{i+1}}
        \hspace{-2em}\diag X_{\obDn}} \ar[ddr]_{\gamma}
      \ar[drr]|!{[ru];[rdd]}{\hole}^(.75){b}
      \ar[rrr]|!{[ruu];[rd]}{\hole}^{t}
      |!{[uurr];[rrd]}{\hole}
      &&& {\hspace{-1em}\prod_{\substack{\phantom{X}\\(\obUC)\in T_{i+1}}}
        \hspace{-2em}\lim_{\matchcat{\bD^{n}}{\obDn}}\diag X} \ar[ddr]\\
      && {R} \ar[ur]_-{e}  \ar[dl]^(.3){f}\\
      & {\hspace{-2em}\prod_{(\obUC)\in T_{i+1}}
        \hspace{-2em}\diag Y_{\obDn}} \ar[rrr]_-{t'}
      &&& {\hspace{-2em}\prod_{\substack{\phantom{X}\\(\obUC)\in T_{i+1}}}
        \hspace{-2em}\lim_{\matchcat{\bD^{n}}{\obDn}}\diag Y}
    }
  }
\end{equation}
\lemref{lem:pbCtoCprime} implies that the front and back rectangles
are pullbacks.

\begin{lem}
  \label{lem:PBf}
  The square
  \begin{equation}
    \label{diag:PBf}
    \vcenter{
      \xymatrix{
        {\lim_{\cat C_{i+1}}\diag X} \ar[r]^-{a} \ar[d]_{u}
        & {Q} \ar[d]^{g}\\
        {\prod_{(\obUC)\in T_{i+1}}
          \hspace{-2em}\diag X_{\obDn}} \ar[r]_-{b}
        & {R}
      }
    }
  \end{equation}
  is a pullback.
\end{lem}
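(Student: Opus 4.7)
The plan is to check the universal property of \eqref{diag:PBf} directly, by chasing around the cube \eqref{diag:bigcube}. Three pullback squares are already available: the top and front horizontal rectangles of \eqref{diag:bigcube} are pullbacks by Lemma~\ref{lem:pbCtoCprime} (applied to $\diag X$ and to $\diag Y$, respectively), and the squares defining $Q$ and $R$ in \eqref{diag:pullbacks} are pullbacks by construction. Modulo these three universal properties, no further information about the categories $\cat C_{i+1}$ and $\cat C'_{i+1}$ is needed.

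Given a test object $Z$ with maps $\xi \colon Z \to \prod_{(\obUC)\in T_{i+1}} \diag X_{\obDn}$ and $\eta \colon Z \to Q$ satisfying $b\xi = g\eta$, I first unpack $\eta$ via the pullback defining $Q$ into a pair $c\eta \colon Z \to \lim_{\cat C'_{i+1}} \diag X$ and $d\eta \colon Z \to \lim_{\cat C_{i+1}} \diag Y$ with $\beta\, c\eta = s'\, d\eta$. Composing the hypothesis $b\xi = g\eta$ with the projection $e \colon R \to \prod_{(\obUC)\in T_{i+1}} \lim_{\matchcat{\bD^{n}}{\obDn}} \diag X$ yields $v\, c\eta = t\xi$, so $(c\eta, \xi)$ is a cone over the top face of the cube. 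Lemma~\ref{lem:pbCtoCprime} applied to $\diag X$ then produces a unique $\phi \colon Z \to \lim_{\cat C_{i+1}} \diag X$ with $s\phi = c\eta$ and $u\phi = \xi$.

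The equality $u\phi = \xi$ is by construction; verifying $a\phi = \eta$ is the remaining content. By the pullback property of $Q$, it suffices to show $c(a\phi) = c\eta$ and $d(a\phi) = d\eta$. The first follows from $ca = s$ and $s\phi = c\eta$. For the second I apply Lemma~\ref{lem:pbCtoCprime} to $\diag Y$: two maps into $\lim_{\cat C_{i+1}} \diag Y$ agree provided their compositions with $s'$ and $u'$ do. Agreement after $s'$ follows from the first equality together with the cube identity $s'\delta = \beta s$. Agreement after $u'$ is extracted by projecting $b\xi = g\eta$ along $f \colon R \to \prod_{(\obUC)\in T_{i+1}} \diag Y_{\obDn}$: the identities $fg = u'd$ and $fb = \gamma$ yield the chain $u'\, d\eta = fg\eta = fb\xi = \gamma\xi = \gamma u\phi = u'\delta\phi = u'(d\, a\phi)$. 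Uniqueness of $\phi$ then runs the same argument in reverse through the universal property of the top face.

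The argument is essentially bookkeeping: no ingredients beyond Lemma~\ref{lem:pbCtoCprime} and the pullback definitions of $Q$ and $R$ are required. The main obstacle is keeping careful track of which face of the cube each commutative triangle lives on, especially in the $u'$-compatibility step, which traverses both halves of the cube and is the only place where the pullback square defining $R$ is used in its second coordinate.
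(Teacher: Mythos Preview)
Your argument is correct and follows essentially the same route as the paper: both verify the universal property of \eqref{diag:PBf} by hand, using the pullback squares for $\diag X$ and $\diag Y$ from \lemref{lem:pbCtoCprime} together with the defining pullbacks \eqref{diag:pullbacks} for $Q$ and $R$, and the key step in both is checking $d(a\phi)=d\eta$ via the two projections $s'$ and $u'$ of the front rectangle. Your naming of the faces (``top'' and ``front'') differs from the paper's (``back'' and ``front''), but the content is identical.
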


\begin{proof}
  Let $W$ be an object of $\cat M$ and let $h\colon W \to
  \prod_{(\obUC)\in T_{i+1}} \diag X_{\obDn}$ and $k\colon W \to Q$ be
  maps such that $gk = bh$; we will show that there is a unique map
  $\phi\colon W \to \lim_{\cat C_{i+1}} \diag X$ such that $a\phi = k$
  and $u\phi = h$.
  \begin{displaymath}
    \xymatrix{
      {W} \ar@/^3ex/[drr]^{k} \ar@/_3ex/[ddr]_{h}
      \ar@{..>}[dr]^{\phi}\\
      &{\lim_{\cat C_{i+1}} \diag X} \ar[r]^-{a} \ar[d]_{u}
      & {Q} \ar[d]^{g}\\
      &{\prod_{(\obUC)\in T_{i+1}}
        \hspace{-2em}\diag X_{\obDn}} \ar[r]_-{b}
      & {R}
    }
  \end{displaymath}
  The map $ck\colon W \to \lim_{\cat C'_{i+1}} \diag X$ has the
  property that $v(ck) = egk = ebh = th$, and since the back rectangle
  of \diagref{diag:bigcube} is a pullback, the maps $ck$ and $h$
  induce a map $\phi\colon W \to \lim_{\cat C_{i+1}} \diag X$ such
  that $u\phi = h$ and $s\phi = ck$.  We must show that $a\phi = k$,
  and since $Q$ is a pullback as in \diagref{diag:pullbacks}, this is
  equivalent to showing that $ca\phi = ck$ and $da\phi = dk$.

  Since $ck = s\phi = ca\phi$, we need only show that $da\phi = dk$.
  Since the front rectangle of \diagref{diag:bigcube} is a pullback,
  it is sufficient to show that $s'da\phi = s'dk$ and $u'da\phi =
  u'dk$.  For the first of those, we have
  \begin{displaymath}
    s'da\phi = s'\delta\phi = \beta s\phi = \beta ck = s'dk\\
  \end{displaymath}
  and for the second, we have
  \begin{displaymath}
    u'da\phi = u'\delta\phi = \gamma u\phi = fbu\phi = fbh = fgk =
    u'dk
  \end{displaymath}
  and so the map $\phi$ satisfies $a\phi = k$ and $u\phi = h$.

  To see that $\phi$ is the unique such map, let $\psi\colon W \to
  \lim_{\cat C_{i+1}} \diag X$ be another map such that $a\psi = k$
  and $u\psi = h$.  We will show that $s\psi = s\phi$ and $u\psi =
  u\phi$; since the back rectangle of \diagref{diag:bigcube} is a
  pullback, this will imply that $\psi = \phi$.

  Since $u\psi = h = u\phi$, we need only show that $s\psi = s\phi$,
  which follows because $s\psi = ca\psi = ck = s\phi$.
\end{proof}

\begin{lem}
  \label{lem:lastfib}
  If $\diag X \to \diag Y$ is a fibration of $n$-cosimplicial objects,
  then the natural map
  \begin{displaymath}
    \lim_{\cat C_{i+1}} \diag X \longrightarrow
    Q = \pullback{\lim_{\cat C'_{i+1}}\diag X}
    {(\lim_{\cat C'_{i+1}}\diag Y)}{\lim_{\cat C_{i+1}}\diag Y}
  \end{displaymath}
  is a fibration.
\end{lem}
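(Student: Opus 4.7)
The plan is to reduce \lemref{lem:lastfib} to the fact that $\diag X \to \diag Y$ is a Reedy fibration by exploiting the pullback square of \lemref{lem:PBf}. By that lemma, the map $a\colon \lim_{\cat C_{i+1}}\diag X \to Q$ is the pullback of the bottom horizontal map
\[
b\colon \prod_{(\obUC)\in T_{i+1}} \diag X_{\obDn} \longrightarrow R
\]
along $g\colon Q \to R$. Since fibrations are stable under pullback, it suffices to show that $b$ itself is a fibration.

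Next I would unwind the definition of $R$. Because $R$ is the pullback of $\prod_{(\obUC)\in T_{i+1}} \diag Y_{\obDn}$ and $\prod_{(\obUC)\in T_{i+1}} \lim_{\matchcat{\bD^{n}}{\obDn}} \diag X$ over $\prod_{(\obUC)\in T_{i+1}} \lim_{\matchcat{\bD^{n}}{\obDn}} \diag Y$, the universal property identifies $b$ with the product, indexed by the elements $(\obUC) \in T_{i+1}$, of the relative matching maps
\[
\diag X_{\obDn} \longrightarrow \diag Y_{\obDn} \mathbin{\times_{\match_{\obDn}^{\bD^{n}} \diag Y}} \match_{\obDn}^{\bD^{n}} \diag X.
\]
(Both the pullback of a product and a product of pullbacks agree, so this identification is straightforward.)

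Now I would invoke the hypothesis: since $\diag X \to \diag Y$ is a Reedy fibration of $n$-cosimplicial objects, each of these relative matching maps is a fibration in $\cat M$ by \defref{def:RFib}. Fibrations in any model category are characterized by a right lifting property and so are closed under arbitrary products; hence $b$ is a fibration. Pulling back along $g$ gives that $a$ is a fibration, which is the conclusion of the lemma.

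The only step that requires any care is verifying that the map $b$ really is the product of the relative matching maps at the objects $\obDn$; this amounts to checking that the two presentations of $R$ — as a pullback of products and as a product of pullbacks — coincide, which is immediate from the universal property of limits. Everything else is formal: one pullback lemma and the stability of fibrations under pullback and product.
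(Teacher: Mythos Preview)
Your argument is correct and follows exactly the paper's approach: invoke \lemref{lem:PBf} to identify $a$ as a pullback of $b$, recognize $b$ as a product of relative matching maps (each a fibration by the Reedy hypothesis), and conclude by stability of fibrations under products and pullback. The paper is simply more terse about the identification of $R$ with a product of pullbacks, but the logic is the same.
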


\begin{proof}
  \lemref{lem:PBf} gives us the pullback square in \diagref{diag:PBf},
  where $Q$ and $R$ are defined by the pullbacks in
  \diagref{diag:pullbacks}.  Since $\diag X \to \diag Y$ is a
  fibration of $n$-cosimplicial objects, the map $\prod_{(\obUC)\in
    T_{i+1}} \diag X_{\obDn} \to R$ is a product of fibrations and is
  thus a fibration, and so the map $\lim_{\cat C_{i+1}}\diag X \to Q =
  \pullback{\lim_{\cat C'_{i+1}}\diag X} {(\lim_{\cat C'_{i+1}}\diag
    Y)}{\lim_{\cat C_{i+1}}\diag Y}$ is a pullback of a fibration and
  is thus a fibration.
\end{proof}

\begin{lem}[Reedy]
  \label{lem:reedy}
  If both the front and back squares in the diagram
  \begin{displaymath}
    \xymatrix@=1.5em{
      {A} \ar[rr] \ar[dr]^{f_{A}} \ar[dd]
      && {B} \ar[dr]^{f_{B}} \ar'[d][dd]\\
      & {A'} \ar[rr] \ar[dd]
      && {B'} \ar[dd]\\
      {C} \ar[dr]_{f_{C}} \ar'[r][rr]
      && {D} \ar[dr]^{f_{D}}\\
      & {C'} \ar[rr]
      && {D'}
    }
  \end{displaymath}
  are pullbacks and both $f_{B}\colon B \to B'$ and $C \to
  \pullback{C'}{D'}{D}$ are fibrations, then $f_{A}\colon A \to A'$ is
  a fibration.
\end{lem}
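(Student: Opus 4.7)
The plan is to factor $f_A$ as a composite of two manifest fibrations, via the auxiliary pullback $P := A' \times_{B'} B$. Since the cube commutes, the maps $f_A \colon A \to A'$ and $A \to B$ have the same composite into $B'$, so they induce a canonical map $\phi \colon A \to P$, giving a factorization $A \xrightarrow{\phi} P \xrightarrow{\pi} A'$ of $f_A$, where $\pi$ is the projection to $A'$.

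The first factor $\pi$ is immediate: it is the pullback of $f_B$ along $A' \to B'$, and $f_B$ is a fibration by hypothesis. For the second factor, I would use the front-square pullback hypothesis, $A' \cong B' \times_{D'} C'$, together with a routine manipulation of iterated pullbacks, to identify
\begin{displaymath}
P = A' \times_{B'} B \;\cong\; B \times_{D'} C' \;\cong\; B \times_D (C' \times_{D'} D) \Period
\end{displaymath}
Combining this with the back-square identification $A \cong B \times_D C$, the map $\phi$ becomes the map $B \times_D C \to B \times_D (C' \times_{D'} D)$ induced by the relative matching map $C \to C' \times_{D'} D$. In other words, $\phi$ is the pullback of the assumed fibration $C \to C' \times_{D'} D$ along $B \to D$, and is therefore itself a fibration. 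Hence $f_A = \pi \circ \phi$ is a composite of fibrations, as required.

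The only substantive check is the identification of $\phi$ as such a pullback, which is where both the front-square pullback hypothesis and the hypothesis on the relative matching map enter essentially; the remainder is formal bookkeeping with iterated pullbacks, so the hard part --- to the extent there is one --- is just verifying that the two descriptions of $P$ and of $\phi$ agree on the nose.
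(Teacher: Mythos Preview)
Your argument is correct: the factorization $A \to A'\times_{B'}B \to A'$ and the identification $A'\times_{B'}B \cong B\times_{D}(C'\times_{D'}D)$ do exactly what you claim, and the checks you flag as ``routine bookkeeping'' go through. The paper does not actually prove this lemma; it simply cites \cite{MCATL}*{Lem.~7.2.15 and Rem.~7.1.10} as the dual statement, so your write-up supplies strictly more detail than the paper. What you have written is in fact the standard proof of Reedy's lemma, so there is no meaningful divergence of approach to discuss.
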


\begin{proof}
  This is the dual of a lemma of Reedy (see
  \cite{MCATL}*{Lem.~7.2.15 and Rem.~7.1.10}).
\end{proof}

\begin{proof}[Proof of \propref{prop:IndFibr}]
  We have a commutative diagram
  \begin{displaymath}
    \xymatrix@=.25em{
      {P_{i+1}} \ar[rr] \ar[dr] \ar[dd]
      && {Y^{(k,k,\ldots,k)}} \ar[dr] \ar'[d][dd]\\
      & {P'_{i+1}} \ar[rr] \ar[dd]
      && {Y^{(k,k,\ldots,k)}} \ar[dd]\\
      {\lim_{\cat C_{i+1}}\diag X} \ar[dr] \ar'[r][rr]
      && {\lim_{\cat C_{i+1}}\diag Y} \ar[dr]\\
      & {\lim_{\cat C'_{i+1}}\diag X} \ar[rr]
      && {\lim_{\cat C'_{i+1}}\diag Y}
    }
  \end{displaymath}
  in which the front and back squares are pullbacks, and so
  \lemref{lem:reedy} implies that it is sufficient to show that the
  map
  \begin{displaymath}
    \lim_{\cat C_{i+1}} \diag X \longrightarrow
    \pullback{\lim_{\cat C'_{i+1}}\diag X}
    {(\lim_{\cat C'_{i+1}}\diag Y)}{\lim_{\cat C_{i+1}}\diag Y}
  \end{displaymath}
  is a fibration; that is the statement of \lemref{lem:lastfib}.
\end{proof}

\section{Frames, homotopy cotensors, and homotopy function complexes}
\label{sec:frames}

If $\diag X$ is a cosimplicial simplicial set, then both the total
space $\Tot\diag X$ and the homotopy limit $\holim\diag X$ are built
from simplicial sets of the form $(\diag X^{n})^{K}$ for $n \ge 0$ and
$K$ a simplicial set, where $(\diag X^{n})^{K}$ is the simplicial set
of maps $K \to \diag X^{n}$.  If $\cat M$ is a simplicial model
category and $\diag X$ is a cosimplicial object in $\cat M$, then the
simplicial model category structure includes objects $(\diag
X^{n})^{K}$ of $\cat M$, called the \emph{cotensor} of $\diag X^{n}$
and $K$, and $\Tot\diag X$ and $\holim\diag X$ are built from those.
When $\cat M$ is a (possibly non-simplicial) model category and $\diag
X$ is a cosimplicial object in $\cat M$, we need objects of $\cat M$
that can play the role of the $(\diag X^{n})^{K}$, and to define those
we choose a \emph{simplicial frame} on the objects of $\cat M$.  A
simplicial frame on an object $X$ of $\cat M$ is a simplicial object
$\diXhat$ of $\cat M$ such that
\begin{itemize}
\item $\diXhat_{0}$ is isomorphic to $X$,
\item all the face and degeneracy operators of $\diXhat$ are weak
  equivalences, and
\item if $X$ is a fibrant object of $\cat M$ then $\diXhat$ is a Reedy
  fibrant simplicial object.
\end{itemize}
Given such a simplicial frame on $X$, we use $\diXhat_{n}$ to play the
role of $X^{\Delta[n]}$, and if $K$ is a simplicial set we construct
the \emph{homotopy cotensor} $\diXhat^{K}$ of $X$ and $K$ as the limit
of a diagram of the $\diXhat_{n}$ indexed by the opposite of the
category of simplices of $K$ (see \defref{def:realization}).

\subsection{Frames}

\begin{defn}[Frames and Reedy frames]
  \label{def:frame}
  \leavevmode
  \begin{enumerate}
  \item If $\cat M$ is a model category and $X$ is an object of $\cat
    M$, then a \emph{simplicial frame} on $X$ is a simplicial object
    $\diXhat$ together with an isomorphism $X \iso \diXhat_{0}$ such
    that
    \begin{itemize}
    \item all the face and degeneracy operators of $\diXhat$ are weak
      equivalences and
    \item if $X$ is a fibrant object of $\cat M$ then $\diXhat$ is a
      Reedy fibrant simplicial object in $\cat M$.
    \end{itemize}
    Equivalently,a simplicial frame on $X$ is a simplicial object
    $\diXhat$ in $\cat M$ together with a weak equivalence $\cs X \to
    \diXhat$ (where $\cs X$ is the constant simplicial object on $X$)
    in the Reedy model category structure on $\cat M^{\bD\op}$ such
    that
    \begin{itemize}
    \item the induced map $X \to \diXhat_{0}$ is an isomorphism, and
    \item if $X$ is a fibrant object of $\cat M$, then $\diXhat$ is a
      Reedy fibrant simplicial object.
    \end{itemize}
    We often refer to $\diXhat$ as a simplicial frame on $X$, without
    explicitly mentioning the map $\cs X \to \diXhat$.

    If $\cat M$ is a simplicial model category and $X$ is an object of
    $\cat M$, the \emph{standard simplicial frame on $X$} is the
    simplicial object $\diXhat$ in which $\diXhat_{n} = X^{\Delta[n]}$
    (see \cite{MCATL}*{Prop.~16.6.4}).
  \item If $\cat M$ is a model category, $\cat C$ is a small category,
    and $\diag X$ is a $\cat C$-diagram in $\cat M$, then a
    \emph{simplicial frame on $\diag X$} is a $\cat C$-diagram
    $\diXhat$ of simplicial objects in $\cat M$ together with a map of
    diagrams $j\colon \cs \diag X \to \diXhat$ from the diagram of
    constant simplicial objects such that, for every object $\alpha$
    of $\cat C$, the map $j_{\alpha}\colon \cs\diag X_{\alpha} \to
    \diXhat_{\alpha}$ is a simplicial frame on $\diag X_{\alpha}$.

    If $\cat M$ is a simplicial model category, then the
    \emph{standard simplicial frame} on $\diag X$ is the frame
    $\diXhat$ on $\diag X$ such that $\diXhat_{\alpha}$ is the
    standard simplicial frame on $\diag X_{\alpha}$ for every object
    $\alpha$ of $\cat C$.
  \item If $\cat M$ is a model category, $\cat C$ is a Reedy category,
    and $\diag X$ is a $\cat C$-diagram in $\cat M$, then a
    \emph{Reedy simplicial frame on $\diag X$} is a simplicial frame
    $\diXhat$ on $\diag X$ such that if $\diag X$ is a Reedy fibrant
    $\cat C$-diagram in $\cat M$, then $\diXhat$ is a Reedy fibrant
    $\cat C$-diagram in $\cat M^{\bD\op}$.

    If $\cat M$ is a simplicial model category and $\cat C$ is a Reedy
    category, then for any $\cat C$-diagram $\diag X$ in $\cat M$ the
    standard simplicial frame $\diXhat$ on $\diag X$ (see
    \defref{def:frame}) is a Reedy simplicial frame (see
    \cite{MCATL}*{Prop.~16.7.9}).
  \end{enumerate}
\end{defn}

\begin{prop}[Existence and essential uniqueness of frames]
  \label{prop:frameunique}
  \leavevmode
  \begin{enumerate}
  \item If $\cat M$ is a model category and $X$ is an object of $\cat
    M$, then there exists a simplicial frame on $X$ and any two
    simplicial frames on $X$ are connected by an essentially unique
    zig-zag (see \cite{MCATL}*{Def.~14.4.2}) of weak equivalences of
    simplicial frames on $X$.
  \item If $\cat M$ is a model category, $\cat C$ is a small category,
    and $\diag X$ is a $\cat C$-diagram in $\cat M$, then there exists
    a simplicial frame on $\diag X$ and any two simplicial frames on
    $\diag X$ are connected by an essentially unique zig-zag (see
    \cite{MCATL}*{Def.~14.4.2}) of weak equivalences of simplicial
    frames on $\diag X$.
  \item If $\cat M$ is a model category, $\cat C$ is a Reedy category,
    and $\diag X$ is a $\cat C$-diagram in $\cat M$, then there exists
    a Reedy simplicial frame on $\diag X$ and any two Reedy simplicial
    frames on $\diag X$ are connected by an essentially unique zig-zag
    (see \cite{MCATL}*{Def.~14.4.2}) of weak equivalences of Reedy
    simplicial frames on $\diag X$.
  \item If $\cat M$ is a model category and $\cat C$ is a Reedy
    category, then there exists a functorial Reedy simplicial frame on
    every $\cat C$-diagram on $\cat M$.
  \end{enumerate}
\end{prop}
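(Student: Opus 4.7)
The plan is to establish (1) by an inductive construction using the Reedy model structure on $\cat M^{\bD\op}$, and then to obtain (2)--(4) by applying the same template in the appropriate enlarged diagram category, with functoriality in (4) secured by the small object argument. Both existence and essential uniqueness fall out of standard Reedy factorization and lifting arguments.

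For existence in (1), I would build $\diXhat$ by induction on simplicial degree. At degree $0$, set $\diXhat_{0} = X$, so the required isomorphism is the identity. At each degree $n \ge 1$, the constant simplicial object $\cs X$ has $n$th matching object $X$ and $n$th latching object $X$, and the partially constructed $\diXhat$ has a latching object $L_{n}\diXhat$ equipped with a map $L_{n}\diXhat \to X$ induced by the map $\cs X \to \diXhat$ already built below degree $n$. I would factor this latching map as a cofibration followed by a trivial fibration in $\cat M$ (or, if $X$ is fibrant, as a trivial cofibration followed by a fibration), and take $\diXhat_{n}$ to be the intermediate object. Two-out-of-three then forces every face and degeneracy map of $\diXhat$ to be a weak equivalence, and when $X$ is fibrant this arrangement is exactly the Reedy fibrant replacement of $\cs X$ relative to degree $0$.

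For uniqueness in (1), given two frames $\diXhat$ and $\diXhat'$ on $X$, I would first reduce to the case that $X$ is fibrant by framing a fibrant replacement of $X$ and comparing. With $X$ fibrant, $\diXhat'$ is Reedy fibrant in $\cat M^{\bD\op}$, and the structure map $\cs X \to \diXhat$ is a Reedy trivial cofibration (it is a levelwise weak equivalence and the relative latching maps are cofibrations by construction). Hence the map $\cs X \to \diXhat'$ lifts across $\diXhat \to *$ to a map $\diXhat \to \diXhat'$ of frames, which is a weak equivalence by two-out-of-three. Essential uniqueness of the connecting zig-zag then follows from the standard homotopy-uniqueness-of-lifts argument in a model category.

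Parts (2)--(4) proceed by the same template applied in a larger diagram category. For (2), I would carry out the construction pointwise in $\cat C$, using naturality of the chosen factorizations to obtain a $\cat C$-diagram of simplicial objects; for (3), I would work inside the Reedy model structure on $(\cat M^{\bD\op})^{\cat C}$ so that Reedy fibrancy of $\diag X$ transports to Reedy fibrancy of $\diXhat$ as a $\cat C$-diagram of simplicial objects; for (4), I would use the small object argument to make the factorizations functorial at every inductive step. The main obstacle, and the reason one cannot simply quote a Reedy fibrant replacement of $\cs X$ or of $\cs\diag X$, is keeping the degree-$0$ piece literally isomorphic to $X$ (resp.\ literally equal to the original diagram at each object of $\cat C$) rather than only weakly equivalent to it; this is what forces the induction to start by prescribing $\diXhat_{0} = X$ and only then factoring at higher degrees. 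Full details in all four cases are carried out in \cite{MCATL}*{\S\S 16.6--16.7}.
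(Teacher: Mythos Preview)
The paper's own proof is a one-line citation to \cite{MCATL}*{Thm.~16.6.18, Thm.~16.7.6, Prop.~16.7.11, and Thm.~16.7.14}, so your closing reference to \cite{MCATL}*{\S\S 16.6--16.7} already matches it exactly.  The sketch you add on top of that citation is in the right spirit (Reedy-inductive construction, then lifting for uniqueness), but two of its details are wrong as written.

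In the existence argument, the map you propose to factor does not exist with the stated direction or target.  The map $\cs X \to \diXhat$ built below degree $n$ induces $X = L_{n}(\cs X) \to L_{n}\diXhat$, not $L_{n}\diXhat \to X$.  What one actually factors at stage $n$ is the canonical map $L_{n}\diXhat \to M_{n}\diXhat$ (the relative latching object $X \amalg_{L_{n}(\cs X)} L_{n}\diXhat$ collapses to $L_{n}\diXhat$ because $L_{n}(\cs X)\to (\cs X)_{n}$ is the identity on $X$), and one factors it as a trivial cofibration followed by a fibration.  The target is the matching object $M_{n}\diXhat$, not $X$.

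In the uniqueness argument, you assert that for an arbitrary frame $\diXhat$ the map $\cs X \to \diXhat$ is a Reedy trivial cofibration ``by construction,'' but the definition of a simplicial frame imposes no condition on relative latching maps, so this need not hold for a frame you did not build yourself.  Your lifting argument therefore only produces a comparison map out of the specific frame constructed in the existence step, not between two arbitrary frames; one must zig-zag through that constructed frame.  Moreover, ``essentially unique zig-zag'' in the sense of \cite{MCATL}*{Def.~14.4.2} is a contractibility statement about the category of frames, which is stronger than merely exhibiting one connecting zig-zag and requires the finer argument carried out in the cited sections.
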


\begin{proof}
  See \cite{MCATL}*{Thm.~16.6.18, Thm.~16.7.6, Prop.~16.7.11, and
    Thm.~16.7.14}.
\end{proof}

\subsection{Homotopy cotensors}
\label{sec:cotensors}

\begin{defn}
  \label{def:realization}
  If $\cat M$ is a model category, $X$ is an object of $\cat M$,
  $\diXhat$ is a simplicial frame on $X$, and $K$ is a simplicial set,
  then the \emph{homotopy cotensor} $\diXhat^{K}$ is defined to be the
  object of $\cat M$ that is the limit of the $(\Delta K)\op$-diagram
  in $\cat M$ (see \lemref{lem:ColimCtSmp}) that takes the object
  $\Delta[n] \to K$ of $(\Delta K)\op = \overcat{\bD}{K}\op$ to
  $\diXhat_{n}$ and takes the commutative triangle
  \begin{displaymath}
    \xymatrix{
      {\Delta[n]} \ar[rr]^{\alpha} \ar[dr]
      && {\Delta[k]} \ar[dl]\\
      & {K}
    }
  \end{displaymath}
  to the map $\alpha^{*}\colon \diXhat_{k} \to \diXhat_{n}$ (see
  \cite{MCATL}*{Def.~16.3.1}).
\end{defn}

\begin{prop}
  \label{prop:SMCreal}
  If $\cat M$ is a simplicial model category, $X$ is an object of
  $\cat M$, $\diXhat$ is the standard simplicial frame on $X$ (see
  \defref{def:frame}), and $K$ is a simplicial set, then $\diXhat^{K}$
  is naturally isomorphic to $X^{K}$.
\end{prop}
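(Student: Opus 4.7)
The plan is to exhibit both $X^{K}$ and $\diXhat^{K}$ as canonically isomorphic limits in $\cat M$. The decisive ingredient is \lemref{lem:ColimCtSmp}, which expresses $K$ as the colimit over the category of simplices $\Delta K$ of the diagram that sends each $n$-simplex $\sigma\colon \Delta[n] \to K$ to the standard simplex $\Delta[n]$.

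First I would invoke the simplicial model category axioms: the cotensor is part of a two-variable adjunction, so for each fixed $X$ the functor $X^{(-)}\colon \SiS\op \to \cat M$ carries colimits in $\SiS$ to limits in $\cat M$. Applied to the presentation of $K$ from \lemref{lem:ColimCtSmp}, this yields a natural isomorphism
\begin{displaymath}
  X^{K} \iso \lim_{(\Delta K)\op} X^{\Delta[n]}
\end{displaymath}
where an $n$-simplex $\Delta[n] \to K$ contributes the factor $X^{\Delta[n]}$ and a morphism of simplices $\alpha\colon \Delta[n] \to \Delta[k]$ over $K$ contributes the structure map $\alpha^{*}\colon X^{\Delta[k]} \to X^{\Delta[n]}$.

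Next I would match this against \defref{def:realization}. By definition of the standard simplicial frame, $\diXhat_{n} = X^{\Delta[n]}$, and its simplicial operators are the maps $\alpha^{*}\colon X^{\Delta[k]} \to X^{\Delta[n]}$ induced by simplicial operators $\alpha$. Thus the $(\Delta K)\op$-diagram that defines $\diXhat^{K}$ agrees term-by-term and morphism-by-morphism with the diagram whose limit is $X^{K}$ above. Passing to limits produces the asserted isomorphism $\diXhat^{K} \iso X^{K}$, and naturality in $X$ is automatic because every step (\lemref{lem:ColimCtSmp}, the adjunction, and the formula $\diXhat_{n} = X^{\Delta[n]}$) is natural.

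The main obstacle is not computational but bookkeeping: one must verify that the structure maps prescribed in \defref{def:realization} coincide under $\diXhat_{n} = X^{\Delta[n]}$ with the maps that the cotensor functor assigns to the simplicial operators $\alpha$. This is immediate from the definition of the standard simplicial frame in a simplicial model category, so no genuine additional work is required beyond writing down the comparison.
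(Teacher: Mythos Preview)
Your argument is correct: expressing $K$ as the colimit of standard simplices via \lemref{lem:ColimCtSmp}, using that $X^{(-)}$ carries colimits to limits, and then identifying the resulting $(\Delta K)\op$-diagram with the one in \defref{def:realization} gives exactly the claimed natural isomorphism. The paper itself does not prove this proposition in-line but simply cites \cite{MCATL}*{Prop.~16.6.6}; your write-up is essentially a self-contained unpacking of that citation, so the approaches agree in substance.
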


\begin{proof}
  See \cite{MCATL}*{Prop.~16.6.6}.
\end{proof}

\subsection{Homotopy function complexes}
\label{sec:HoFncCmp}

Although homotopy function complexes have many important properties
(see \cite{MCATL}*{Chap.~17}), our only use for them here is the
adjointness result \thmref{thm:EndAdj}, which will be used in the
proofs of \thmref{thm:IsoTot} and \thmref{thm:holimdiag}.

\begin{defn}
  \label{def:MpSmpSt}
  Let $\cat M$ be a model category and let $W$ be an object of $\cat
  M$.
  \begin{enumerate}
  \item If $X$ is an object of $\cat M$ and $\diXhat$ is a simplicial
    frame on $X$, then $\map_{\diXhat}(W, X)$ will denote the
    simplicial set, natural in both $W$ and $\diXhat$, defined by
    \begin{displaymath}
      \map_{\diXhat}(W,X)_{n} = \cat M(W, \diXhat_{n})
    \end{displaymath}
    with face and degeneracy maps induced by those in $\diXhat$.  If
    $W$ is cofibrant and $X$ is fibrant, then $\map_{\diXhat}(W,X)$ is
    a right homotopy function complex from $W$ to $X$ (see
    \cite{MCATL}*{Def.~17.2.1}).
  \item If $\cat C$ is a small category, $\diag X$ is a $\cat
    C$-diagram in $\cat M$, and $\diXhat$ is a simplicial frame on
    $\diag X$, then $\map_{\diXhat}(W, \diag X)$ will denote the $\cat
    C$-diagram of simplicial sets that on an object $\alpha$ of $\cat
    C$ is the simplicial set $\map_{\diXhat}(W, \diag X_{\alpha})$.
  \end{enumerate}
\end{defn}

\begin{prop}
  \label{prop:MpSpc}
  If $\cat M$ is a simplicial model category, $W$ and $X$ are objects
  of $\cat M$, and $\diXhat$ is the standard simplicial frame on $X$,
  then $\map_{\diXhat}(W,X)$ is naturally isomorphic to $\Map(W,X)$,
  the simplicial set of maps that is part of the structure of the
  simplicial model category $\cat M$.
\end{prop}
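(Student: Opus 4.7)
The plan is to unwind the definition of $\map_{\diXhat}(W,X)$ when $\diXhat$ is the standard frame and then apply the defining adjunction of a simplicial model category degreewise.

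First I would recall that by \propref{prop:SMCreal} (and \defref{def:frame}(1)), the standard simplicial frame on $X$ satisfies $\diXhat_{n} = X^{\Delta[n]}$, where $X^{\Delta[n]}$ denotes the cotensor of $X$ with the simplicial set $\Delta[n]$ coming from the simplicial model category structure on $\cat M$. Plugging this into \defref{def:MpSmpSt}(1) gives, for every $n \ge 0$,
\begin{displaymath}
  \map_{\diXhat}(W,X)_{n} = \cat M\bigl(W, X^{\Delta[n]}\bigr).
\end{displaymath}

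Next I would invoke the defining two-variable adjunction of the simplicial model category $\cat M$, which provides a natural isomorphism $\cat M(W, X^{K}) \iso \SiS\bigl(K, \Map(W,X)\bigr)$ for every simplicial set $K$. Specializing to $K = \Delta[n]$ and combining with the Yoneda lemma for simplicial sets, $\SiS\bigl(\Delta[n], \Map(W,X)\bigr) \iso \Map(W,X)_{n}$, we obtain a natural bijection
\begin{displaymath}
  \map_{\diXhat}(W,X)_{n} = \cat M\bigl(W, X^{\Delta[n]}\bigr) \iso
  \Map(W,X)_{n}
\end{displaymath}
for each $n$.

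To finish, I would verify that these bijections assemble into an isomorphism of simplicial sets, i.e., that they respect face and degeneracy operators. The face and degeneracy maps of $\map_{\diXhat}(W,X)$ are, by \defref{def:MpSmpSt}(1), induced by postcomposition with the structure maps of $\diXhat$; under the standard frame these structure maps are exactly the maps $X^{\Delta[n]} \to X^{\Delta[m]}$ induced (contravariantly) by the coface and codegeneracy maps $\Delta[m] \to \Delta[n]$. Under the adjunction, postcomposition with these transport to precomposition with the same maps $\Delta[m] \to \Delta[n]$, which is precisely how the simplicial structure of $\Map(W,X)$ is defined via Yoneda. The naturality in $W$ and $X$ is automatic because each ingredient (the adjunction and Yoneda) is natural. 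Since no step is subtle, there is no real obstacle — the statement is essentially a tautological consequence of the standard frame being the cotensor with $\Delta[\bullet]$ and the simplicial adjunction.
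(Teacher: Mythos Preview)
Your proof is correct and follows essentially the same approach as the paper: unwind the definition using $\diXhat_{n}=X^{\Delta[n]}$, apply the simplicial adjunction $\cat M(W,X^{\Delta[n]})\iso\SiS(\Delta[n],\Map(W,X))$, and identify the result with $\Map(W,X)_{n}$ via Yoneda. You additionally spell out the compatibility with face and degeneracy maps, which the paper leaves implicit in the word ``natural''.
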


\begin{proof}
  We have natural isomorphisms
  \begin{align*}
    \map_{\diXhat}(W,X)_{n} &= \cat M(W, \diXhat_{n})\\
    &= \cat M(W, X^{\Delta[n]})\\
    &\iso \SiS\bigl(\Delta[n], \Map(W,X)\bigr)\\
    &\iso \Map(W,X)_{n} \Period \qedhere
  \end{align*}
\end{proof}

\begin{prop}
  \label{prop:adjointness}
  Let $\cat M$ be a model category and let $W$ be an object of $\cat
  M$.  If $X$ is an object of $\cat M$, $\diXhat$ is a simplicial
  resolution of $X$, and $K$ is a simplicial set, then there is a
  natural isomorphism of sets
    \begin{displaymath}
      \SiS\bigl(K, \map_{\diXhat}(W, X)\bigr) \iso
      \cat M(W, \diXhat^{K}) \Period
    \end{displaymath}
\end{prop}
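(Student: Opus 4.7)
The plan is to identify both sides of the claimed bijection with the same limit indexed by $(\Delta K)\op$, the opposite of the category of simplices of $K$. The adjointness will then be a formal consequence of the definitions of $\diXhat^{K}$ and $\map_{\diXhat}(W,X)$, together with the Yoneda lemma.

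For the right-hand side, Definition~\ref{def:realization} presents $\diXhat^{K}$ as the limit over $(\Delta K)\op$ of the diagram sending each $n$-simplex $\sigma\colon \Delta[n]\to K$ to $\diXhat_{n}$, with the structure map at a morphism $\alpha\colon \Delta[n]\to\Delta[k]$ (over $K$) given by the simplicial operator $\alpha^{*}\colon \diXhat_{k}\to \diXhat_{n}$. Since $\cat M(W,-)$ preserves limits, applying it yields a natural bijection
\begin{equation*}
  \cat M(W, \diXhat^{K}) \iso \lim_{(\Delta K)\op} \cat M(W, \diXhat_{n}).
\end{equation*}

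For the left-hand side, I would invoke \lemref{lem:ColimCtSmp} to write $K$ as the colimit over $\Delta K$ of the diagram sending each $n$-simplex to the standard simplex $\Delta[n]$. Since $\SiS\bigl(-,\map_{\diXhat}(W,X)\bigr)$ converts colimits to limits, this gives
\begin{equation*}
  \SiS\bigl(K,\map_{\diXhat}(W,X)\bigr) \iso
  \lim_{(\Delta K)\op}\SiS\bigl(\Delta[n],\map_{\diXhat}(W,X)\bigr).
\end{equation*}
The Yoneda lemma identifies $\SiS(\Delta[n],\map_{\diXhat}(W,X))$ with the set of $n$-simplices $\map_{\diXhat}(W,X)_{n}$, which by \defref{def:MpSmpSt} is exactly $\cat M(W,\diXhat_{n})$.

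The only real content to check, and the main (though routine) obstacle, is that these two chains of natural identifications assemble the same $(\Delta K)\op$-diagram of sets: a morphism in $\Delta K$ corresponding to a simplicial operator $\alpha\colon \Delta[n]\to\Delta[k]$ must induce the same map $\cat M(W,\diXhat_{k})\to \cat M(W,\diXhat_{n})$ under both presentations. On the cotensor side this map is postcomposition with $\alpha^{*}\colon \diXhat_{k}\to\diXhat_{n}$; on the mapping-complex side it is the map on simplices of $\map_{\diXhat}(W,X)$ induced by $\alpha$, which by the construction in \defref{def:MpSmpSt} is also postcomposition with $\alpha^{*}$. Thus the two limits coincide, and composing the chains of natural isomorphisms produces the asserted bijection, which is manifestly natural in $W$, $\diXhat$, and $K$.
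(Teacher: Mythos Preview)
Your proposal is correct and follows essentially the same approach as the paper. The paper's proof is a one-sentence unwinding: since $\diXhat^{K}$ is defined as a limit over $(\Delta K)\op$, a map $W\to\diXhat^{K}$ is exactly a compatible family of maps $W\to\diXhat_{n}$ indexed by the simplices of $K$, and this is also what a simplicial map $K\to\map_{\diXhat}(W,X)$ amounts to; you have simply made each of these identifications explicit via \lemref{lem:ColimCtSmp} and the Yoneda lemma.
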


\begin{proof}
  Since $\diXhat^{K}$ is defined as a limit (see
  \defref{def:realization}), an element of $\cat M(W, \diXhat^{K})$ is
  a collection of maps $W \to \diXhat_{n}$, one for each $n$-simplex
  of $K$, that commute with the face and degeneracy operators.  This
  is also a description of an element of $\SiS\bigl(K,
  \map_{\diXhat}(W,X)\bigr)$ (see also \cite{MCATL}*{Thm.~16.4.2}).
\end{proof}

\begin{thm}
  \label{thm:EndAdj}
  Let $\cat M$ be a model category and let $\cat C$ be a
  small category.  If $\diag X$ is a $\cat C$-diagram in $\cat M$,
  $\diXhat$ is a simplicial frame on $\diag X$,
  $\diag K$ is a $\cat C$-diagram of simplicial sets, and $W$ is an
  object of $\cat M$, then there is a natural isomorphism of sets
  \begin{displaymath}
    \cat M\bigl(W, \hom_{\diXhat}^{\cat C}(\diag K, \diag X)\bigr)
    \iso
    \SiS^{\cat C}\bigl(\diag K, \map_{\diXhat}(W,\diag X)\bigr)
  \end{displaymath}
  (where $\map_{\diXhat}(W,\diag X)$ is as in \defref{def:MpSmpSt} and
  $\hom_{\diXhat}^{\cat C}(\diag K, \diag X)$ is the end of the
  functor $\diXhat^{\diag K}\colon \cat C\times \cat C\op \to \cat M$;
  see \cite{MCATL}*{Def.~19.2.2}).
\end{thm}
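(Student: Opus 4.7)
The plan is to chain together \propref{prop:adjointness} with the fact that $\cat M(W,-)$, as a representable functor, preserves limits, and that both sides of the desired isomorphism are built from pointwise data by taking an end in the $\cat C$-direction.

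First I would unfold the right-hand side. A map $\diag K \to \map_{\diXhat}(W,\diag X)$ in $\SiS^{\cat C}$ is a collection of maps of simplicial sets $\diag K_\alpha \to \map_{\diXhat}(W,\diag X_\alpha)$, one for each object $\alpha$ of $\cat C$, that are compatible with every morphism $\alpha \to \beta$ of $\cat C$. Equivalently, this is the equalizer
\begin{displaymath}
  \SiS^{\cat C}\bigl(\diag K, \map_{\diXhat}(W,\diag X)\bigr)
  \iso \mathrm{eq}\Bigl(
    \prod_{\alpha} \SiS\bigl(\diag K_\alpha, \map_{\diXhat}(W,\diag X_\alpha)\bigr)
    \rightrightarrows
    \prod_{\alpha \to \beta} \SiS\bigl(\diag K_\alpha, \map_{\diXhat}(W,\diag X_\beta)\bigr)
  \Bigr).
\end{displaymath}
Next I would unfold the left-hand side. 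The end $\hom_{\diXhat}^{\cat C}(\diag K, \diag X)$ is by definition the equalizer in $\cat M$ of the two natural maps
\begin{displaymath}
  \prod_{\alpha} \diXhat_\alpha^{\diag K_\alpha}
  \rightrightarrows
  \prod_{\alpha \to \beta} \diXhat_\alpha^{\diag K_\beta},
\end{displaymath}
and since $\cat M(W,-)$ preserves all limits, applying it to the end yields the analogous equalizer in sets with each factor replaced by $\cat M(W, \diXhat_\alpha^{\diag K_\alpha})$ and $\cat M(W, \diXhat_\alpha^{\diag K_\beta})$.

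The key step is then to apply \propref{prop:adjointness} factor by factor: for each object $\alpha$ and each morphism $\alpha \to \beta$ of $\cat C$ we get natural bijections
\begin{displaymath}
  \cat M\bigl(W, \diXhat_\alpha^{\diag K_\alpha}\bigr) \iso
  \SiS\bigl(\diag K_\alpha, \map_{\diXhat}(W,\diag X_\alpha)\bigr)
  \quad\text{and}\quad
  \cat M\bigl(W, \diXhat_\alpha^{\diag K_\beta}\bigr) \iso
  \SiS\bigl(\diag K_\beta, \map_{\diXhat}(W,\diag X_\alpha)\bigr).
\end{displaymath}
Taking products and comparing the two parallel arrows in each equalizer diagram identifies the equalizer computing $\cat M(W, \hom_{\diXhat}^{\cat C}(\diag K, \diag X))$ with the equalizer computing $\SiS^{\cat C}(\diag K, \map_{\diXhat}(W, \diag X))$.

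The main obstacle is purely bookkeeping: verifying that the two pairs of parallel maps match under the pointwise bijection of \propref{prop:adjointness}, i.e., that the naturality in simplicial operators used in \propref{prop:adjointness} combines correctly with naturality in $\cat C$ for both the $\cat C$-action on $\diXhat$ and the $\cat C$-action on $\diag K$. Once this compatibility is checked on an object-by-object and morphism-by-morphism basis, the isomorphism is induced automatically by the universal property of equalizers, and naturality in $W$, $\diag X$, and $\diag K$ follows from the naturality of \propref{prop:adjointness}.
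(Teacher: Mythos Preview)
Your approach is essentially identical to the paper's: express $\hom_{\diXhat}^{\cat C}(\diag K,\diag X)$ as the equalizer defining the end, apply the limit-preserving functor $\cat M(W,-)$, invoke \propref{prop:adjointness} factorwise, and recognize the result as the equalizer description of $\SiS^{\cat C}\bigl(\diag K,\map_{\diXhat}(W,\diag X)\bigr)$.

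One indexing slip to fix: in your equalizer for the end, the target product should be $\prod_{\sigma\colon \alpha\to\beta}\diXhat_{\beta}^{\,\diag K_{\alpha}}$, not $\prod_{\alpha\to\beta}\diXhat_{\alpha}^{\,\diag K_{\beta}}$. Indeed, for $\sigma\colon\alpha\to\beta$ the two structure maps send $\diXhat_{\alpha}^{\,\diag K_{\alpha}}$ and $\diXhat_{\beta}^{\,\diag K_{\beta}}$ into $\diXhat_{\beta}^{\,\diag K_{\alpha}}$ (covariance in the frame, contravariance in the exponent). Your own equalizer for the right-hand side, with target $\prod_{\alpha\to\beta}\SiS\bigl(\diag K_{\alpha},\map_{\diXhat}(W,\diag X_{\beta})\bigr)$, is correct, and under \propref{prop:adjointness} this corresponds to $\prod_{\alpha\to\beta}\cat M\bigl(W,\diXhat_{\beta}^{\,\diag K_{\alpha}}\bigr)$, not to the product you wrote. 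With that correction the two equalizer diagrams match and the argument goes through exactly as in the paper.
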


\begin{proof}
  The object $\hom_{\diXhat}^{\cat C}(\diag K, \diag X)$ is defined
  (see \cite{MCATL}*{Def.~19.2.2}) as the limit of the diagram
  \begin{displaymath}
    \xymatrix{
      {\prod_{\alpha\in\Ob(\cat C)}
        (\diXhat_\alpha)^{\diag K_{\alpha}}\quad}
      \ar[r]<.8ex>^-{\phi} \ar[r]<-.6ex>_-{\psi}
      &{\quad\prod_{(\sigma\colon \alpha \to \alpha') \in \cat C}
        (\diXhat_{\alpha'})^{\diag K_{\alpha}}}
    }
  \end{displaymath}
  where the projection of the map $\phi$ on the factor indexed by
  $\sigma\colon \alpha \to \alpha'$ is the composition of a natural
  projection from the product with the map
  \begin{displaymath}
    \sigma_{*}^{1_{\diag K_{\alpha}}}\colon
    (\diXhat_{\alpha})^{\diag K_{\alpha}}
    \longrightarrow
    (\diXhat_{\alpha'})^{\diag K_{\alpha}}    
  \end{displaymath}
  (where $\sigma_{*}\colon \diXhat_{\alpha} \to \diXhat_{\alpha'}$)
  and the projection of the map $\psi$ on the factor indexed by
  $\sigma\colon\alpha \to \alpha'$ is the composition of a natural
  projection from the product with the map
  \begin{displaymath}
    (1_{\diXhat_{\alpha'}})^{\diag K_{\sigma_{*}}}\colon
    (\diXhat_{\alpha'})^{\diag K_{\alpha'}}
    \longrightarrow
    (\diXhat_{\alpha'})^{\diag K_{\alpha}}    
  \end{displaymath}
  (where $\sigma_{*}\colon \diag K_{\alpha} \to \diag K_{\alpha'}$),
  and so $\cat M\bigl(W, \hom_{\diXhat}^{\cat C}(\diag K, \diag
  X)\bigr)$ is naturally isomorphic to the limit of the diagram
  \begin{displaymath}
    \xymatrix{
      {\prod_{\alpha\in\Ob(\cat C)}
        \cat M\bigl(W,(\diXhat_\alpha)^{\diag K_{\alpha}}\bigr)\quad}
      \ar[r]<.8ex>^-{\phi_{*}} \ar[r]<-.6ex>_-{\psi_{*}}
      &{\quad\prod_{(\sigma\colon \alpha \to \alpha') \in \cat C}
        \cat M\bigl(W,(\diXhat_{\alpha'})^{\diag K_{\alpha}}\bigr)\Period}
    }
  \end{displaymath}
  This is naturally isomorphic to the limit of the diagram
  \begin{displaymath}
    \xymatrix{
      {\prod_{\alpha\in\Ob(\cat C)}
        \SiS\bigl(\diag K_{\alpha},
        \map_{\diXhat}(W, \diXhat_{\alpha})\bigr) \quad}
      \ar[r]<.8ex>^-{\hat\phi_{*}} \ar[r]<-.6ex>_-{\hat\psi_{*}}
      &{\quad\prod_{(\sigma\colon \alpha \to \alpha') \in \cat C}
        \SiS\bigl(\diag K_{\alpha},
        \map_{\diXhat}(W, \diXhat_{\alpha'})\bigr)}
    }
  \end{displaymath}
  (see \propref{prop:adjointness}) which is the definition of
  $\SiS^{\cat C}\bigl(\diag K, \map_{\diXhat}(W,\diag X)\bigr)$.
\end{proof}

\section{Total objects}
\label{sec:TotOb}

We define the total object of a cosimplicial object in
\defref{def:Tot}, the total object of a multicosimplicial object in
\defref{def:nTot}, and show in \thmref{thm:IsoTot} that the total
object of a multicosimplicial object is isomorphic to the total
object of its diagonal cosimplicial object.

\subsection{The total object of a cosimplicial object}

\begin{defn}
  \label{def:Tot}
  If $\cat M$ is a model category, $\diag X$ is a cosimplicial object
  in $\cat M$, and $\diXhat$ is a Reedy simplicial frame on $\diag X$
  (see \defref{def:frame}), then the \emph{total object} $\Tot\diag X$
  of $\diag X$ is the object of $\cat M$ that is the end (see
  \cite{MCATL}*{Def.~18.3.2} or \cite{McL:categories}*{pages 218--223}
  or \cite{borceux-II}*{page 329}) of the functor
  $\diXhat^{\Delta}\colon \bD\times \bD\op \to \cat M$.  This is a
  subobject of the product
  \begin{displaymath}
    \prod_{k \ge 0} (\diXhat^{k})^{\Delta[k]}
  \end{displaymath}
  and is denoted $\hom_{\diXhat}^{\bD}(\Delta,\diag X)$ in
  \cite{MCATL}*{Def.~19.2.2} and $\int_{[k]} \diag
  (\diXhat^{k})^{\Delta[k]}$ in \cite{McL:categories}*{pages
    218--223}.
\end{defn}

\begin{prop}
  \label{prop:TotUnq}
  If $\cat M$ is a model category, $\diag X$ is a Reedy fibrant
  cosimplicial object in $\cat M$, and $\diXhat$ and $\diXhat'$ are
  two Reedy simplicial frames on $\diag X$, then there is an
  essentially unique zig-zag of weak equivalences connecting
  $\Tot\diag X$ defined using $\diXhat$ and $\Tot\diag X$ defined
  using $\diXhat'$.
\end{prop}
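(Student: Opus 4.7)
The plan is to reduce to a single weak equivalence of Reedy simplicial frames and then invoke the homotopy invariance of the end construction that defines $\Tot$.

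First I would apply \propref{prop:frameunique}(3), which provides an essentially unique zig-zag of weak equivalences of Reedy simplicial frames on $\diag X$ connecting $\diXhat$ and $\diXhat'$. Since the end $\hom^{\bD}(\Delta,\diag X)$ that defines $\Tot\diag X$ is functorial in the frame, such a zig-zag induces a zig-zag of maps between the two candidate total objects. So it suffices to show (a) that a single weak equivalence of Reedy simplicial frames on the Reedy fibrant $\diag X$ induces a weak equivalence of the corresponding total objects, and (b) that essential uniqueness at the level of frames passes to essential uniqueness at the level of totals, which is immediate by functoriality.

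The key step is (a). Because $\diag X$ is Reedy fibrant, any Reedy simplicial frame on $\diag X$ is a Reedy fibrant $\bD$-diagram of simplicial objects in $\cat M$, i.e., Reedy fibrant in $(\cat M^{\bD\op})^{\bD}$ with the appropriate Reedy structure. The cosimplicial standard simplex $\Delta$ is Reedy cofibrant in $\SiS^{\bD}$: this is the $n=1$ instance of \lemref{lem:prodsimpcof}, whose latching map is the boundary inclusion $\partial\Delta[k] \hookrightarrow \Delta[k]$. The end $\hom^{\bD}(-,-)$ is the right adjoint of a two-variable Quillen adjunction with respect to these Reedy model structures, so Ken Brown's lemma---applied in the framing variable, with the Reedy cofibrant $\Delta$ held fixed---sends the given weak equivalence between Reedy fibrant frames to a weak equivalence of ends in $\cat M$.

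The main obstacle is verifying the two-variable Quillen adjunction appealed to above: one must check the appropriate pushout-product condition between the Reedy model structures on $\SiS^{\bD}$ and on the category of $\bD$-diagrams of simplicial objects in $\cat M$ so that the end sends a Reedy cofibrant first argument and a Reedy weak equivalence of Reedy fibrant second arguments to a weak equivalence in $\cat M$. Granting this standard ingredient, concatenating the weak equivalences produced along the zig-zag from \propref{prop:frameunique}(3) yields the required essentially unique zig-zag of weak equivalences between $\Tot\diag X$ computed using $\diXhat$ and $\Tot\diag X$ computed using $\diXhat'$.
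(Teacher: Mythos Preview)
Your proposal is correct and follows essentially the same approach as the paper: the paper's proof simply cites \propref{prop:frameunique} together with \cite{MCATL}*{Cor.~19.7.4}, and your argument unpacks precisely what that corollary says---that for a Reedy cofibrant $\Delta$ and a weak equivalence between Reedy fibrant frames, the induced map of ends $\hom^{\bD}_{\diXhat}(\Delta,\diag X) \to \hom^{\bD}_{\diXhat'}(\Delta,\diag X)$ is a weak equivalence. The ``main obstacle'' you flag (the two-variable Quillen adjunction for the end) is exactly the content of the cited corollary, so there is no genuine gap; you have simply spelled out what the paper leaves to a reference.
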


\begin{proof}
  This follows from \propref{prop:frameunique} and
  \cite{MCATL}*{Cor.~19.7.4}.
\end{proof}

\begin{ex}
  \label{ex:Tot}
  If $\cat M$ is the category of simplicial sets, $\diag X$ is a
  cosimplicial object in $\cat M$, and $\diXhat$ is the standard
  simplicial frame on $\diag X$, then $\Tot\diag X$ is the simplicial
  set of maps of cosimplicial simplicial sets from $\Delta$ to $\diag
  X$, i.e., a subset of the product simplicial set $\prod_{k \ge 0}
  (\diag X^{k})^{\Delta[k]}$.

  If $\cat M$ is the category of topological spaces, $\diag X$ is a
  cosimplicial object in $\cat M$, and $\diXhat$ is the standard
  simplicial frame on $\diag X$, then $\Tot\diag X$ is the topological
  space of maps of cosimplicial spaces from $\real{\Delta}$ to $\diag
  X$, i.e., a subset of the product space $\prod_{k \ge 0} (\diag
  X^{k})^{\real{\Delta[k]}}$.
\end{ex}

\begin{prop}
  \label{prop:WEtot}
  If $\cat M$ is a model category and $\diag X$ is a Reedy fibrant
  cosimplicial object in $\cat M$, and $\diXhat$ is a Reedy simplicial
  frame on $\diXhat$, then $\Tot\diag X$ is a fibrant object of $\cat
  M$.
\end{prop}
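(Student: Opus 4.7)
The plan is to identify $\Tot\diag X$ with the end $\hom_{\diXhat}^{\bD}(\Delta, \diag X)$ and then exhibit it as fibrant by checking the right lifting property against trivial cofibrations, converted into a Reedy lifting problem for cosimplicial simplicial sets via the adjointness of \thmref{thm:EndAdj}.

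First I would unwind the definition: $\Tot\diag X = \hom_{\diXhat}^{\bD}(\Delta,\diag X)$ by \defref{def:Tot}. Next I would record two Reedy observations. The cosimplicial standard simplex $\Delta$ is Reedy cofibrant in $\SiS^{\bD}$, since its latching map at $[k]$ is the boundary inclusion $\bdry\Delta[k]\to\Delta[k]$ (this is the $n=1$ case of \lemref{lem:prodsimpcof}). And because $\diXhat$ is a Reedy simplicial frame on the Reedy fibrant $\diag X$, the cosimplicial simplicial object $\diXhat$ is Reedy fibrant in $(\cat M^{\bD\op})^{\bD}$, so for every cofibration $A\to B$ of $\cat M$ the induced map $\map_{\diXhat}(B,\diag X)\to \map_{\diXhat}(A,\diag X)$ is a Reedy fibration in $\SiS^{\bD}$, and it is a Reedy trivial fibration if $A\to B$ is also a weak equivalence. (This is exactly the $\cat M$-enriched pushout-product/SM7 property for Reedy simplicial frames recorded in \cite{MCATL}*{Ch.~16--17}.)

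Then to show $\Tot\diag X\to *$ is a fibration I would take an arbitrary trivial cofibration $A\to B$ in $\cat M$ together with a map $A\to \Tot\diag X$ and produce a lift $B\to \Tot\diag X$. Applying \thmref{thm:EndAdj} to $\diag K=\Delta$, the given map $A\to \Tot\diag X$ corresponds to a map of cosimplicial simplicial sets $\Delta\to \map_{\diXhat}(A,\diag X)$, and lifts $B\to \Tot\diag X$ correspond bijectively to lifts in
\begin{displaymath}
  \xymatrix{
    {\emptyset} \ar[r] \ar[d]
    & {\map_{\diXhat}(B,\diag X)} \ar[d]\\
    {\Delta} \ar[r] \ar@{..>}[ur]
    & {\map_{\diXhat}(A,\diag X)}
  }
\end{displaymath}
in $\SiS^{\bD}$. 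By the previous paragraph the right-hand vertical map is a Reedy trivial fibration, and $\Delta$ is Reedy cofibrant, so the lift exists by the Reedy model structure on $\SiS^{\bD}$ (see \cite{MCATL}*{Thm.~15.3.4}). Translating back gives the required map $B\to\Tot\diag X$, so $\Tot\diag X\to *$ has the right lifting property with respect to every trivial cofibration and is therefore a fibration.

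The only substantive step is the SM7-style claim that $\map_{\diXhat}(-,\diag X)$ turns (trivial) cofibrations in $\cat M$ into Reedy (trivial) fibrations in $\SiS^{\bD}$; once that is in hand, the argument is a formal adjunction-plus-Reedy-lifting argument. This claim is, however, a standard consequence of $\diXhat$ being a Reedy simplicial frame on a Reedy fibrant diagram, so I would simply cite the relevant results from \cite{MCATL}*{Ch.~16--17} rather than rederive them.
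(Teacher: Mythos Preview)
Your argument is correct. The paper's own proof is nothing more than a citation to \cite{MCATL}*{Thm.~19.8.2}, so there is no independent argument to compare against; what you have written is essentially an unpacking of how that cited result is proved (compare the analogous \propref{prop:WEntot}, whose proof invokes \cite{MCATL}*{Cor.~19.7.3} after noting that $\Delta^{(n)}$ is Reedy cofibrant). Your use of \thmref{thm:EndAdj} to convert the lifting problem into a Reedy lifting problem against $\emptyset \hookrightarrow \Delta$, together with the observation that $\map_{\diXhat}(B,\diag X)\to\map_{\diXhat}(A,\diag X)$ is a Reedy trivial fibration when $A\to B$ is a trivial cofibration, is exactly the two-variable adjunction argument underlying the cited theorem. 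The only place to be careful is the SM7-type claim, but as you note, it reduces to the fact that a Reedy simplicial frame on a Reedy fibrant $\bD$-diagram is Reedy fibrant in $\cat M^{\bD\times\bD\op}$, so that each bidegree matching map is a fibration in~$\cat M$; this is indeed recorded in \cite{MCATL}.
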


\begin{proof}
  See \cite{MCATL}*{Thm.~19.8.2}.
\end{proof}

\subsection{The total object of a multicosimplicial object}

\begin{defn}
  \label{def:nTot}
  If $n$ is a positive integer, $\cat M$ is a model category, $\diag
  X$ is an $n$-cosimplicial object in $\cat M$, and $\diXhat$ is a
  Reedy simplicial frame on $\diag X$ (see \defref{def:frame}), then
  the \emph{total object} $\Tot\diag X$ of $\diag X$ is the object of
  $\cat M$ that is the end (see \cite{MCATL}*{Def.~18.3.2},
  \cite{McL:categories}*{pages 218--223}, or \cite{borceux-II}*{page
    329}) of the functor $\diXhat^{\Delta^{(n)}}\colon \bD^{n}\times
  (\bD^{n})\op \to \cat M$.  This is a subobject of the product
  \begin{displaymath}
    \prod_{k_{1}\ge 0, k_{2}\ge 0,\ldots, k_{n}\ge 0}
    \bigl(\diXhat^{(k_{1},k_{2},\ldots,
      k_{n})}\bigr)^{(\Delta[k_{1}] \times \Delta[k_{2}] \times \cdots
      \times \Delta[k_{n}])}
  \end{displaymath}
  and is denoted $\hom_{\diXhat}^{\bD^{n}}(\Delta^{(n)},\diag X)$ in
  \cite{MCATL}*{Def.~19.2.2} and
  \begin{displaymath}
    \int_{([k_{1}], [k_{2}],\ldots,[k_{n}])}
    \bigl(\diXhat^{(k_{1},k_{2},\ldots,
    k_{n})}\bigr)^{(\Delta[k_{1}] \times \Delta[k_{2}] \times \cdots
    \times \Delta[k_{n}])}
  \end{displaymath}
  in \cite{McL:categories}*{pages 218--223}.
\end{defn}

\begin{prop}
  \label{prop:MTotUnq}
  If $\cat M$ is a model category, $\diag X$ is a Reedy fibrant
  multicosimplicial object in $\cat M$, and $\diXhat$ and $\diXhat'$
  are two Reedy simplicial frames on $\diag X$, then there is an
  essentially unique zig-zag of weak equivalences connecting
  $\Tot\diag X$ defined using $\diXhat$ and $\Tot\diag X$ defined
  using $\diXhat'$.
\end{prop}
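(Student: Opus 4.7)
The plan is to mirror the proof of \propref{prop:TotUnq}, substituting $\bD^{n}$ for $\bD$ throughout. Since a finite product of Reedy categories is again a Reedy category, the indexing category $\bD^{n}$ is Reedy, so part (3) of \propref{prop:frameunique} applies directly to $\diag X$ as a $\bD^{n}$-diagram: any two Reedy simplicial frames $\diXhat$ and $\diXhat'$ on $\diag X$ are connected by an essentially unique zig-zag of weak equivalences of Reedy simplicial frames on $\diag X$.

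It then remains to argue that each weak equivalence in this zig-zag induces a weak equivalence on total objects. By \defref{def:nTot}, $\Tot\diag X = \hom_{\diXhat}^{\bD^{n}}(\Delta^{(n)}, \diag X)$, so we need a homotopy invariance statement for this hom-end in the frame variable when the source $\Delta^{(n)}$ is Reedy cofibrant (\lemref{lem:prodsimpcof}) and the target $\diag X$ is Reedy fibrant. This is precisely \cite{MCATL}*{Cor.~19.7.4}, applied to the Reedy category $\bD^{n}$ rather than $\bD$: that corollary is formulated for arbitrary Reedy indexing categories, so no new argument is needed. Combining the zig-zag of weak equivalences of frames with this invariance gives a zig-zag of weak equivalences between the two versions of $\Tot\diag X$.

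Essential uniqueness of the resulting zig-zag follows formally from the essential uniqueness supplied by \propref{prop:frameunique}, since the functor $\hom_{(-)}^{\bD^{n}}(\Delta^{(n)}, \diag X)$ is natural in the frame.

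The only point requiring any care is confirming that the hypotheses of \cite{MCATL}*{Cor.~19.7.4} are indeed met on $\bD^{n}$: namely, Reedy cofibrancy of $\Delta^{(n)}$ (handled by \lemref{lem:prodsimpcof}) and Reedy fibrancy of $\diag X$ together with the fact that $\diXhat, \diXhat'$ are Reedy simplicial frames on $\diag X$ (so that the relevant cotensor construction is homotopically well behaved). Neither poses a real obstacle, and the proof reduces to a two-line citation exactly parallel to that of \propref{prop:TotUnq}.
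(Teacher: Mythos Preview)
Your proposal is correct and follows essentially the same approach as the paper: the paper's proof is simply ``This follows from \propref{prop:frameunique} and \cite{MCATL}*{Cor.~19.7.4},'' which is exactly the two-line citation you anticipated. Your additional observation that \lemref{lem:prodsimpcof} supplies the Reedy cofibrancy of $\Delta^{(n)}$ needed to invoke \cite{MCATL}*{Cor.~19.7.4} is a helpful unpacking of what the paper leaves implicit.
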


\begin{proof}
  This follows from \propref{prop:frameunique} and
  \cite{MCATL}*{Cor.~19.7.4}.
\end{proof}

\begin{ex}
  \label{ex:nTot}
  If $n$ is a positive integer, $\cat M$ is the category of simplicial
  sets, $\diag X$ is an $n$-cosimplicial object in $\cat M$, and
  $\diXhat$ is the standard simplicial frame on $\diag X$, then
  $\Tot\diag X$ is the simplicial set of maps of $n$-cosimplicial
  simplicial sets from $\Delta^{(n)}$ to $\diag X$, i.e., a subset of
  the product simplicial set
  \begin{displaymath}
    \prod_{([k_{1}], [k_{2}], \ldots, [k_{n}])}
    (\diag X^{(k_{1},k_{2},\ldots, k_{n})})^{(\Delta[k_{1}]
      \times \Delta[k_{2}] \times \cdots \times \Delta[k_{n}])} \Period
  \end{displaymath}

  If $n$ is a positive integer, $\cat M$ is the category of
  topological spaces, $\diag X$ is an $n$-cosimplicial object in $\cat
  M$, and $\diXhat$ is the standard simplicial frame on $\diag X$,
  then $\Tot\diag X$ is the topological space of maps of
  $n$-cosimplicial spaces from $\real{\Delta^{(n)}}$ to $\diag X$,
  i.e., a subspace of the product space $\prod_{([k_{1}], [k_{2}],
    \ldots, [k_{n}])} (\diag X^{(k_{1},k_{2},\ldots,
    k_{n})})^{\real{(\Delta[k_{1}] \times \Delta[k_{2}] \times \cdots
      \times \Delta[k_{n}])}}$.
\end{ex}

\begin{prop}
  \label{prop:WEntot}
  If $n$ is a positive integer, $\cat M$ is a model category, $\diag
  X$ is a Reedy fibrant $n$-cosimplicial object in $\cat M$, and
  $\diXhat$ is a Reedy simplicial frame on $\diXhat$, then $\Tot\diag
  X$ is a fibrant object of $\cat M$.
\end{prop}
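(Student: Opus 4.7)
The plan is to reduce to the already-established cosimplicial case, \propref{prop:WEtot}, by exhibiting $\Tot\diag X$ as isomorphic to $\Tot(\diagon\diag X)$ where $\diagon\diag X$ carries an induced Reedy simplicial frame. The first step is to show that the restriction $\diagon\diXhat$ of $\diXhat$ along the diagonal embedding $D\colon \bD \to \bD^{n}$ is a Reedy simplicial frame on $\diagon\diag X$. That each $(\diagon\diXhat)_{[k]} = \diXhat_{([k],[k],\ldots,[k])}$ is a simplicial frame on $\diag X^{(k,k,\ldots,k)}$ is immediate from the definitions. For the Reedy fibrancy condition, view $\diXhat$ as an $n$-cosimplicial object in the Reedy model category $\cat M^{\bD\op}$: by hypothesis it is Reedy fibrant there, so \thmref{thm:fibrant} applied inside $\cat M^{\bD\op}$ gives that $\diagon\diXhat$ is Reedy fibrant as a cosimplicial object in $\cat M^{\bD\op}$, which is exactly the Reedy fibrancy condition required of a simplicial frame on $\diagon\diag X$ (and $\diagon\diag X$ itself is Reedy fibrant by \corref{cor:fibrant}).

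Next I would establish the natural isomorphism $\Tot\diag X \iso \Tot(\diagon\diag X)$ by Yoneda. For any object $W$ of $\cat M$, \thmref{thm:EndAdj} identifies $\cat M(W, \Tot\diag X)$ with $\SiS^{\bD^{n}}(\Delta^{(n)}, \map_{\diXhat}(W, \diag X))$. Since $\Delta^{(n)}$ is the left Kan extension of the cosimplicial standard simplex $\Delta$ along the diagonal embedding (\thmref{thm:LeftKan}), this set is naturally isomorphic to $\SiS^{\bD}(\Delta, \diagon\map_{\diXhat}(W, \diag X))$, which by direct inspection of \defref{def:MpSmpSt} equals $\SiS^{\bD}(\Delta, \map_{\diagon\diXhat}(W, \diagon\diag X))$. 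Applying \thmref{thm:EndAdj} a second time, now in the cosimplicial case, identifies this with $\cat M(W, \Tot(\diagon\diag X))$, yielding the desired isomorphism $\Tot\diag X \iso \Tot(\diagon\diag X)$.

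Finally, \propref{prop:WEtot} applied to the Reedy fibrant cosimplicial object $\diagon\diag X$ with its Reedy simplicial frame $\diagon\diXhat$ gives that $\Tot(\diagon\diag X)$ is a fibrant object of $\cat M$, and therefore so is $\Tot\diag X$. The only place where anything beyond the cosimplicial analogue \propref{prop:WEtot} enters is the frame-restriction step in the first paragraph; this is the main point to verify carefully, and it rests on the central \thmref{thm:fibrant} of this paper applied in the auxiliary model category $\cat M^{\bD\op}$.
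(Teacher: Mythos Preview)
Your argument is correct, but it takes a substantially longer route than the paper's. The paper's proof is a single line: since $\Delta^{(n)}$ is Reedy cofibrant (\lemref{lem:prodsimpcof}), the result follows directly from \cite{MCATL}*{Cor.~19.7.3}, which says that $\hom_{\diXhat}^{\cat C}(\diag K,\diag X)$ is fibrant whenever $\diag K$ is Reedy cofibrant and $\diag X$ is Reedy fibrant with a Reedy simplicial frame. No passage to the diagonal is needed.

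Your approach instead reduces to the cosimplicial case by first establishing that $\diagon\diXhat$ is a Reedy simplicial frame on $\diagon\diag X$ and then that $\Tot\diag X \iso \Tot(\diagon\diag X)$; these are precisely \propref{prop:DiagRee} and \thmref{thm:IsoTot}, which in the paper appear \emph{after} \propref{prop:WEntot}. There is no circularity---neither of those later results relies on \propref{prop:WEntot}---so your route is valid, and your justification of the frame-restriction step via \thmref{thm:fibrant} in $\cat M^{\bD\op}$ matches the paper's own proof of \propref{prop:DiagRee}. The trade-off: the paper's proof is immediate and uses only the general Reedy machinery, while your proof leans on the diagonal theory that is the main subject of the paper, effectively previewing \propref{prop:DiagRee} and \thmref{thm:IsoTot} as lemmas.
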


\begin{proof}
  Since $\Delta^{(n)}$ is Reedy cofibrant, this follows from
  \cite{MCATL}*{Cor.~19.7.3}.
\end{proof}

\subsection{The total object of the diagonal}
\label{sec:TotDiag}

In \thmref{thm:IsoTot} we use \thmref{thm:LeftKan} to show that the
total object of an $n$-cosimplicial object in an arbitrary model
category is isomorphic to the total object of its diagonal
cosimplicial object (see also \cite{dwy-mil-neis}*{Remark on p.~172}
and \cite{shipley}*{Prop.~8.1}).

\begin{prop}
  \label{prop:DiagRee}
  If $n$ is a positive integer, $\cat M$ is a model category, $\diag
  X$ is an $n$-cosimplicial object in $\cat M$, and $\diXhat$ is a
  Reedy simplicial frame on $\diag X$, then $\diagon\diXhat$ is a
  Reedy simplicial frame on $\diagon\diag X$.
\end{prop}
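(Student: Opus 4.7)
The plan is to verify the two clauses of \defref{def:frame}(3) defining a Reedy simplicial frame for $\diagon\diXhat$ on $\diagon\diag X$: namely, that (i) $\diagon\diXhat$ is a simplicial frame on $\diagon\diag X$, and (ii) if $\diagon\diag X$ is a Reedy fibrant cosimplicial object in $\cat M$, then $\diagon\diXhat$ is a Reedy fibrant cosimplicial object in $\cat M^{\bD\op}$.

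For clause (i) I would check the conditions pointwise. For each $k \ge 0$ the simplicial object $(\diagon\diXhat)_{[k]} = \widehat{\diag X^{(k,k,\ldots,k)}}$ is, by the hypothesis that $\diXhat$ is a simplicial frame on $\diag X$, a simplicial frame on $\diag X^{(k,k,\ldots,k)} = (\diagon\diag X)^{k}$. The structure map $\cs(\diagon\diag X) \to \diagon\diXhat$ is nothing but the restriction of $\cs\diag X \to \diXhat$ along the diagonal embedding $D\colon \bD \to \bD^{n}$, so at each $[k]$ it is the given frame structure map and is therefore a weak equivalence which is an isomorphism in simplicial degree $0$.

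For clause (ii) my strategy is to apply \thmref{thm:fibrant} inside the model category $\cat M^{\bD\op}$ to the $n$-cosimplicial object $\diXhat$. That would give the conclusion at once \emph{provided} $\diXhat$ is Reedy fibrant as an $n$-cosimplicial object in $\cat M^{\bD\op}$. The Reedy simplicial frame hypothesis on $\diXhat$ supplies this when $\diag X$ itself is Reedy fibrant, but we are only assuming Reedy fibrancy of the diagonal $\diagon\diag X$. To bridge this, I would revisit the decomposition of the matching category $\matchcat{\bD^{n}}{([k],[k],\ldots,[k])}$ used in \secref{sec:Proof}, but now working one ``frame level'' up, in $\cat M^{\bD\op}$. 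The analogues of \propref{prop:isoprime} (cofinality, giving an isomorphism of limits) are purely categorical and transfer verbatim, while the analogue of \propref{prop:IndFibr} reduces --- via the pullback lemma \lemref{lem:reedy} --- to checking that the relevant matching maps of $\diXhat$ at diagonal objects are Reedy fibrations in $\cat M^{\bD\op}$. These are exactly the matching fibrations for the cosimplicial matching map of $\diagon\diag X$ at $[k]$, upgraded to $\cat M^{\bD\op}$ by the frame, so they follow from the hypothesis that $\diagon\diag X$ is Reedy fibrant combined with the Reedy simplicial frame property of $\diXhat$ at those diagonal objects.

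The main obstacle is precisely this last reconciliation: the Reedy simplicial frame property of $\diXhat$ is a global statement about $\diXhat$ being Reedy fibrant when all of $\diag X$ is Reedy fibrant, whereas our hypothesis is diagonal fibrancy only. The resolution, and the crux of the argument, is that the factorization from \secref{sec:Proof} only consumes matching-fibration data at diagonal objects $(k,k,\ldots,k)$, so the frame-level Reedy fibration input needed is exactly what the Reedy simplicial frame structure produces from the diagonal fibrancy of $\diag X$.
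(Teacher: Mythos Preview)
Your argument for clause~(i) and the core strategy for clause~(ii)---apply \thmref{thm:fibrant} (equivalently \corref{cor:fibrant}) inside the model category $\cat M^{\bD\op}$---are exactly the paper's proof. The paper's entire argument is one sentence: a Reedy simplicial frame on a Reedy fibrant $\diag X$ is Reedy fibrant in $(\cat M^{\bD^{n}})^{\bD\op} \iso (\cat M^{\bD\op})^{\bD^{n}}$ (the two Reedy structures coincide by \cite{MCATL}*{Thm.~15.5.2}), so \corref{cor:fibrant} applied in $\cat M^{\bD\op}$ gives that $\diagon\diXhat$ is Reedy fibrant.

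Where you go wrong is in your attempt to handle the case ``$\diagon\diag X$ Reedy fibrant but $\diag X$ not.'' Your claim that the factorization of \secref{sec:Proof} ``only consumes matching-fibration data at diagonal objects $(k,k,\ldots,k)$'' is false. The crucial fibration input occurs in \lemref{lem:lastfib}: the map $\prod_{(\obUC)\in T_{i+1}} \diag X_{\obDn} \to R$ is a product of the relative matching maps of $\diag X \to \diag Y$ at the objects $\obDn$ that are targets of elements of $T_{i+1}$, and those targets are degree-$(i+1)$ objects of $\bD^{n}$ with $\sum p_{j} = i+1$ that in general are \emph{not} diagonal. The argument genuinely consumes Reedy fibrancy at those non-diagonal spots, so your proposed bridge collapses.

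The paper simply does not address the case you worry about; it treats only Reedy fibrant $\diag X$, which is the only situation used downstream (e.g., in \thmref{thm:IsoTot}). Under the literal reading of \defref{def:frame}(3), when $\diag X$ is not Reedy fibrant every simplicial frame on it is vacuously a Reedy simplicial frame, and then there is no constraint tying the frames at different diagonal degrees together---so $\diagon\diXhat$ need not be Reedy fibrant even if $\diagon\diag X$ happens to be. No repair along your lines can succeed there; the right move is the paper's: take the conditional in \defref{def:frame}(3) at face value and only argue under the hypothesis that $\diag X$ is Reedy fibrant.
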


\begin{proof}
  Since a Reedy simplicial frame on a Reedy fibrant $n$-cosimplicial
  object is a Reedy fibrant diagram in $(\cat M^{\bD^{n}})^{\bD\op}
  \iso (\cat M^{\bD\op})^{\bD^{n}}$, and the different Reedy model
  category structures on that category coincide (see
  \cite{MCATL}*{Thm.~15.5.2}), this follows from \corref{cor:fibrant}.
\end{proof}

\begin{thm}
  \label{thm:IsoTot}
  If $n$ is a positive integer, $\cat M$ is a model category, $\diag
  X$ is an $n$-cosimplicial object in $\cat M$, and $\diXhat$ is a
  Reedy simplicial frame on $\diag X$, then $\diagon\diXhat$ is a
  Reedy simplicial frame on $\diagon\diag X$ (see
  \propref{prop:DiagRee}) (which, by abuse of notation, we will also
  denote by $\diXhat$) and there is a natural isomorphism
  \begin{displaymath}
    \Tot\diag X \iso \Tot(\diagon\diag X)
  \end{displaymath}
  from the total object of $\diag X$ to the total object of the
  diagonal cosimplicial object of $\diag X$.
\end{thm}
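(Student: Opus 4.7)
The plan is to test both $\Tot\diag X$ and $\Tot(\diagon\diag X)$ against an arbitrary object $W$ of $\cat M$ and show their $\cat M$-valued functors of points are naturally isomorphic, then invoke the Yoneda lemma. The key input is \thmref{thm:LeftKan}, which identifies $\Delta^{(n)}$ as the left Kan extension of $\Delta$ along the diagonal embedding, but applied not to $\diag X$ itself (which is $\cat M$-valued) but to the $n$-cosimplicial \emph{simplicial set} $\map_{\diXhat}(W,\diag X)$. The bridge from $\cat M$ to $\SiS$ is provided by \thmref{thm:EndAdj}.

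First, \propref{prop:DiagRee} gives that $\diagon\diXhat$ is a Reedy simplicial frame on $\diagon\diag X$, so both total objects are well defined and, by \propref{prop:MTotUnq} and \propref{prop:TotUnq}, do not depend on the choice of frame. Then for any $W\in\cat M$, apply \thmref{thm:EndAdj} once with $\cat C = \bD^{n}$ and $\diag K = \Delta^{(n)}$, and once with $\cat C = \bD$ and $\diag K = \Delta$, yielding natural bijections
\begin{align*}
  \cat M\bigl(W,\Tot\diag X\bigr) &\iso
  \SiS^{\bD^{n}}\bigl(\Delta^{(n)},\map_{\diXhat}(W,\diag X)\bigr),\\
  \cat M\bigl(W,\Tot(\diagon\diag X)\bigr) &\iso
  \SiS^{\bD}\bigl(\Delta,\map_{\diagon\diXhat}(W,\diagon\diag X)\bigr).
\end{align*}
The obvious identification $\map_{\diagon\diXhat}(W,\diagon\diag X) = \diagon\map_{\diXhat}(W,\diag X)$ follows directly from the degreewise definition of $\map_{-}(W,-)$ in \defref{def:MpSmpSt}.

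Now apply \thmref{thm:LeftKan} to the $n$-cosimplicial simplicial set $\map_{\diXhat}(W,\diag X)$: it provides a natural isomorphism
\begin{displaymath}
  \SiS^{\bD^{n}}\bigl(\Delta^{(n)},\map_{\diXhat}(W,\diag X)\bigr)
  \iso
  \SiS^{\bD}\bigl(\Delta,\diagon\map_{\diXhat}(W,\diag X)\bigr).
\end{displaymath}
Stringing the three isomorphisms together gives a natural bijection $\cat M(W,\Tot\diag X) \iso \cat M(W,\Tot(\diagon\diag X))$, which is natural in $W$. The Yoneda lemma then produces the desired natural isomorphism $\Tot\diag X \iso \Tot(\diagon\diag X)$.

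There is no serious obstacle; every step is assembly from previously established tools. The only mildly delicate point is to check that all the identifications are natural in the right variables so that the Yoneda step applies, and in particular that the isomorphism of \thmref{thm:LeftKan} is natural in its $n$-cosimplicial simplicial set argument (which is clear from the adjunction interpretation given there via left Kan extension).
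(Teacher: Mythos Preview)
Your proof is correct and follows essentially the same approach as the paper's own proof: test against an arbitrary $W$, use \thmref{thm:EndAdj} on each side to pass to maps of (multi)cosimplicial simplicial sets, invoke \thmref{thm:LeftKan} for the middle isomorphism, and conclude by Yoneda. Your explicit mention of the identification $\map_{\diagon\diXhat}(W,\diagon\diag X) = \diagon\map_{\diXhat}(W,\diag X)$ makes overt what the paper handles by its announced abuse of notation.
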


\begin{proof}
  For every object $W$ of $\cat M$ there
  are natural isomorphisms of sets
  \begin{align*}
    \cat M(W, \Tot\diag X)
    &= \cat M\bigl(W,
    \hom_{\diXhat}^{\bD^{n}}(\Delta^{(n)},\diag X)\bigr)\\
    &\iso \SiS^{\bD^{n}}\bigl(\Delta^{(n)},
    \map_{\diXhat}(W,\diag X)\bigr)
    &&\text{(see \thmref{thm:EndAdj})}\\
    &\iso \SiS^{\bD}\bigl(\Delta,
    \diagon\map_{\diXhat}(W,\diag X)\bigr)
    &&\text{(see \thmref{thm:LeftKan})}\\
    &\iso \SiS^{\bD}\bigl(\Delta,
    \map_{\diXhat}(W,\diagon\diag X)\bigr)\\
    &\iso \cat M\bigl(W,
    \hom_{\diXhat}^{\bD}(\Delta, \diagon\diag X)\bigr)
    &&\text{(see \thmref{thm:EndAdj})}\\
    &= \cat M\bigl(W, \Tot(\diagon\diag X)\bigr)
  \end{align*}
  and the Yoneda lemma implies that the composition of those is
  induced by a natural isomorphism $\Tot\diag X \iso \Tot(\diagon\diag
  X)$.
\end{proof}

\section{Homotopy limits and total objects}
\label{sec:holim}

We show that the Bousfield-Kan map from the total object of a
multicosimplicial object to its homotopy limit is a weak equivalence
for Reedy fibrant multicosimplicial objects.  We also show that this
behaves well with respect to passing to the diagonal cosimplicial
object of a multicosimplicial object.

\subsection{Cosimplicial objects}

\begin{defn}
  \label{def:BKmapSS}
  The \emph{Bousfield-Kan map} is the map of cosimplicial simplicial
  sets $\phi\colon \B\overcat{\bD}{-} \to \Delta$ that for $k \ge 0$
  and $n \ge 0$ takes the $n$-simplex
  \begin{displaymath}
    \Bigl(\bigl([i_{0}] \xrightarrow{\sigma_{0}} [i_{1}]
    \xrightarrow{\sigma_{1}} \cdots \xrightarrow{\sigma_{n-1}}
    [i_{n}]\bigr), \tau\colon [i_{n}] \to [k] \Bigr)
  \end{displaymath}
  of $\B\overcat{\bD}{[k]}$ to the $n$-simplex
  \begin{displaymath}
    [\tau\sigma_{n-1}\sigma_{n-2}\cdots\sigma_{0}(i_{0}),
    \tau\sigma_{n-1}\sigma_{n-2}\cdots\sigma_{1}(i_{1}),
    \ldots, \tau\sigma_{n-1}(i_{n-1}), \tau(i_{n})]
  \end{displaymath}
  of $\Delta[k]$ (see \cite{MCATL}*{Def.~18.7.1}).  It is a weak
  equivalence of Reedy cofibrant cosimplicial simplicial sets (see
  \cite{MCATL}*{Prop.~18.7.2}).
\end{defn}

\begin{thm}
  \label{thm:HolimTot}
  If $\cat M$ is a simplicial model category, $\diag X$ is a Reedy
  fibrant cosimplicial object in $\cat M$, and $\diXhat$ is a Reedy
  simplicial frame on $\diag X$, then the map
  \begin{displaymath}
    \Tot\diag X \iso
    \hom_{\diXhat}^{\bD}(\Delta,\diag X)
    \xrightarrow{\hom_{\diXhat}^{\bD}(\phi,1_{\diag X})}
    \hom_{\diXhat}^{\bD}\bigl(\B\overcat{\bD}{-},\diag X\bigr)
    \iso \holim\diag X
  \end{displaymath}
  (where $\phi\colon \B\overcat{\bD}{-} \to \Delta$ is the
  Bousfield-Kan map of cosimplicial simplicial sets; see
  \defref{def:BKmapSS}) is a natural weak equivalence of fibrant
  objects $\Tot\diag X \we \holim\diag X$, which we will also call the
  \emph{Bousfield-Kan map}.
\end{thm}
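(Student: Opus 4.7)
The plan is to obtain the map in question by applying the end pairing $\hom^{\bD}_{\diXhat}(-,\diag X)$ to the Bousfield-Kan map of cosimplicial simplicial sets $\phi\colon \B\overcat{\bD}{-} \to \Delta$, and then to invoke the two-variable Quillen property of this pairing relative to the Reedy model structures. The two key inputs are already in hand: \defref{def:BKmapSS} records that $\phi$ is a weak equivalence between Reedy cofibrant cosimplicial simplicial sets, and our hypothesis supplies a Reedy fibrant $\diag X$ together with a Reedy simplicial frame $\diXhat$.

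Concretely I would proceed in three steps. First, identify the left-hand object with $\hom^{\bD}_{\diXhat}(\Delta,\diag X)$ by \defref{def:Tot} and the right-hand object with $\hom^{\bD}_{\diXhat}(\B\overcat{\bD}{-},\diag X)$ by the Bousfield-Kan definition of $\holim$ for framed model categories (as in \cite{MCATL}*{Ch.~19}); in the simplicial case this recovers the classical formula. Second, note that both sides are fibrant in $\cat M$: for $\Tot\diag X$ this is \propref{prop:WEtot}, while for $\holim\diag X$ it follows from the same general principle that $\hom^{\bD}_{\diXhat}(K,\diag X)$ is fibrant whenever $K$ is a Reedy cofibrant cosimplicial simplicial set and $\diag X$ is Reedy fibrant. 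Third, apply the Reedy bifunctor result — essentially \cite{MCATL}*{Cor.~19.7.4}, already invoked in the proof of \propref{prop:TotUnq} — which says that the end pairing $\hom^{\bD}_{\diXhat}(-,\diag X)$ takes weak equivalences between Reedy cofibrant objects to weak equivalences between fibrant objects in $\cat M$. Applied to $\phi$, this gives exactly the weak equivalence $\hom^{\bD}_{\diXhat}(\phi,1_{\diag X})\colon \Tot\diag X \we \holim\diag X$, and naturality is automatic since every step is natural in $(\diag X,\diXhat)$.

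The main obstacle is essentially bookkeeping: citing the precise two-variable Reedy version of Ken Brown's lemma from \cite{MCATL} in exactly the form needed, and unwinding notation so that the two identifications on each side match the standard names $\Tot\diag X$ and $\holim\diag X$. Beyond that there is no new content; the theorem is a direct application of the framework already assembled in \secref{sec:frames} and \secref{sec:TotOb} together with the Reedy cofibrancy of $\B\overcat{\bD}{-}$.
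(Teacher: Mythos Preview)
Your approach is correct and matches the paper's own proof, which simply notes that $\phi$ is a weak equivalence of Reedy cofibrant cosimplicial simplicial sets and invokes \cite{MCATL}*{Cor.~19.7.5} (you cite the neighboring Cor.~19.7.4, but the relevant two-variable Reedy input is the same). Your additional remarks on fibrancy of both sides just unpack what is implicit in that citation.
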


\begin{proof}
  Since the Bousfield-Kan map of cosimplicial simplicial sets is a
  weak equivalence of Reedy cofibrant cosimplicial sets, this follows
  from \cite{MCATL}*{Cor.~19.7.5}.
\end{proof}

\subsection{Multicosimplicial objects}

\begin{lem}
  \label{lem:OvCtProd}
  Let $n$ be a positive integer, let $\cat C_{i}$ be a small category
  for $1 \le i \le n$, and let $\cat C = \prod_{1 \le i \le n} \cat
  C_{i}$.  If $\alpha = (\alpha_{1}, \alpha_{2}, \ldots, \alpha_{n})$
  is an object of $\cat C$, then the overcategory $\overcat{\cat
    C}{\alpha} \iso \prod_{1 \le i \le n} \overcat{\cat
    C_{i}}{\alpha_{i}}$, and its classifying space (or nerve) is
  $\B\overcat{\cat C}{\alpha} \iso \prod_{1 \le i \le n}
  \B\overcat{\cat C_{i}}{\alpha_{i}}$.
\end{lem}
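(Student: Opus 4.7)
The plan is to verify both isomorphisms separately, starting with the categorical isomorphism and then deducing the nerve statement from the fact that the nerve functor preserves products.

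For the first assertion, I would unpack the definition of the overcategory. An object of $\overcat{\cat C}{\alpha}$ is a morphism $f\colon \beta \to \alpha$ in $\cat C$. Since $\cat C = \prod_{1 \le i \le n} \cat C_i$, the object $\beta$ is an $n$-tuple $(\beta_1, \beta_2, \ldots, \beta_n)$ and the morphism $f$ is an $n$-tuple $(f_1, \ldots, f_n)$ with $f_i\colon \beta_i \to \alpha_i$; this is precisely an $n$-tuple of objects of $\overcat{\cat C_i}{\alpha_i}$. A morphism in $\overcat{\cat C}{\alpha}$ from $f\colon \beta \to \alpha$ to $f'\colon \beta' \to \alpha$ is a morphism $g\colon \beta \to \beta'$ in $\cat C$ with $f'g = f$; decomposing coordinatewise gives an $n$-tuple of morphisms $g_i\colon \beta_i \to \beta'_i$ in $\cat C_i$ with $f'_i g_i = f_i$, i.e., an $n$-tuple of morphisms in the respective overcategories. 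This correspondence is clearly compatible with identities and composition, yielding the isomorphism $\overcat{\cat C}{\alpha} \iso \prod_{1 \le i \le n} \overcat{\cat C_i}{\alpha_i}$.

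For the second assertion, I would invoke the fact that the nerve functor $\B\colon \mathrm{Cat} \to \SiS$ preserves finite products. This is a general, well-known fact: a $k$-simplex of $\B\cat D$ is a chain of $k$ composable morphisms in $\cat D$, and in a product category $\prod_i \cat D_i$ such a chain is the same data as an $n$-tuple of chains of length $k$ in each $\cat D_i$, which is a $k$-simplex of $\prod_i \B\cat D_i$. This identification is natural in the face and degeneracy operators, so $\B(\prod_i \cat D_i) \iso \prod_i \B\cat D_i$ as simplicial sets (and hence also after geometric realization, since realization preserves finite products when using a suitable topology). Applying this with $\cat D_i = \overcat{\cat C_i}{\alpha_i}$ and using the first isomorphism gives $\B\overcat{\cat C}{\alpha} \iso \B\bigl(\prod_i \overcat{\cat C_i}{\alpha_i}\bigr) \iso \prod_i \B\overcat{\cat C_i}{\alpha_i}$.

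There is no real obstacle here; this is a routine verification, and the only care needed is to note that the product decomposition of an object or morphism in a product category lifts to a product decomposition in the overcategory, which follows immediately from the componentwise nature of composition and identities in $\prod_i \cat C_i$.
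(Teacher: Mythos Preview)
Your argument is correct and is essentially the same approach as the paper's: the paper simply records that this ``follows directly from the definitions,'' and what you have written is precisely the routine unpacking of those definitions.
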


\begin{proof}
  This follows directly from the definitions.
\end{proof}

\begin{defn}
  \label{def:multBKmapSS}
  If $n$ is a positive integer, then the \emph{product Bousfield-Kan
    map} of $n$-cosimplicial simplicial sets $\phi^{n}\colon
  \B\overcat{\bD^{n}}{-} \to \Delta^{(n)}$ is the composition
  \begin{displaymath}
    \B\overcat{\bD^{n}}{-} \iso
    \prod_{1 \le i \le n} \B\overcat{\bD}{-}
    \xrightarrow{\quad\phi^{n}\quad}
    \Delta^{(n)}
  \end{displaymath}
  (see \lemref{lem:OvCtProd}) where $\phi$ is the Bousfield-Kan map of
  cosimplicial simplicial sets (see \defref{def:BKmapSS}).
\end{defn}

\begin{thm}
  \label{thm:multiHolimTot}
  If $n$ is a positive integer, $\cat M$ is a simplicial model
  category, $\diag X$ is a Reedy fibrant $n$-cosimplicial object in
  $\cat M$, and $\diXhat$ is a Reedy simplicial frame on $\diag X$,
  then the map
  \begin{displaymath}
    \Tot\diag X \iso
    \hom_{\diXhat}^{\bD^{n}}(\Delta^{(n)},\diag X)
    \xrightarrow{\hom_{\diXhat}^{\bD^{n}}(\phi^{n},1_{\diag X})}
    \hom_{\diXhat}^{\bD^{n}}\bigl(\B\overcat{\bD^{n}}{-},\diag X\bigr)
    \iso \holim\diag X
  \end{displaymath}
  (where $\phi^{n}\colon \B\overcat{\bD^{n}}{-} \to \Delta^{(n)}$ is
  the product Bousfield-Kan map of $n$-cosimplicial simplicial sets;
  see \defref{def:multBKmapSS}) is a natural weak equivalence of
  fibrant objects $\Tot\diag X \we \holim\diag X$, which we will also
  call the \emph{product Bousfield-Kan map}.
\end{thm}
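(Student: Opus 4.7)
The plan is to parallel the proof of \thmref{thm:HolimTot}, adapted to the $n$-cosimplicial setting. The core task is to verify that the product Bousfield-Kan map $\phi^{n}\colon \B\overcat{\bD^{n}}{-} \to \Delta^{(n)}$ is a weak equivalence of Reedy cofibrant $n$-cosimplicial simplicial sets; once that is in hand, the conclusion follows from the general principle behind \cite{MCATL}*{Cor.~19.7.5}, applied to the product Reedy structure on $\bD^{n}$ (see \cite{MCATL}*{Prop.~15.1.6 and Thm.~15.5.2}).

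First I would check Reedy cofibrancy of both sides. For $\Delta^{(n)}$, this is \lemref{lem:prodsimpcof}. For $\B\overcat{\bD^{n}}{-}$, \lemref{lem:OvCtProd} identifies it, objectwise, with the external $n$-fold product of the cosimplicial simplicial set $\B\overcat{\bD}{-}$; since each such factor is Reedy cofibrant (see \cite{MCATL}*{Prop.~18.7.2}), the latching map of the external product at $\obDn$ is the iterated pushout-product of the factorwise latching maps, and is therefore a cofibration.

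Second I would check that $\phi^{n}$ is an objectwise weak equivalence. By \defref{def:multBKmapSS}, its value at $\obDn$ is the product of the values of the cosimplicial Bousfield-Kan map at $[p_{1}], [p_{2}], \ldots, [p_{n}]$; each factor is a weak equivalence of simplicial sets by \cite{MCATL}*{Prop.~18.7.2}, and finite products of weak equivalences of simplicial sets are weak equivalences (for example because geometric realization commutes with finite products and takes weak equivalences to weak homotopy equivalences of spaces).

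Finally, the passage from ``weak equivalence of Reedy cofibrant $n$-cosimplicial simplicial sets'' to ``weak equivalence of total objects'' is the $n$-cosimplicial analog of \cite{MCATL}*{Cor.~19.7.5}: for a Reedy fibrant $n$-cosimplicial object $\diag X$ with Reedy simplicial frame $\diXhat$, the functor $\hom_{\diXhat}^{\bD^{n}}(-,\diag X)$ converts weak equivalences between Reedy cofibrant $n$-cosimplicial simplicial sets into weak equivalences in $\cat M$. The main obstacle, and what needs the most care, is justifying this last step: one must view $\diXhat$ as a Reedy fibrant $\bD^{n}$-diagram in $\cat M^{\bD\op}$ and invoke the coincidence of the iterated and product Reedy structures on $(\cat M^{\bD\op})^{\bD^{n}}$ via \cite{MCATL}*{Thm.~15.5.2}, much as in the proof of \propref{prop:DiagRee}, so that the formal Quillen-adjunction argument driving \cite{MCATL}*{Cor.~19.7.5} applies verbatim with $\bD$ replaced by $\bD^{n}$.
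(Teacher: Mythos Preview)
Your proposal is correct and follows the same approach as the paper's proof: verify that $\phi^{n}$ is a weak equivalence of Reedy cofibrant $n$-cosimplicial simplicial sets, then invoke \cite{MCATL}*{Cor.~19.7.5}. The paper's proof is a single sentence that cites only \lemref{lem:prodsimpcof} and \cite{MCATL}*{Cor.~19.7.5}, leaving the Reedy cofibrancy of $\B\overcat{\bD^{n}}{-}$, the objectwise weak equivalence of $\phi^{n}$, and the applicability of \cite{MCATL}*{Cor.~19.7.5} over $\bD^{n}$ implicit; you have simply spelled these points out.
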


\begin{proof}
  Since the product Bousfield-Kan map of $n$-cosimplicial simplicial
  sets is a weak equivalence of Reedy cofibrant $n$-cosimplicial sets
  (see \lemref{lem:prodsimpcof}), this follows from
  \cite{MCATL}*{Cor.~19.7.5}.
\end{proof}

\subsection{The homotopy limit and total object of the diagonal}

We first show that for an objectwise fibrant multicosimplicial object
the canonical map from the homotopy limit to the homotopy limit of the
diagonal cosimplicial object is a weak equivalence, and then we show
that all the maps we've defined between the homotopy limits and total
objects of a multicosimplicial object and its diagonal cosimplicial
object commute.

\begin{prop}
  \label{prop:DiagHocof}
  If $n$ is a positive integer, then the diagonal embedding $D\colon
  \bD \to \bD^{n}$ (see \defref{def:diag}) is homotopy left cofinal
  (see \cite{MCATL}*{Def.~19.6.1}).
\end{prop}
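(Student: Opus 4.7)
The plan is to unwind \cite{MCATL}*{Def.~19.6.1}: we must show that for every object $\bigl([p_{1}], [p_{2}], \ldots, [p_{n}]\bigr)$ of $\bD^{n}$, the classifying space of the overcategory $\bovercat{\bD}{([p_{1}], [p_{2}], \ldots, [p_{n}])}$ (where $\bD$ is viewed as a subcategory of $\bD^{n}$ via $D$) is contractible.

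First, I would reuse the identification that appears in the proof of \thmref{thm:LeftKan}: an object of $\bovercat{\bD}{([p_{1}], [p_{2}], \ldots, [p_{n}])}$ is an $n$-tuple of weakly monotone maps $\alpha_{i}\colon[k] \to [p_{i}]$ sharing a common source $[k]$, which is precisely the data of a $k$-simplex of the product simplicial set $K = \Delta[p_{1}] \times \Delta[p_{2}] \times \cdots \times \Delta[p_{n}]$.  A morphism from $\bigl([k], (\alpha_{i})\bigr)$ to $\bigl([k'], (\alpha'_{i})\bigr)$ in this overcategory is a map $\phi\colon [k] \to [k']$ with $\alpha'_{i}\phi = \alpha_{i}$ for all $i$; comparing with \lemref{lem:ColimCtSmp}, this is exactly a morphism in the category of simplices $\Delta K$.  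Hence $\bovercat{\bD}{([p_{1}], [p_{2}], \ldots, [p_{n}])} \iso \Delta K$.

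I would then invoke the standard fact that for every simplicial set $K$ the classifying space $\B(\Delta K)$ is weakly equivalent to $K$; this generalizes the special case $\B\overcat{\bD}{[k]} \we \Delta[k]$ underlying the Bousfield-Kan map (see \defref{def:BKmapSS} and \cite{MCATL}*{Prop.~18.7.2}) and can be proved via the ``last vertex'' map or by a skeletal induction bootstrapping from the standard-simplex case.  Applied to our $K$, which is contractible because each $\Delta[p_{i}]$ is contractible and finite products of contractible simplicial sets are contractible, this yields that $\B\bovercat{\bD}{([p_{1}], [p_{2}], \ldots, [p_{n}])}$ is contractible, as required.  The main potential obstacle is locating a direct citation for $\B(\Delta K) \we K$ in the generality needed; with that ingredient in hand, the rest of the argument is routine matching of definitions.
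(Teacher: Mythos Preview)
Your proposal is correct and follows essentially the same route as the paper: identify the overcategory with the category of simplices of $\Delta[p_{1}]\times\cdots\times\Delta[p_{n}]$, then use that the nerve of the category of simplices of a simplicial set is weakly equivalent to that simplicial set. The citation you were uncertain about is \cite{MCATL}*{Thm.~18.9.3}, which the paper invokes directly.
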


\begin{proof}
  For an object $([p_{1}], [p_{2}], \ldots, [p_{n}])$ of $\bD^{n}$, an
  object of $\bovercat{\bD}{([p_{1}], [p_{2}], \ldots, [p_{n}])}$ is a
  map
  \begin{displaymath}
    (\alpha_{1}, \alpha_{2}, \ldots, \alpha_{n}) \colon
    \bigl([k], [k], \ldots, [k]\bigr) \longrightarrow
    \bigl([p_{1}], [p_{2}], \ldots, [p_{n}]\bigr)
  \end{displaymath}
  in $\bD^{n}$, where each $\alpha_{i}\colon [k] \to [p_{i}]$ is a map
  in $\bD$, i.e., a $k$-simplex of $\Delta[p_{i}]$.  Thus,
  $(\alpha_{1}, \alpha_{2}, \ldots, \alpha_{n})$ is a $k$-simplex of
  $\Delta[p_{1}]\times \Delta[p_{2}]\times \cdots\times
  \Delta[p_{n}]$, and the category $\bovercat{\bD}{([p_{1}], [p_{2}],
    \ldots, [p_{n}])}$ is the category of simplices of
  $\Delta[p_{1}]\times \Delta[p_{2}]\times \cdots\times \Delta[p_{n}]$
  (see \lemref{lem:ColimCtSmp}).  Since the nerve of the category of
  simplices of a simplicial set is weakly equivalent to that
  simplicial set (see \cite{MCATL}*{Thm.~18.9.3}),
  $\B\bovercat{\bD}{([p_{1}], [p_{2}], \ldots, [p_{n}])}$ is weakly
  equivalent to $\Delta[p_{1}]\times \Delta[p_{2}]\times \cdots\times
  \Delta[p_{n}]$, and is thus contractible.
\end{proof}

\begin{thm}
  \label{thm:WEdiag}
  If $n$ is a positive integer, $\cat M$ is a model category, and
  $\diag X$ is an objectwise fibrant $n$-cosimplicial object in $\cat
  M$, then the natural map $\holim_{\bD^{n}}\diag X \to
  \holim_{\bD}\diagon\diag X$ induced by the diagonal embedding
  $D\colon \bD \to \bD^{n}$ is a weak equivalence.
\end{thm}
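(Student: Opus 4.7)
The plan is to deduce this directly from \propref{prop:DiagHocof} together with the general principle that homotopy left cofinal functors induce weak equivalences on homotopy limits of objectwise fibrant diagrams. Concretely, there is a standard result (see \cite{MCATL}*{Thm.~19.6.7}) stating that if $F\colon \cat A \to \cat B$ is homotopy left cofinal and $\diag X$ is an objectwise fibrant $\cat B$-diagram in a model category $\cat M$, then the natural map
\begin{displaymath}
  \holim_{\cat B} \diag X \longrightarrow \holim_{\cat A} F^{*}\diag X
\end{displaymath}
is a weak equivalence.

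First, I would note that, by the definition of the diagonal (see \defref{def:diag}), the restriction of $\diag X$ along the diagonal embedding $D\colon \bD \to \bD^{n}$ is exactly the diagonal cosimplicial object: $D^{*}\diag X = \diagon\diag X$. Next, since $\diag X$ is objectwise fibrant, so is $D^{*}\diag X = \diagon\diag X$, so the hypotheses of the general cofinality theorem are satisfied. Finally, I would invoke \propref{prop:DiagHocof}, which supplies the homotopy left cofinality of $D\colon \bD \to \bD^{n}$, and conclude that the induced map $\holim_{\bD^{n}}\diag X \to \holim_{\bD}\diagon\diag X$ is a weak equivalence.

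There is essentially no obstacle here: all the substantive content of the theorem has already been isolated in \propref{prop:DiagHocof}, where the contractibility of the over-categories $\bovercat{\bD}{([p_{1}],[p_{2}],\ldots,[p_{n}])}$ was identified with (the nerve of the category of simplices of) the product of standard simplices $\Delta[p_{1}]\times\cdots\times\Delta[p_{n}]$. The only thing to check is the identification $D^{*}\diag X = \diagon\diag X$, which is immediate from the definitions, so the proof reduces to a one-line application of the cited cofinality theorem.
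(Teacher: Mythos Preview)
Your proposal is correct and matches the paper's proof exactly: the paper simply cites \propref{prop:DiagHocof} and \cite{MCATL}*{Thm.~19.6.7}. Your additional remarks about $D^{*}\diag X = \diagon\diag X$ and the objectwise fibrancy of the restriction are accurate and just make explicit what the paper leaves implicit.
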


\begin{proof}
  This follows from \propref{prop:DiagHocof} and
  \cite{MCATL}*{Thm.~19.6.7}.
\end{proof}

\begin{thm}
  \label{thm:holimdiag}
  If $n$ is a positive integer, $\cat M$ is a model category, $\diag
  X$ is an $n$-cosimplicial object in $\cat M$, and $\diXhat$ is a
  Reedy simplicial frame on $\diag X$, then the diagram
  \begin{displaymath}
    \xymatrix{
      {\Tot\diag X} \ar[r] \ar[d]
      & {\holim_{\bD^{n}}\diag X} \ar[d]\\
      {\Tot\diagon\diag X} \ar[r]
      & {\holim_{\bD}\diagon\diag X}
    }
  \end{displaymath}
  (where the upper horizontal map is the product Bousfield-Kan map
  (see \thmref{thm:multiHolimTot}), the lower horizontal map is the
  Bousfield-Kan map (see \thmref{thm:HolimTot}), the left vertical map
  is the isomorphism of \thmref{thm:IsoTot}, and the right vertical
  map is the natural map induced by the diagonal embedding $D\colon
  \bD \to \bD^{n}$ (see \cite{MCATL}*{Prop.~19.1.8})) commutes.  If
  $\diag X$ is objectwise fibrant, then the vertical maps in that
  diagram are weak equivalences.  If $\diag X$ is Reedy fibrant, then
  all of the maps in that diagram are weak equivalences.
\end{thm}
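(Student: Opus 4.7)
The plan is to split the proof in two: first verify that the square commutes for every $\diag X$, then deduce each of the weak equivalence assertions from the earlier theorems.

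For commutativity, I would apply the Yoneda lemma: it suffices to show that $\cat M(W,-)$ applied to the square commutes for every object $W$ of $\cat M$. Combining \thmref{thm:EndAdj} with the adjunction of \thmref{thm:LeftKan} rewrites the four corners as
\begin{align*}
  \cat M(W,\Tot\diag X) &\iso \SiS^{\bD^{n}}\bigl(\Delta^{(n)},\map_{\diXhat}(W,\diag X)\bigr),\\
  \cat M(W,\holim_{\bD^{n}}\diag X) &\iso \SiS^{\bD^{n}}\bigl(\B\overcat{\bD^{n}}{-},\map_{\diXhat}(W,\diag X)\bigr),\\
  \cat M(W,\Tot\diagon\diag X) &\iso \SiS^{\bD}\bigl(\Delta,\diagon\map_{\diXhat}(W,\diag X)\bigr),\\
  \cat M(W,\holim_{\bD}\diagon\diag X) &\iso \SiS^{\bD}\bigl(\B\overcat{\bD}{-},\diagon\map_{\diXhat}(W,\diag X)\bigr).
\end{align*}
Under these identifications, the horizontal maps become precomposition with the Bousfield--Kan maps $\phi^{n}$ and $\phi$; the left vertical map becomes the adjunction bijection of \thmref{thm:LeftKan}, sending $f$ to $\diagon f\circ\alpha$; and the right vertical map becomes the analogous operation induced by the canonical map $D_{*}\colon\B\overcat{\bD}{-}\to\diagon\B\overcat{\bD^{n}}{-}$ coming from $D$ on overcategories. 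By Yoneda, commutativity of the original square is therefore equivalent to commutativity of the square of $\bD$-diagrams of simplicial sets
\begin{displaymath}
  \xymatrix{
    {\B\overcat{\bD}{-}} \ar[r]^-{D_{*}} \ar[d]_{\phi}
    & {\diagon\B\overcat{\bD^{n}}{-}} \ar[d]^{\diagon\phi^{n}}\\
    {\Delta} \ar[r]_-{\alpha}
    & {\diagon\Delta^{(n)}}
  }
\end{displaymath}
where $\alpha$ is the natural transformation of \thmref{thm:LeftKan}. I would check this on simplices: an $m$-simplex $\bigl((\sigma_{0},\ldots,\sigma_{m-1}),\tau\colon[i_{m}]\to[k]\bigr)$ of $\B\overcat{\bD}{[k]}$ traced along either route produces in $\Delta[k]\times\cdots\times\Delta[k]$ the $n$-tuple whose every entry is the $m$-simplex $[\tau\sigma_{m-1}\cdots\sigma_{0}(i_{0}),\ldots,\tau(i_{m})]$ prescribed by \defref{def:BKmapSS}, using that $\phi^{n}$ acts coordinatewise (\defref{def:multBKmapSS}).

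For the weak equivalence statements, the left vertical map is always an isomorphism by \thmref{thm:IsoTot}; if $\diag X$ is objectwise fibrant, then \thmref{thm:WEdiag} shows that the right vertical map is a weak equivalence; and if $\diag X$ is Reedy fibrant, then \corref{cor:fibrant} shows $\diagon\diag X$ is also Reedy fibrant, so (since Reedy fibrant diagrams are objectwise fibrant) the vertical maps are weak equivalences by what we just said, while the top and bottom horizontal Bousfield--Kan maps are weak equivalences by \thmref{thm:multiHolimTot} and \thmref{thm:HolimTot}. The main obstacle is the simplex-level check of the small square of natural transformations; once one observes that $D_{*}$ sends a simplex of $\B\overcat{\bD}{[k]}$ to the diagonal tuple of copies of itself in $\B\overcat{\bD^{n}}{([k],\ldots,[k])}$ and that $\phi^{n}$ is defined coordinatewise, the verification is essentially immediate, after which every other ingredient is quoted from the results already established.
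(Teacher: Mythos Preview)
Your proposal is correct and follows essentially the same route as the paper: reduce commutativity via Yoneda and \thmref{thm:EndAdj} to the commutativity of the small square $\phi^{n}\circ D_{*}=\alpha\circ\phi$ of cosimplicial simplicial sets, and then invoke the earlier results for the weak equivalence claims. You go slightly beyond the paper by spelling out the simplex-level verification of that small square and by making the weak equivalence deductions explicit, but the argument is the same.
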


\begin{proof}
  It is sufficient to show that if $W$ is an object of $\cat M$, then
  the diagram
  \begin{displaymath}
    \xymatrix{
      {\cat M(W, \Tot\diag X)} \ar[r] \ar[d]
      & {\cat M(W, \holim_{\bD^{n}}\diag X)} \ar[d]\\
      {\cat M(W, \Tot\diagon\diag X)} \ar[r]
      & {\cat M(W, \holim_{\bD}\diagon\diag X)}
    }
  \end{displaymath}
  commutes.  \thmref{thm:EndAdj} gives us natural isomorphisms
  \begin{align*}
    \cat M(W, \Tot\diag X)
    &= \cat M\bigl(W, \hom_{\diXhat}^{\bD^{n}}(\Delta^{(n)}, \diag
    X)\bigr)\\
    &\iso \SiS^{\bD^{n}}\bigl(\Delta^{(n)},
    \map_{\diXhat}(W,\diag X)\bigr)\\[1ex]
    \cat M(W, \holim\diag X)
    &= \cat M\bigl(W, \hom_{\diXhat}^{\bD^{n}}(\B\overcat{\bD^{n}}{-},
    \diag X)\bigr)\\
    &\iso \SiS^{\bD^{n}}\bigl(\B\overcat{\bD^{n}}{-},
    \map_{\diXhat}(W,\diag X)\bigr)\\[1ex]
    \cat M(W, \Tot\diagon\diag X)
    &= \cat M\bigl(W, \hom_{\diXhat}^{\bD}(\Delta, \diagon\diag
    X)\bigr)\\
    &\iso \SiS^{\bD}\bigl(\Delta,
    \map_{\diXhat}(W,\diagon\diag X)\bigr)\\[1ex]
    \cat M(W, \holim\diagon\diag X)
    &= \cat M\bigl(W, \hom_{\diXhat}^{\bD}(\B\overcat{\bD}{-},
    \diagon\diag X)\bigr)\\
    &\iso \SiS^{\bD}\bigl(\B\overcat{\bD}{-},
    \map_{\diXhat}(W, \diagon\diag X)\bigr)
  \end{align*}
  and so this is equivalent to showing that the diagram
  \begin{displaymath}
    \xymatrix{
      {\SiS^{\bD^{n}}\bigl(\Delta^{(n)},
        \map_{\diXhat}(W,\diag X)\bigr)}
      \ar[r] \ar[d]
      & {\SiS^{\bD^{n}}\bigl(\B\overcat{\bD^{n}}{-},
        \map_{\diXhat}(W,\diag X)\bigr)}
      \ar[d]\\
      {\SiS^{\bD}\bigl(\Delta, \map_{\diXhat}(W,\diagon\diag X)\bigr)}
      \ar[r]
      & {\SiS^{\bD}\bigl(\B\overcat{\bD}{-},
        \map_{\diXhat}(W,\diagon\diag X)\bigr)}
    }
  \end{displaymath}
  commutes.  If $f \in \SiS^{\bD^{n}}\bigl(\Delta^{(n)},
  \map_{\diXhat}(W, \diag X)\bigr)$, then the image of $f$ under the
  counterclockwise composition is the composition of
  \begin{displaymath}
    \xymatrix@C=3em{
      {\B\overcat{\bD}{-}} \ar[r]^-{\phi}
      & {\Delta} \ar[r]^-{\alpha}
      & {\diagon\bigl(\Delta^{(n)}\bigr)} \ar[r]^-{\diagon f}
      & {\diagon\diag X}
    }
  \end{displaymath}
  (where $\phi$ is the Bousfield-Kan map of \defref{def:BKmapSS} and
  $\alpha$ is as in \thmref{thm:LeftKan}) and the image
  of $f$ under the clockwise composition is
  \begin{displaymath}
    \xymatrix@C=3em{
      {\B\overcat{\bD}{-}} \ar[r]^-{D_{*}}
      & {\diagon\bigl(\B\overcat{\bD^{n}}{-}\bigr)} \ar[r]^-{\phi^{n}}
      & {\diagon(\Delta^{(n)})} \ar[r]^-{\diagon f}
      & {\diagon\diag X}
    }
  \end{displaymath}
  (where $D_{*}$ is the map induced by the diagonal embedding $D\colon
  \bD \to \bD^{n}$ and $\phi^{n}$ is the product Bousfield-Kan map of
  \defref{def:multBKmapSS}).  Since the diagram
  \begin{displaymath}
    \xymatrix@C=3em{
      {\B\overcat{\bD}{-}} \ar[r]^-{D_{*}} \ar[d]_{\phi}
      & {\diagon\B\overcat{\bD^{n}}{-}} \ar[d]^{\diagon \phi^{n}}\\
      {\Delta} \ar[r]_-{\alpha}
      & {\diagon \Delta^{(n)}}
    }
  \end{displaymath}
  commutes, these two compositions are equal, and so our diagram
  commutes.
\end{proof}

\begin{bibdiv} 
  \begin{biblist}

\bib{B-E-J-M}{article}{
   author={Bauer, K.},
   author={Eldred, R.},
   author={Johnson, B.},
   author={McCarthy, R.},
   title={Combinatorial models for Taylor polynomials of functors},
   date={2015},
   eprint={http://arxiv.org/abs/1506.02112}
}

\bib{borceux-I}{book}{
   author={Borceux, Francis},
   title={Handbook of categorical algebra. 1},
   series={Encyclopedia of Mathematics and its Applications},
   volume={50},
   note={Basic category theory},
   publisher={Cambridge University Press, Cambridge},
   date={1994},
   pages={xvi+345},
}

\bib{borceux-II}{book}{
   author={Borceux, Francis},
   title={Handbook of categorical algebra. 2},
   series={Encyclopedia of Mathematics and its Applications},
   volume={51},
   note={Categories and structures},
   publisher={Cambridge University Press, Cambridge},
   date={1994},
   pages={xviii+443},
}

\bib{YM}{book}{
   author={Bousfield, A. K.},
   author={Kan, D. M.},
   title={Homotopy limits, completions and localizations},
   series={Lecture Notes in Mathematics, Vol. 304},
   publisher={Springer-Verlag, Berlin-New York},
   date={1972},
   pages={v+348},
}

\bib{dwy-mil-neis}{article}{
   author={Dwyer, William},
   author={Miller, Haynes},
   author={Neisendorfer, Joseph},
   title={Fibrewise completion and unstable Adams spectral sequences},
   journal={Israel J. Math.},
   volume={66},
   date={1989},
   number={1-3},
   pages={160--178},
}

\bib{MCATL}{book}{
   author={Hirschhorn, Philip S.},
   title={Model categories and their localizations},
   series={Mathematical Surveys and Monographs},
   volume={99},
   publisher={American Mathematical Society, Providence, RI},
   date={2003},
   pages={xvi+457},
}

\bib{McL:categories}{book}{
   author={MacLane, Saunders},
   title={Categories for the working mathematician},
   note={Graduate Texts in Mathematics, Vol. 5},
   publisher={Springer-Verlag, New York-Berlin},
   date={1971},
   pages={ix+262},
}

\bib{schubert}{book}{
   author={Schubert, Horst},
   title={Categories},
   note={Translated from the German by Eva Gray},
   publisher={Springer-Verlag, New York-Heidelberg},
   date={1972},
   pages={xi+385},
}

\bib{shipley}{article}{
   author={Shipley, Brooke E.},
   title={Convergence of the homology spectral sequence of a cosimplicial
   space},
   journal={Amer. J. Math.},
   volume={118},
   date={1996},
   number={1},
   pages={179--207},
}

  \end{biblist}
\end{bibdiv}

\end{document}